\documentclass[12pt,a4paper,reqno,oneside]{amsart}
\usepackage{t1enc}
\usepackage{times}
\usepackage[mathscr]{euscript}
\usepackage{graphicx}

\usepackage{hyperref}

\usepackage{amsmath, amsthm, amsfonts, amssymb}

\usepackage{xcolor}

%%%%%%%%%%%%%%%%%%%%%%%%%%
\textwidth=16.5cm
\textheight=22,5cm
\hoffset=-0.8cm
\voffset=-1.3cm

%%%%%%%%%%%%%%%%%%%%%%%

\theoremstyle{plain}
\newtheorem{thm}{Theorem}[section]
\newtheorem*{thm*}{Theorem}
\newtheorem{lem}{Lemma}[section]

\newtheorem{corl}{Corollary}[section]
\theoremstyle{definition}

\theoremstyle{remark}
\newtheorem{remk}{Remark}[section]

%%%%%%%%%%%%%%%%%
\newcommand{\1}{1\!\!\,{\rm I}}

\newcommand{\wt}{\widetilde}

\newcommand{\be}{\begin{equation}}
\newcommand{\ee}{\end{equation}}
\newcommand{\bel}{\begin{equation}\label}
\newcommand{\ba}{\begin{aligned}}
\newcommand{\ea}{\end{aligned}}

\renewcommand{\lg}{\langle}
\newcommand{\rg}{\rangle}

\newcommand{\ve}{\varepsilon}
\newcommand{\vf}{\varphi}

\newcommand{\mbR}{{\mathbb R}}
\newcommand{\mbN}{{\mathbb N}}

\newcommand{\cF}{{\mathcal F}}

\newcommand{\supp}{\supp{\rm supp}}

\newcommand{\E}{\mathrm{E}}
\newcommand{\Pb}{\mathrm{P}}

\newcommand{\pto}{\stackrel{\mathrm{P}}{\to}}

\begin{document}
\title{On a skew stable L\'{e}vy process}
\author{Alexander Iksanov}
\address{A.Iksanov~~ Taras Shevchenko National University of Kyiv, Ukraine; e-mail: iksan@univ.kiev.ua}
\author{Andrey  Pilipenko}
\address{A. Pilipenko~~ Institute of Mathematics of Ukrainian National Academy of Sciences; Igor Sikorsky Kyiv Polytechnic Institute; e-mail: pilipenko.ay@gmail.com}

\begin{abstract}
The skew Brownian motion is a strong Markov process which behaves like a Brownian motion until hitting zero and exhibits an asymmetry at zero. We address the following question: what is a natural counterpart of the skew
Brownian motion in the situation that the noise is a stable L\'{e}vy process with finite mean and infinite variance. We define a skew stable L\'{e}vy process $X$ as the limit of a sequence of stable
L\'{e}vy processes which are perturbed at zero. We point out a formula for the resolvent of $X$ and show that $X$ is a solution to a stochastic differential equation with a local time. Also, we provide a representation of $X$ in terms of It\^{o}`s excursion theory.
\end{abstract}
\keywords{Excursion theory; functional limit theorem; resolvent; recurrent extension of a Markov process; skew Brownian motion; stable L\'{e}vy process; stochastic differential equation with a local time}
\subjclass[2020]{Primary: 60F17, 60J35; Secondary: 60J50}

\maketitle

\section{Introduction and main results}\label{intro}

A skew Brownian motion (SBM) with parameter $p\in [0,1]$ appears in the book \cite{ItoMcKean65} as a diffusion that behaves like a Brownian motion until hitting $0$ and whose excursions select the positive or negative sign with probabilities $p$ and $1-p$, respectively. There are numerous alternative descriptions and generalizations of the SBM. Portenko constructed an SBM as a solution to a stochastic differential equation (SDE) with a generalized drift or a diffusion with a semipermeable membrane \cite{Portenko90, portenko1974Irregular}. Harrison and Shepp \cite{Harrison+Shepp} proved that an SBM is a strong
solution to an SDE with a local time drift, which can be thought of as Dirac's delta function drift in Portenko's framework. Harrison and Shepp % They
also constructed an SBM as an appropriate limit of random walks perturbed at $0$, see also \cite{IksanovPilipenko2016Perturbed, MinlosZhizhina, NgoPeigne, PilipenkoPrykhodkoUMZh} and references therein. Walsh \cite{Walsh:1978} described an SBM with the help of a  martingale problem, see also \cite{BarlowPitmanYor}. The detailed review on the SBM is presented in \cite{Lejay:skew}.

In this paper we address the following question: what is a natural analogue of the SBM in the case of a stable noise?
For $\alpha\in (1,2)$, denote by $U_\alpha:=(U_\alpha(t))_{t\geq 0}$ a symmetric $\alpha$-stable L\'{e}vy process with characteristic function $$\E\exp({\rm i}zU_\alpha(t))= \exp(-t|z|^\alpha),\quad z\in\mbR,\ t\geq 0.$$
This process hits any point  with probability 1, see, for instance, \cite[p. 63]{Bertoin:1996} or \cite[Example 43.42]{Sato}. We are going to construct a Feller process that behaves like $U_\alpha$ until hitting $0$ and have some ``asymmetry'' at the origin. To understand what could be a natural asymmetry at $0$ we attempt to find analogies with the SBM construction. Note that $U_\alpha$ does not hit $0$ by a single jump and cross the zero level infinitely often before hitting $0$, see \cite[Theorem 6.4]{Watanabe1962}. It does not also exit $0$ by a jump and changes sign infinitely often on exiting $0$ \cite[Theorem 47.1]{Sato}. Thus, unlike in the case of SBM, selecting signs of the excursions becomes an issue.

The transition probability density function of the SBM with parameter $p$ at time $t>0$ is given by
$$(x,y)\mapsto \vf_t^{(2)}(x-y)+(2p-1) {\rm sgn}\, (y)\vf_t^{(2)}(|x|+|y|),\quad x,y\in\mathbb{R},$$ where $\vf_t^{(2)}$ is the density of a centered normal distribution with variance $t$. For $t>0$, denote by $\vf_t^{(\alpha)}$ the density of the random variable $U_\alpha(t)$, $\alpha\in (1,2)$. It turns out that, for $p\in [0,1]$, $$(x,y)\mapsto \vf^{(\alpha)}_t(x-y)+(2p-1){\rm sgn}\, (y)\vf^{(\alpha)}_t(|x|+|y|),\quad x,y\in\mathbb{R}$$ is the transition density of a Markov process, whose martingale characterization is given in \cite{PortenkoSkewLevy}.
Now we provide an informal construction of this process and note that it does not behave like $U_\alpha$ outside $0$.
The jumps of the process are governed by the intensity function proportional to $ |x|^{-(1+\alpha)}$, $x\in\mathbb{R}$. These are accumulated until the process changes sign.  At the epoch of sign change a new sign is selected positive
with probability $p$ and negative with probability $1-p$.

Existence and uniqueness of a strong solution to an SDE with a local time were proved by Harrison and Shepp with the help of Tanaka's formula and Nakao's theorem. This technique fails in the case of an $\alpha$-stable noise, and so do
arguments related to space- and time-change transforms, which are efficient for one-dimensional diffusions with local times, see \cite{LeGall83}. When constructing a diffusion with a generalized drift
Portenko uses a partial differential equations approach. An essential part of his proof is based on the result dealing with a
jump of the normal derivative of a single layer potential. It turns out that a similar result,
in which the derivative has to be replaced with some nonlocal operator, holds true for a potential generated by the process $U_\alpha$. Unfortunately, such an approach leads to strongly continuous semigroups without nonnegativity condition \cite{PortenkoLoebus, PortenkoOsypchuk1994}.

We also mention here several recent results on strong solutions of SDEs with
singular drifts and additive fractional Brownian motion noise having a small
Hurst parameter, see \cite{BanosOrtizProskePilipenko, CatellierGubinelli} and references therein.
These results are derived under the assumption that the noise has a local time,
 which is sufficiently smooth with respect to the spatial parameter.
Observe that the $\alpha$-stable L\'{e}vy process $U_\alpha$ does not enjoy such a property.

To define the asymmetry at $0$ in a natural way, we shall use the approach
of Harrison and Shepp. Specifically, %  Following this scheme, 
we construct perturbations of the $\alpha$-stable process $U_\alpha$ as follows. Let $\zeta_1$, $\zeta_2,\ldots$ be independent copies of a random variable $\zeta$, which are independent of $U_\alpha$. Assume that $\zeta\neq 0$ almost surely (a.s.). With these at hand, we define the process $X_\zeta:=(X_\zeta(t))_{t\geq 0}$ which satisfies $X_\zeta(0)=x\neq 0$ and has the same increments as $U_\alpha$ on the time intervals where $X_\zeta$ does not ''touch'' $0$. Upon the $k$th touch of $0$ the process $X_\zeta$ has jump $\zeta_k$. To make the previous discussion formal, put
\bel{eq:def_of_X}
\ba
\sigma_0=0,\quad \sigma_{k+1}:=\inf\{t>\sigma_k\ : \ \zeta_k+ U_\alpha(t)-U_\alpha(\sigma_k)=0\},\\
X_\zeta(t):=x+ U_\alpha(t),\quad t\in[0, \sigma_{1}),\\
X_\zeta(t):=\zeta_k+ U_\alpha(t)-U_\alpha(\sigma_k),\quad t\in[\sigma_k, \sigma_{k+1}),\quad k\geq 1.
\ea
\ee
Now we want to successively decrease the perturbations $(\zeta_k)_{k\geq 1}$ of $U_\alpha$. To this end, for each positive integer $n$, replace $(\zeta_k)_{k\geq 1}$ with $(\zeta_k/n)_{k\geq 1}$, then define the processes $X_{\zeta/n}$ and send $n\to\infty$. Note that each particular perturbation tends to $0$. Nevertheless, the smaller the jump from $0$ is, the smaller the return time to $0$ is. As a consequence, the number of visits to $0$ increases as $n$ grows.

In this paper we aim at finding a distributional limit (that we denote by $X$) for the sequence $(X_{\zeta/n})_{n\geq 1}$ and investigating its properties. In particular, we shall point out the resolvent, the entrance law, the excursions measure and an SDE that $X$ satisfies.
 Related to our investigation are the papers \cite{LambertSimatos, Yano2008convergence, Yano2015functional}, in which some invariance principles are obtained in terms of convergence of the excursions measures.

Before formulating our results we note that, for $t>0$ and large $n$, the value $X_{\zeta/n}(t)$ is the sum of $U_\alpha(t)$ and a random number (depending on $t$) of small perturbations from the collection $(\zeta_k/n)_{k\geq 1}$. Intuitively, if the distribution of $\zeta$ is light-tailed, the contribution of the perturbations should be negligible. We shall show below that this (trivial) situation occurs whenever $\E|\zeta|<\infty$. Assume now that the opposite situation prevails, which particularly means that $\E|\zeta|=\infty$. Then, to make the sum of perturbations, properly normalized, convergent, it is natural to assume that the distribution of $\zeta$ belongs to the domain of attraction of a $\beta$-stable distribution with $\beta\in (0,1)$ (here, $\beta$ could have been equal to $1$; however, we do not treat this case in the present paper). This means that the variables $\zeta_1+\ldots+\zeta_n$, properly normalized and centered, converge in distribution to a random variable with a $\beta$-stable distribution. It is known that this happens if, and only if, the function $x\mapsto \Pb(|\zeta|>x)$ is regularly varying at $+\infty$ of index $-\beta$ and the limits
\bel{eq:c+-}
c_\pm:=\lim_{x\to +\infty}\frac{\Pb(\pm \zeta >x)}{\Pb(|\zeta|>x)}
\ee
exist and satisfy $c_-+c_+>0$.

Given next are some notation to be in force throughout the paper. For a process $Y$, denote by $\sigma$ or $\sigma(Y)$ the first hitting time of $0$, that is,
$$\sigma(Y)=\sigma:=\inf\{ t>0\ :\ Y(t)=0\}.$$ For bounded measurable functions $f:\mathbb{R} \to \mathbb{R}$, put
\[
V_\lambda f(x):= \E^x \int_0^\sigma e^{-\lambda s}f(U_\alpha(s)){\rm d}s,
\]
so that $V_\lambda$ is the resolvent of $U_\alpha$ killed at $0$.
We write $D:=D[0,\infty)$ for the Skorokhod space of c\`{a}dl\`{a}g functions defined on $[0,\infty)$. We always assume that the space $D$ is endowed with the $J_1$-topology. %Also, we write $C_0(\mbR)$ for the space of all continuous functions vanishing at $\pm \infty$, equipped with the supremum norm.
For a measurable function $f:\mathbb{R}\to\mathbb{R}$ and a measure $\nu$ on $\mathbb{R}$ we write $\lg \nu, f\rg$ for $\int_{\mathbb{R}}f(x)\nu({\rm d}x)$ provided that the integral is well-defined. In particular, if $\nu$ is a probability measure, then $\lg \nu, f\rg=\E f(\tau)$, where $\tau$ is a random variable with distribution $\nu$.

We are ready to formulate our main results.

\noindent {\bf Theorem A.}
\textit{
Assume that either the function $x\mapsto \Pb(|\zeta|>x)$ is regularly varying at $+\infty$ of index $-\beta$, $\beta\in (0,1)$ and \eqref{eq:c+-} holds, or $\E|\zeta|<\infty$. If $X_{\zeta/n}(0)$ converges in distribution as $n\to\infty$ to some random variable $\xi$, then the processes $X_{\zeta/n}$ converge in distribution on $D$ to a Feller process $X$ starting at $\xi$.}

\noindent \textit{(a) If  $\beta<\alpha-1$,
 then the resolvent of $X$ is given by
\bel{eq:RES}
R_\lambda f(x) =V_\lambda f(x)+\E^x e^{-\lambda \sigma(U_\alpha)} \frac{\lg  \eta^\ast, V_\lambda f\rg}{\lg  \eta^\ast, V_\lambda 1\rg} 
\ee
for  bounded measurable  $f$,
where $\eta^\ast$ is a measure defined by
\begin{equation}\label{eq:measure_eta}
\eta^\ast({\rm d}x)=(c_-\1_{(-\infty, 0)}(x)+c_+\1_{(0,\infty)}(x))|x|^{-(1+\beta)}{\rm d}x,\quad x\in\mathbb{R},
\end{equation}
and the constants $c_\pm$ are given in \eqref{eq:c+-}.}

\noindent \textit{
(b) If $\beta>\alpha-1$ or $\E |\zeta|<\infty$,
then $ X(t)= \xi+U_\alpha(t)$, $t\geq 0$, where $\xi$ and $U_\alpha$ are independent.
}

One of the standing assumptions of the previous theorem is $\alpha\in (1,2)$. Put formally $\alpha=2$, so that the noise becomes a Brownian motion $W$, say. Then, under the assumption $\E |\zeta|<\infty$, a counterpart of the limit process in Theorem A, still denoted by $X$, is a SBM which solves the SDE
\bel{eq:SBM}
{\rm d}X(t) = {\rm d} W(t) + \gamma {\rm d}L(t).
\ee
Here, $\gamma=\frac{\E\zeta}{\E |\zeta|}\in [-1,1]$, and $L$ is a two-sided local time of $X$ at $0$. The claim can be justified with the help of arguments given in \cite{ChernyShiryaevYor} or \cite{IksanovPilipenko2016Perturbed}. Alternatively, this can be shown along the lines of the proof of Theorem A.

Although a resolvent uniquely determines the corresponding Feller process, formula \eqref{eq:RES}, being rather implicit, does not shed much light on the properties of $X$. As a remedy, we characterize a Feller process with resolvent \eqref{eq:RES} as a solution to an SDE.

\noindent {\bf Theorem B.}
\textit{Assume that the function $x\mapsto \Pb(|\zeta|>x)$ is regularly varying at $+\infty$ of index $-\beta$, $\beta\in (0,\alpha-1)$ and \eqref{eq:c+-} holds. Let $X$ be a Feller process with resolvent \eqref{eq:RES}.
Then $X$ is a weak solution to the SDE
\bel{eq:SKEW_L}
X(t)=X(0)+U_\alpha(t)+ S_\beta(L^X_0(t)),\quad t\geq 0.
\ee
Here, $L^X_0$ is a Blumenthal-Getoor local time of $X$ at $0$,
$S_\beta$ is a $\beta$-stable L\'{e}vy process which is independent of $U_\alpha$ and has the L\'{e}vy measure $\eta$ given by \begin{equation}\label{eq:Levy measure}
\eta({\rm d}x)=C(c_-\1_{(-\infty, 0)}(x)+c_+\1_{(0,\infty)}(x))|x|^{-(1+\beta)}{\rm d}x,\quad x\in\mathbb{R},
\end{equation}
where the constants $c_\pm$ are given in \eqref{eq:c+-}, $$C:=\Big(\int_0^\infty \E^x(1-e^{-\sigma(U_\alpha)})\eta({\rm d}x)\Big)^{-1}=\frac{\beta\sin\frac{\pi(\beta+1)}{\alpha}}{(c_-+c_+)\Gamma(1-\beta) \cos\frac{\pi\beta}{2}\sin\frac{\pi}{\alpha}}$$
and $\Gamma$ is the Euler gamma function.}

\textit{Furthermore, the process $X$ has a zero sojourn at $0$ with probability $1$.}
\begin{remk}
The definition of the Blumenthal-Getoor local time can be found in \cite[Theorem 2.3, Chapter V Section 2]{Blumenthal2012excursions} or Section \ref{sec2} below.
\end{remk}
\begin{remk}
The weak solution in Theorem A is a triple $(X, U_\alpha, S_\beta)$, with all components being defined on a common probability space, which satisfies equality \eqref{eq:SKEW_L} a.s. Here, the components are as defined in Theorem A. In particular,  $U_\alpha$ and $S_\beta$ are independent.

While not discussing a filtration, we only mention that it follows from the construction that the processes $U_\alpha$ and  $(S_\beta(L_0^X(t)))_{t\geq 0}$ are $(\cF^X_t)_{t\geq 0}$-adapted, where $(\cF^X_t)_{t\geq 0}$ is a filtration generated by $X$ and augmented by events of probability $0$. Observe that the process $S_\beta$ is not $(\cF^X_t)_{t\geq 0}$-adapted.
%Weak solution means a triple $(X, U_\alpha, U_\beta)$ defined on the same probability space such that $X$ is a Feller process with the corresponding resolvent, $U_\alpha$ and $U_\beta$ are independent, and we have
% equality \eqref{eq:SKEW_L} a.s. We do not discuss here a filtration. Mention only
% that it will follow from the construction that processes $\{U_\alpha(t)\}$
% and $\{U_\beta(L_0^X(t))\}$ will be $(\cF^X_t)$-adapted, where $(\cF^X_t)$ is the filtration
% generated by the process $X$ and augmented by events of null-probability. Note that the process $\{U_\beta(t)\}$ cannot be $(\cF^X_t)$-adapted.
\end{remk}
Comparing equations \eqref{eq:SBM} and \eqref{eq:SKEW_L} we find it reasonable to call the process $X$ with resolvent \eqref{eq:RES} a {\it skew $\alpha$-stable L\'{e}vy process}.

Equation \eqref{eq:SBM} has a unique solution if $|\gamma|\leq 1$ and has no solution if $|\gamma|>1$, see \cite{Harrison+Shepp}. An interesting problem is to find a counterpart of the parameter $\gamma$ for equations like   \eqref{eq:SKEW_L}. Theorems C and D given next provide a solution to the problem as well as % answer this question and provide
a description of the corresponding processes %satisfying \eqref{eq:SKEW_L}
with the help of It\^{o}'s excursion theory. We shall recall basic definitions and results of the theory in Section \ref{sec2} below.

For $t\geq 0$, put $\eta^{{\rm jump}}_t:=\eta P^0_t$, where the measure $\eta$ is as defined in Theorem B, and $P^0_t$ is the semigroup of $U_\alpha$ killed at $0$. For $x\in\mathbb{R}$, denote by $\bar P^x$ the semigroup of $U_\alpha$ stopped at $0$. It can be checked that $\bar P^\eta:=\int_\mbR P^x \eta({\rm d}x)$ is the excursion measure of a skew $\alpha$-stable L\'{e}vy process. Denote by $(\eta^{{\rm c}}_t)_{t>0}$ and $\hat P^{U_\alpha}$ the entrance law of $U_\alpha$ and the corresponding excursion measure, respectively.

\noindent {\bf Theorem C.}
\textit{Assume that the function $x\mapsto \Pb(|\zeta|>x)$ is regularly varying at $+\infty$ of index $-\beta$, $\beta\in (0,\alpha-1)$ and \eqref{eq:c+-} holds. Let $p\in[0,1]$ and $X$ be a Feller process having the entrance law $(p \eta^{{\rm jump}}_t+(1-p) \eta^{{\rm c}}_t)_{t>0}$ and the corresponding excursion measure $p \bar P^\eta+(1-p) \hat P^{U_\alpha}$, where the measure $\eta$ is defined in \eqref{eq:Levy measure}. Then $X$ is a weak solution to the SDE
\bel{eq:SKEW_L1}
X(t)=X(0)+U_\alpha(t)+ p^{\frac{1}{\beta}}S_\beta(L^X_0(t)),\quad t\geq 0,
\ee
where $L^X_0$, $S_\beta$ are as given in Theorem B.
}

\textit{
Furthermore, the process  $X$ has a zero sojourn at $0$ with probability $1$.
}

\noindent {\bf Theorem D.}
\textit{Assume that the function $x\mapsto \Pb(|\zeta|>x)$ is regularly varying at $+\infty$ of index $-\beta$, $\beta\in (0,\alpha-1)$ and \eqref{eq:c+-} holds. Let $p\in[0,1]$ and $S_\beta$ be a $\beta$-stable L\'{e}vy process as defined in Theorem B. Then there exists a unique Feller process $X$ with a zero sojourn at $0$, which is a weak solution to equation \eqref{eq:SKEW_L1}, with  $U_\alpha$ being an $(\cF_t^X)_{t\geq 0}$-adapted process.} 
\begin{remk}
Even though we require in Theorem D $(\cF_t^X)_{t\geq 0}$-adaptedness of $U_\alpha$, it may follow automatically from  \eqref{eq:SKEW_L1} and independence of $U_\alpha$ and $S_\beta$.
\end{remk}

We close the section with the list of open problems which are non-trivial even for the SBM.

\noindent 1) Characterize a sticky skew $\alpha$-stable L\'{e}vy process and the corresponding SDE in the way similar to that used for the sticky Brownian motion, see \cite{EngelbertPeskir}.

%\noindent 2) Study existence and uniqueness of a strong solution or a path-by-path
%solution in the sense of Davie \cite{Davie} for equation \eqref{eq:SKEW_L1}. This problem is non-trivial even for the case when the noise is a Wiener process and the process $U_\beta$ is a subordinator \cite{Pilipenko:SkorokhodMap}, see also \cite{PilipekoPrykhodko2014jump_exit, IksanovPilipenkoPrykhodko2021}.

\noindent 2) Consider a time inhomogeneous analogue of a skew stable L\'{e}vy process and describe the corresponding semigroup in terms of partial differential equations with Feller-Wenzell boundary condition at $0$, see \cite{kopytko2020} for the Brownian case.

\noindent 3) Investigate existence and uniqueness of a strong solution or a path-by-path solution in the sense of Davie \cite{Davie} to equation \eqref{eq:SKEW_L1}. This problem is non-trivial even in the case where the noise is a Brownian motion and the process $S_\beta$ is a subordinator \cite{Pilipenko:SkorokhodMap}, see also \cite{PilipekoPrykhodko2014jump_exit, IksanovPilipenkoPrykhodko2021}.

\noindent 4) Prove uniqueness of a weak solution to \eqref{eq:SKEW_L1} among all (possibly non-Markov) solutions in the situations that the local time is defined in terms of the time spent in a neighborhood of $0$, or a number of long excursions, or a number of a level crossings, etc.

The remainder of the paper is structured as follows. In Section \ref{sec1} we use a resolvent technique and prove Theorem A. In Section \ref{sec2} we recall some basic facts of It\^{o}'s excursion theory. In Section \ref{sec:SkewProofs} we prove Theorems B, C and D and their generalizations.

\section{Convergence of resolvents }\label{sec1}

\subsection{Discussion and limit theorem}

In view of the assumption $X_\zeta(0)=x\neq0$, the process $X_\zeta$ does not visit $0$.
It follows from the construction that $X_\zeta$ is a strong Markov process on $\mbR\setminus\{0\}$
with c\`{a}dl\`{a}g paths. We shall investigate distributional convergence of the processes $X_{\zeta/n}$ as $n\to\infty$.
Since we expect that a limit process visits $0$, the machinery of Markov processes on $\mbR$ rather than $\mbR\setminus\{0\}$ has to be exploited. To this end, we introduce an auxiliary {\it holding and jumping process} that spends at $0$ a random period of time having an exponential distribution (exponential time, in short), then has jump $\zeta_k$ and afterwards behaves like $U_\alpha$ until the next visit to $0$. The evolution just described then iterates, and the successive exponential times at $0$ are independent and identically distributed.

Here is a formal construction. For $m>0$, denote by $\tau_1$, $\tau_2,\ldots$ independent random variables having the  exponential distribution of mean $1/m$. Assume that the sequences $(\zeta_k)_{k\geq 1}$ and $(\tau_k)_{k\geq 1}$ and the process $U_\alpha$ are independent.
Similarly to \eqref{eq:def_of_X}, put
$$\tilde\sigma_0=0,\quad \tilde\sigma_{k+1}:=\inf\{t>\tilde \sigma_k+\tau_k\ : \
\zeta_k + U_\alpha(t)-U_\alpha(\tilde \sigma_k+\tau_k)=0\},\quad k\geq 0$$
and then
$$X_{\zeta,m}(t):=0, \quad \text{for}~~ t\in[\tilde \sigma_k, \tilde \sigma_k+\tau_k),\ k\geq 0
$$ and
$$X_{\zeta,m}(t):=\zeta_k+ U_\alpha(t)-U_\alpha(\tilde \sigma_k+\tau_k),\quad \text{for}~~ t\in[\tilde \sigma_k+\tau_k,
 \tilde \sigma_{k+1}), \ k\geq 0.$$ The so defined $X_{\zeta, m}$ is a Feller process on $\mbR$ with c\`{a}dl\`{a}g paths. Unlike $X_\zeta$, the process  $X_{\zeta,m}$ visits $0$ and may start at $0$.

Recall {\it Slutsky's lemma}: if $(X_n)_{n\geq 1}$ and $(Y_n)_{n\geq 1}$ are sequences of random elements in a metric space (with metric ${\rm dist}$) which satisfy ${\rm dist}\,(X_n,Y_n)\pto 0$ as $n\to\infty$, and the elements $X_n$ converge in distribution as $n\to\infty$ to a random element $X$, then the elements $Y_n$ converge in distribution to $X$, too.

Note that
\[
d(X_{\zeta}, X_{\zeta,m})\pto 0,\quad m\to\infty,
\]
where $d$ is the $J_1$-metric on $D$ and $\overset{\Pb}\to$ denotes convergence in probability. Hence, distributional convergence of $X_{\zeta/n}$ to $X$ follows if we can show that $X_{\zeta/n, m_n}$ converges in distribution, where $(m_n)$ is a sequence which diverges to $+\infty$ sufficiently fast as $n\to\infty$.

For $x,y\in\mathbb{R}$ and a Markov process $X$, denote by $P_t(x,{\rm d}y)$ its transition probability function at time $t>0$. Also, for bounded measurable % continuous 
functions $f:\mathbb{R}\to\mathbb{R}$, we define the semigroup
\[
 \E^x f(X(t)):= \int_{\mbR} f(y) P_t(x, {\rm d}y)=\lg P_t, f\rg (x)=P_tf(x),\quad t\geq 0, \ x\in\mathbb{R}
\]
and the resolvent
\[
R_\lambda f(x):= R_\lambda^X f(x):=\E^x \int_0^\infty e^{-\lambda t} f(X(t)){\rm d}t = \int_0^\infty
 e^{-\lambda t} P_t f(x){\rm d}t,\quad x\in\mathbb{R}.
\]
Further, for $x,y\in\mathbb{R}$, denote by $P^0_t(x,{\rm d}y)$ the transition probability function at time $t>0$ for the process $X$ killed upon the first visit to $0$, that is,
\[
P^0_t(x, A)= \Pb^x(X(t)\in A, t<\sigma),\quad x\in\mathbb{R}
\]
for Borel sets $A$ on $\mathbb{R}$. Also, we define the semigroup of the killed process
\[
\E^x f(X(t))\1_{\{t<\sigma\}}= \int_{\mbR} f(y) P^0_t(x, {\rm d}y)=\lg P^0_t, f\rg (x)=P^0_t f(x),\quad t\geq 0, \ x\in\mathbb{R}
\]
and its resolvent
\[
V_\lambda f(x)= V_\lambda^X f(x):=\E^x \int_0^\sigma e^{-\lambda t} f(X(t)) {\rm d}t =
\int_0^\infty e^{-\lambda t} P^0_t f(x){\rm d}t,\quad x\in\mathbb{R}.
\]
In Section \ref{intro} we have used the same notation $V_\lambda$ for the resolvent of the particular killed Markov process $U_\alpha$. Hopefully, this does not lead to a confusion. For later use, we note that if $X$ is a strong Markov process, then \begin{multline}\label{eq:resolvent_general}
R_\lambda f(x)=\E^x \Big(\int_0^{\sigma(X)}+\int_{\sigma(X)}^\infty\Big) e^{-\lambda t} f(X(t)){\rm d}t\\ = V_\lambda f(x)+\E^x e^{-\lambda \sigma(X)} \int_0^\infty e^{-\lambda t} f(X(t+\sigma(X))){\rm d}t\\ = V_\lambda f(x)+\E^x e^{-\lambda \sigma(X)} R_\lambda f(0),\quad x\in\mathbb{R}.
\end{multline}

Theorem \ref{thm:weak_convergenceEK} states that the uniform convergence of resolvents entails distributional convergence of the corresponding Markov processes. We write $C_0(\mbR)$ for the space of all continuous functions vanishing at $\pm \infty$, equipped with the supremum norm.
\begin{thm}\label{thm:weak_convergenceEK}
Let $(R^{(n)}_\lambda)_{n\geq 1}$ be a sequence of resolvents of some Feller processes $(X_n)_{n\geq 1}$. Assume that, for each $f\in C_0(\mbR)$ and each $\lambda>0$,
\[
\lim_{n\to\infty}\sup_{x\in\mathbb{R}}|R^{(n)}_\lambda f(x)-R_\lambda f(x)|=0,
\]
where $R_\lambda$ is the resolvent of a Feller process $X$. Assume also that the variables $X_n(0)$ converge in distribution as $n\to\infty $ to $X(0)$. Then
\[
X_n\Rightarrow X,\quad n\to\infty
\]
on $D$.
\end{thm}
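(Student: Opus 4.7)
The plan is to deduce weak convergence on $D$ in three steps: (i) upgrade the uniform convergence of resolvents to uniform convergence of the semigroups; (ii) derive convergence of finite-dimensional distributions; (iii) establish tightness of $(X_n)$ in $(D,J_1)$. For (i), since each $R^{(n)}_\lambda$ and $R_\lambda$ is a strongly continuous contraction resolvent on the Banach space $C_0(\mbR)$ generating a Feller semigroup, I would invoke the Trotter--Kato--Kurtz approximation theorem: the hypothesized norm convergence $\|R^{(n)}_\lambda f - R_\lambda f\|_\infty \to 0$ for each $\lambda>0$ and $f\in C_0(\mbR)$ is equivalent to
\[
\Sup_{t\in[0,T]}\|P^{(n)}_t f - P_t f\|_\infty \to 0,\quad n\to\infty,
\]
for every $f\in C_0(\mbR)$ and $T>0$.

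For (ii), I fix $0\le t_1<\cdots<t_k$ and $f_1,\ldots,f_k\in C_0(\mbR)$ and write, using the Markov property,
\[
\E^x\Bigl[\prod_{j=1}^k f_j(X_n(t_j))\Bigr]=P^{(n)}_{t_1}\bigl(f_1\cdot P^{(n)}_{t_2-t_1}(f_2\cdots P^{(n)}_{t_k-t_{k-1}}f_k)\bigr)(x),
\]
and similarly for $X$. Since every Feller semigroup preserves $C_0(\mbR)$ and the $f_j$ are bounded, induction on $k$ together with (i) yields uniform-in-$x$ convergence of the right-hand side to the corresponding expression for $X$. Combined with the hypothesis $X_n(0)\Rightarrow X(0)$, this gives convergence of all finite-dimensional distributions.

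The step I expect to be most delicate is (iii), tightness in $(D,J_1)$, for which I plan to verify a Jakubowski/Aldous-type criterion. Compact containment should follow from the Feller property of the limit and tightness of the initial distributions: for each $T,\ve>0$ one produces a compact $K\subset\mbR$ with $\Pb(X_n(t)\in K\text{ for all } t\in[0,T])\ge 1-\ve$ uniformly in $n$. Control of oscillations across stopping times is delivered by the martingale structure of Feller processes: for $g\in C_0(\mbR)$ and $\lambda>0$, setting $h_n:=R^{(n)}_\lambda g$, the process $h_n(X_n(t))-\int_0^t(\lambda h_n-g)(X_n(s))\de s$ is a genuine $\Pb^x$-martingale, whose bounded-variation part and uniform boundedness yield Aldous' oscillation bound for $h_n(X_n(\cdot))$; since $\|h_n-R_\lambda g\|_\infty\to 0$ by (i), the same bound holds for $R_\lambda g(X_n(\cdot))$. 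As the family $\{R_\lambda g:g\in C_0(\mbR),\ \lambda>0\}$ is dense in $C_0(\mbR)$, Jakubowski's criterion delivers tightness of $(X_n)$ in $D$. Combining (ii) and (iii) with the càdlàg, quasi-left-continuous nature of the Feller limit $X$ yields $X_n\Rightarrow X$ in $D$.
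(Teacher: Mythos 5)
Your proposal is correct and follows essentially the same route as the paper, whose proof consists of citing the Trotter--Kato theorem (Pazy, Thm.~3.4.2) to pass from resolvent to semigroup convergence and then the standard Feller convergence theorem (Ethier--Kurtz, Thm.~4.2.5), whose proof is exactly your steps (ii)--(iii): finite-dimensional convergence via the Markov property plus tightness from the resolvent martingales $R^{(n)}_\lambda g(X_n(t))-\int_0^t(\lambda R^{(n)}_\lambda g-g)(X_n(s))\,\mathrm{d}s$. The only point worth tightening is compact containment, which in the locally compact setting is usually handled by passing to the one-point compactification of $\mbR$ rather than deduced directly from the Feller property of the limit.
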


The proof follows from \cite[Theorem 2.5, Chapter 4]{Ethier+Kurtz:1986} and \cite[Theorem 4.2, Chapter 3]{Pazy}.

We shall write $R^{\zeta, m}_\lambda$ for the resolvent of $X_{\zeta, m}$. For bounded measurable functions $f:\mathbb{R}\to\mathbb{R}$ we intend to calculate $R^{\zeta, m}_\lambda f(0)$. Although this follows a standard pattern (see, for instance, \cite[pp.~136-137]{Blumenthal2012excursions}), we provide full details for completeness. Write
\begin{multline}\label{eq:resolvent_equ}
R^{\zeta, m}_\lambda f(0)=\E^0 \Big(\int_0^{\tau_1}+\int_{\tau_1}^\infty\Big) e^{-\lambda t} f(X_{\zeta, m}(t)){\rm d}t =
\lambda^{-1} \E(1-e^{-\lambda\tau_1}) f(0)\\+\E e^{-\lambda \tau_1}\E^\zeta \int_0^\infty e^{-\lambda t} f(X_{\zeta, m}(t)){\rm d}t= \frac{1}{m+\lambda} f(0)+ \frac{m}{m+\lambda}  \lg  P_\zeta, R^{\zeta,m}_\lambda f\rg.
\end{multline}
Using \eqref{eq:resolvent_general} we infer $$\lg  P_\zeta, R^{\zeta,m}_\lambda f\rg=\lg  P_\zeta, V^{U_\alpha}_\lambda f\rg+\E^\zeta e^{-\lambda \sigma(U_\alpha)} R_\lambda f(0)$$ having utilized the equality $\E^\zeta e^{-\lambda \sigma(X_{\zeta, m})}=\E^\zeta e^{-\lambda \sigma(U_\alpha)}$. Substituting this into \eqref{eq:resolvent_equ} and then solving for $R_\lambda f(0)$ yields
\begin{equation}\label{eq:resol}
\lambda R_\lambda^{\zeta,m} f(0)=
\frac{\frac{f(0)}{m}+\lg  P_\zeta,  V^{U_\alpha}_\lambda f\rg}{\frac{1}{m}+\lambda^{-1}
\E^\zeta(1-e^{-\lambda \sigma(U_\alpha)})}=
\frac{\frac{f(0)}{m}+\lg  P_\zeta, V^{U_\alpha}_\lambda f\rg}{\frac{1}{m}+\lg  P_\zeta, V^{U_\alpha}_\lambda 1\rg}.
\end{equation}

\begin{thm}\label{thm:convergence_resolvents}
Assume that either the function $x\mapsto \Pb(|\zeta|>x)$ is regularly varying at $+\infty$ of index $-\beta$, $\beta\in (0,1)$ and \eqref{eq:c+-} holds, or $\E|\zeta|<\infty$. Let $f:\mathbb{R}\to\mathbb{R}$ be any bounded measurable function.

\noindent (a) If $\beta<\alpha-1$, then
\bel{eq:1partTheoremResolvents}
\lim_{n\to\infty} \lambda R_\lambda^{\zeta/n, m_n} f(0)=
 \frac{\lg  \eta^\ast, V^{U_\alpha}_\lambda f\rg}{\lg  \eta^\ast, V^{U_\alpha}_\lambda 1\rg},
\ee
where $(m_n)_{n\geq 1}$ is any sequence of positive numbers satisfying $\lim_{n\to\infty}m_n\Pb(\zeta>n)=\infty$, and $\eta^\ast$ is the measure defined by \eqref{eq:measure_eta}.

\noindent (b) If $\beta>\alpha-1$ or $\E|\zeta|<\infty$, then
\[
\lim_{n\to\infty}   R_\lambda^{\zeta/n, m_n} f(0)=
 R^{U_\alpha}_\lambda f(0),
\]
where $(m_n)_{n\geq 1}$ is any sequence of positive numbers satisfying $\lim_{n\to\infty}m_n n^{1-\alpha}=\infty$, and $R^{U_\alpha}_\lambda$ is the resolvent of $U_\alpha$.
\end{thm}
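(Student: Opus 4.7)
My approach starts from formula \eqref{eq:resol} with $\zeta\to\zeta/n$ and $m\to m_n$, reducing the task to the rate of decay of $\E V^{U_\alpha}_\lambda f(\zeta/n)$ and $\E V^{U_\alpha}_\lambda 1(\zeta/n)$. The strong Markov identity $R^{U_\alpha}_\lambda f(x) = V^{U_\alpha}_\lambda f(x) + \E^x[e^{-\lambda\sigma}] R^{U_\alpha}_\lambda f(0)$ combined with $\E^x[e^{-\lambda\sigma}] = 1 - \lambda V^{U_\alpha}_\lambda 1(x)$ gives the decomposition
\[
V^{U_\alpha}_\lambda f(x) = \lambda R^{U_\alpha}_\lambda f(0)\,V^{U_\alpha}_\lambda 1(x) + \bigl[R^{U_\alpha}_\lambda f(x) - R^{U_\alpha}_\lambda f(0)\bigr].
\]
Two analytic inputs about the symmetric $\alpha$-stable potential $u_\lambda$ (with the identity $\E^x[e^{-\lambda\sigma(U_\alpha)}] = u_\lambda(x)/u_\lambda(0)$) are central: (i) Fourier inversion yields $u_\lambda(0) - u_\lambda(x) \sim \kappa|x|^{\alpha-1}$ as $x\to 0$ with $\kappa = (2\pi)^{-1}\int(1-\cos w)|w|^{-\alpha}\,{\rm d}w$, so $V^{U_\alpha}_\lambda 1(x) \sim \kappa|x|^{\alpha-1}/(\lambda u_\lambda(0))$ together with a uniform companion bound $V^{U_\alpha}_\lambda 1(x) \le K\min(|x|^{\alpha-1},1)$; (ii) $\|u_\lambda'\|_{L^1} < \infty$ (since $\int|p_t'(x)|\,{\rm d}x = 2p_t(0) = 2t^{-1/\alpha}p_1(0)$ and $\alpha>1$ makes the resulting $t$-integral converge), whence $|R^{U_\alpha}_\lambda f(x) - R^{U_\alpha}_\lambda f(0)| \le \|f\|_\infty\min(|x|\|u_\lambda'\|_{L^1},\, 2/\lambda)$.

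For case (b) the hypothesis forces $\E|\zeta|^{\alpha-1} < \infty$ (either directly from $\beta>\alpha-1$ applied to the regularly varying tail, or from $\alpha-1<1$ combined with $\E|\zeta|<\infty$). Dominated convergence then yields $n^{\alpha-1}\E V^{U_\alpha}_\lambda 1(\zeta/n) \to (\kappa/(\lambda u_\lambda(0)))\E|\zeta|^{\alpha-1}$; the Lipschitz-type estimate combined with truncation (using Karamata's theorem in the heavy-tailed sub-case) gives $\E|R^{U_\alpha}_\lambda f(\zeta/n)-R^{U_\alpha}_\lambda f(0)| = o(n^{-(\alpha-1)})$. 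Since $m_n n^{1-\alpha}\to\infty$, the term $1/m_n$ is also $o(n^{-(\alpha-1)})$. Substituting these three estimates into the decomposition pins down the limit of the ratio in \eqref{eq:resol} at $\lambda R^{U_\alpha}_\lambda f(0)$, proving the case.

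For case (a) I multiply numerator and denominator of \eqref{eq:resol} by $\psi(n)^{-1}$, where $\psi(n):=\Pb(|\zeta|>n)$. The heart of the matter is the claim that for any bounded measurable $g$ satisfying $|g(x)|\le K\min(|x|^{\alpha-1},1)$,
\[
\psi(n)^{-1}\E g(\zeta/n) \to \beta\lg \eta^*, g\rg.
\]
This is proved by splitting at a small level $\varepsilon>0$: on $\{|\zeta/n|>\varepsilon\}$, vague convergence of $\psi(n)^{-1}\Pb(\zeta/n \in \cdot)$ to $\beta\eta^*$ on $\mbR\setminus\{0\}$ (a direct consequence of regular variation of $\psi$ together with \eqref{eq:c+-}) produces the bulk contribution $\int_{|x|>\varepsilon}g(x)\,\beta\eta^*({\rm d}x)$; on $\{|\zeta/n|\le\varepsilon\}$ the bound $|g(x)|\le K|x|^{\alpha-1}$ and Karamata's theorem applied to the truncated moment $\E|\zeta|^{\alpha-1}\1_{\{|\zeta|\le n\varepsilon\}}$ produce an $O(\varepsilon^{\alpha-1-\beta})$ error, which vanishes as $\varepsilon\to 0$ precisely because $\beta<\alpha-1$. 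Applying this to $g=V^{U_\alpha}_\lambda f$ and $g=V^{U_\alpha}_\lambda 1$, the $\beta$ factors cancel in the ratio; the assumption $m_n\Pb(\zeta>n)\to\infty$ handles the $1/m_n$ terms, yielding \eqref{eq:1partTheoremResolvents}. The principal technical obstacle of the whole argument is the near-origin analysis of $V^{U_\alpha}_\lambda 1$ and the $L^1$ control of $u_\lambda'$: these are the ingredients that identify $\alpha-1$ as the correct threshold against $\beta$ and thereby drive the dichotomy between the two cases.
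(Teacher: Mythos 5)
Your argument is correct, and part (a) is essentially the paper's own proof: the paper's key Lemma on $\E g(\zeta/n)/\Pb(\zeta>n)$ for bounded $g$ with $g(x)=O(|x|^{\alpha-1})$ is proved by exactly your splitting at level $\ve$, with regular variation giving the bulk and Karamata's theorem giving the $O(\ve^{\alpha-1-\beta})$ error near the origin; your extra factor $\beta$ (from normalizing by $\Pb(|\zeta|>n)$ rather than $\Pb(\zeta>n)$) cancels in the ratio just as you say. Part (b), however, takes a genuinely different and arguably cleaner route. The paper proves $\lim_{x\to0}V_\lambda f(x)/V_\lambda 1(x)=\lambda R^{U_\alpha}_\lambda f(0)$ probabilistically: it represents the ratio as the resolvent of the process perturbed by deterministic jumps of size $x$ from $0$, and shows $xN_\alpha(x,t)\pto 0$ via self-similarity, the tail asymptotics $\Pb^1(\sigma>y)\sim B_\alpha y^{-1+1/\alpha}$ and a renewal-type limit theorem of Meerschaert--Scheffler; it then feeds this into a separate elementary lemma on ratios of integrals against measures concentrating at $0$. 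You instead use the purely analytic identity $V_\lambda f(x)=\lambda R_\lambda f(0)V_\lambda 1(x)+\bigl(R_\lambda f(x)-R_\lambda f(0)\bigr)$ (a direct consequence of the strong Markov decomposition and $\E^x e^{-\lambda\sigma}=1-\lambda V_\lambda 1(x)$), together with the bound $\|u_\lambda'\|_{L^1}<\infty$ obtained from unimodality of the stable density and $\int|p_t'|=2p_t(0)=2t^{-1/\alpha}p_1(0)$. This replaces the external limit theorem by a two-line Lipschitz estimate, makes the error term $o(n^{-(\alpha-1)})$ explicit against the lower bound $\E V_\lambda 1(\zeta/n)\asymp n^{-(\alpha-1)}$, and has the side benefit of working directly for bounded \emph{measurable} $f$ (the paper's corresponding lemma is stated for continuous $f$, a small mismatch with the hypotheses of the theorem that your decomposition sidesteps, since $R_\lambda f$ is continuous for any bounded measurable $f$ by $L^1$-continuity of translation). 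Both routes identify $\alpha-1$ as the threshold against $\beta$ in the same way, through the asymptotics $\lambda V_\lambda 1(x)\sim A_{\lambda,\alpha}|x|^{\alpha-1}$.
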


We claim that
\[
\lim_{n\to\infty}\sup_{x\in\mathbb{R}}|R_\lambda^{\zeta/n, m_n}f(x)-R_\lambda f(x)|=0
\]
for any bounded measurable $f:\mathbb{R}\to\mathbb{R}$ and any $\lambda>0$. Here, for $x\in\mathbb{R}$,
\bel{eq:res_limit_proc}
R_\lambda f(x)=
\begin{cases}
V^{U_\alpha}_\lambda f(x)+\lambda^{-1}\E^x e^{-\lambda \sigma(U_\alpha)} \frac{\lg  \eta^\ast, V^{U_\alpha}_\lambda f\rg}{\lg  \eta^\ast, V^{U_\alpha}_\lambda 1\rg},\quad \text{if} \ \beta<\alpha-1,\\
R^{U_\alpha}_\lambda f(x),\quad \text{if} \ \beta>\alpha-1~\text{or}~\E|\zeta|<\infty.
\end{cases}
\ee

To check this, observe that $V^{U_\alpha}_\lambda=V^{X_{\zeta/n, m_n}}_\lambda$, $\E^x e^{-\lambda \sigma(X_{\zeta, m})}=\E^x e^{-\lambda \sigma(U_\alpha)}$ for $x\in\mathbb{R}$. Further, %We have
$R_\lambda f(0)=\lambda^{-1}\frac{\lg  \eta^\ast, V^{U_\alpha}_\lambda f\rg}{\lg  \eta^\ast, V^{U_\alpha}_\lambda 1\rg}$ if $\beta<\alpha-1$ and $R_\lambda f(0)=R^{U_\alpha}_\lambda f(0)$ if $\beta>\alpha-1$ or $\E|\zeta|<\infty$. With these at hand, invoking \eqref{eq:resolvent_general} yields, for any $x\in\mathbb{R}$,
\begin{multline*}
|R_\lambda^{\zeta/n, m_n}f(x)-R_\lambda f(x)|=\E^x e^{-\lambda \sigma(U_\alpha)}|R_\lambda^{\zeta/n, m_n}f(0)-R_\lambda f(0)|\\
\leq |R_\lambda^{\zeta/n, m_n}f(0)-R_\lambda f(0)| ~\to~0,\quad n\to\infty.
\end{multline*}
We have used Theorem \ref{thm:convergence_resolvents} for the limit relation.

According to \cite[Chapter V, \S 2, Theorem 2.8]{Blumenthal2012excursions}, $R_\lambda$ is the resolvent of a strongly continuous probability semigroup and, as such, the resolvent of a Feller process. This observation in combination with Theorem \ref{thm:weak_convergenceEK} leads to the following.
\begin{corl}\label{corl:conv_proc}
Assume that the variables $X_{\zeta/n}(0)$ converge in distribution as $n\to\infty$ to a random variable $\xi$. Then the processes $X_{\zeta/n}$ converge in distribution on $D$ to a Feller process $X$ with the resolvent $R_\lambda$ defined in \eqref{eq:res_limit_proc} and $X(0)$ having the same distribution as $\xi$. In particular, if $\beta>\alpha-1$, then the limit process is $\xi+U_\alpha$.
\end{corl}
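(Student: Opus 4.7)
The plan is to assemble the pieces already set up in the paragraphs preceding the corollary. The displayed bound
$$|R_\lambda^{\zeta/n, m_n}f(x)-R_\lambda f(x)|\le |R_\lambda^{\zeta/n, m_n}f(0)-R_\lambda f(0)|,$$
obtained from \eqref{eq:resolvent_general} together with the identities $V_\lambda^{X_{\zeta/n,m_n}}=V_\lambda^{U_\alpha}$ and $\E^x e^{-\lambda\sigma(X_{\zeta/n,m_n})}=\E^x e^{-\lambda\sigma(U_\alpha)}$, combined with Theorem \ref{thm:convergence_resolvents}, already yields $\sup_{x\in\mathbb{R}}|R_\lambda^{\zeta/n, m_n}f(x)-R_\lambda f(x)|\to 0$ for every bounded measurable $f$ and hence for every $f\in C_0(\mathbb{R})$. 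Invoking \cite[Chapter V, \S 2, Theorem 2.8]{Blumenthal2012excursions} identifies $R_\lambda$ as the resolvent of a strongly continuous sub-Markovian semigroup; the fact that $\lambda R_\lambda 1\equiv 1$ (seen by plugging $f\equiv 1$ into \eqref{eq:res_limit_proc}) upgrades this to a probability semigroup, and its Feller property is preserved from the $U_\alpha$-killed semigroup off $0$ together with the explicit form of the extension at $0$.

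Next I would apply Theorem \ref{thm:weak_convergenceEK} to the sequence $(X_{\zeta/n, m_n})_{n\geq 1}$. The initial distributions agree with those of $X_{\zeta/n}$, which by hypothesis converge to the law of $\xi$, so the theorem delivers $X_{\zeta/n, m_n}\Rightarrow X$ in $D$, where $X$ is a Feller process with resolvent $R_\lambda$ and $X(0)\stackrel{d}{=}\xi$.

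It remains to transfer this convergence from $X_{\zeta/n,m_n}$ to $X_{\zeta/n}$ via Slutsky's lemma. For each fixed $n$, the construction in the beginning of Section \ref{sec1} gives $d(X_{\zeta/n},X_{\zeta/n, m})\pto 0$ as $m\to\infty$. A standard diagonal argument then allows us to choose a single sequence $(m_n)_{n\geq 1}$ diverging to $+\infty$ fast enough to satisfy both the growth hypothesis of Theorem \ref{thm:convergence_resolvents} ($m_n\Pb(\zeta>n)\to\infty$ in case (a), or $m_n n^{1-\alpha}\to\infty$ in case (b)) and $d(X_{\zeta/n},X_{\zeta/n, m_n})\pto 0$ as $n\to\infty$. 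Slutsky's lemma then promotes $X_{\zeta/n, m_n}\Rightarrow X$ to $X_{\zeta/n}\Rightarrow X$ on $D$. The ``in particular'' clause is immediate: when $\beta>\alpha-1$ (or $\E|\zeta|<\infty$) the second line of \eqref{eq:res_limit_proc} reads $R_\lambda=R_\lambda^{U_\alpha}$, and since the resolvent determines the Feller process, $X$ must coincide in law with $\xi+U_\alpha$ (with $\xi$ independent of $U_\alpha$ by the construction of $X_{\zeta/n}$).

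The only genuinely delicate point is the diagonal choice of $(m_n)$; everything else is a bookkeeping exercise. Since both the resolvent-convergence condition on $m_n$ and the Slutsky condition at level $n$ ask only that $m_n$ eventually exceed some deterministic threshold (respectively a threshold depending only on $n$ through the probability tail), a standard interleaving works and no quantitative rate is required.
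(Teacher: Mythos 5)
Your proposal is correct and follows essentially the same route as the paper: uniform resolvent convergence via \eqref{eq:resolvent_general} and Theorem \ref{thm:convergence_resolvents}, identification of $R_\lambda$ as a Feller resolvent via Blumenthal's Theorem 2.8, convergence of $X_{\zeta/n,m_n}$ via Theorem \ref{thm:weak_convergenceEK}, and transfer to $X_{\zeta/n}$ by Slutsky's lemma with a diagonal choice of $m_n$. The paper leaves the diagonal step implicit (``$m_n\to\infty$ sufficiently fast''), so your making it explicit is a welcome, not a divergent, addition.
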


Theorem A is an immediate consequence of Corollary \ref{corl:conv_proc}.

\subsection{Auxiliary results}

In this section we prove a few preparatory results needed for the proof of Theorem \ref{thm:convergence_resolvents}. We shall treat the cases $\beta<\alpha-1$ and $\beta>\alpha-1$ or $\E|\zeta|<\infty$ in slightly different ways. As a consequence, we provide two collections of auxiliary results designed to deal with these cases. Throughout this section we write $\sigma$ for $\sigma(U_\alpha)$, $R_\lambda$ for $R^{U_\alpha}_\lambda$ and $V_\lambda$ for $V^{U_\alpha}_\lambda$.

\noindent {\sc Auxiliary results for the case $\beta<\alpha-1$}. For $\lambda>0$, the density $u_\lambda$ of the resolvent   kernel of $U_\alpha$ satisfies $u_\lambda(x,y)=u_\lambda(y-x)$, $x,y\in\mathbb{R}$, where 
\[
u_\lambda(x)=\frac{1}{ \pi} \int_0^\infty\frac{\cos(x\theta)}{\lambda+\theta^\alpha} {\rm d}\theta,\quad x\in\mathbb{R}.
\]
Lemma \ref{lem:calculating_integrals} collects a couple of formulae to be used in what follows.
\begin{lem}\label{lem:calculating_integrals}
Let $\alpha\in (1,2)$.

\noindent (a) For $\gamma\in [0,\alpha-1)$ and $\lambda>0$,
\[
\int_0^\infty \frac{\theta^\gamma}{\lambda+\theta^\alpha}{\rm d}\theta=\frac{\Gamma(1-\frac{\gamma+1}{\alpha})\Gamma(\frac{\gamma+1}{\alpha})}{\alpha\lambda^{1-\frac{\gamma+1}{\alpha}}}
=
\frac{\pi}{\alpha  \sin\frac{\pi(\gamma+1)}{\alpha}} \frac{1}{\lambda^{1-\frac{\gamma+1}{\alpha}}}.
\]
In particular,
\[
u_\lambda(0)=%\frac{\Gamma(1-\frac{1}{\alpha})\Gamma(\frac{1}{\alpha})}{\alpha\pi} \lambda^{ \frac{1}{\alpha}-1}=
\frac{1}{\alpha  \sin\frac{\pi}{\alpha} } \frac{1}{\lambda^{1-\frac{\gamma+1}{\alpha}}}.
\]

\noindent (b) For $x\in\mathbb{R}$,
$$\int_0^\infty \frac{1- \cos(xy)}{y^\alpha} {\rm d}y = |x|^{\alpha-1}\frac{\Gamma(2-\alpha)\sin \frac{\pi\alpha}{2}}{\alpha -1}.$$
\end{lem}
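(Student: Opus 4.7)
The plan for part (a) is to reduce the integral to a Beta function. Change variables via $u=\theta^\alpha/\lambda$, so that $\theta=(\lambda u)^{1/\alpha}$ and $\mathrm{d}\theta=(\alpha u)^{-1}(\lambda u)^{1/\alpha}\mathrm{d}u$; the integral becomes
\[
\int_0^\infty \frac{\theta^\gamma}{\lambda+\theta^\alpha}\,\mathrm{d}\theta
=\frac{\lambda^{(\gamma+1)/\alpha-1}}{\alpha}\int_0^\infty \frac{u^{(\gamma+1)/\alpha-1}}{1+u}\,\mathrm{d}u.
\]
The hypothesis $\gamma\in[0,\alpha-1)$ places $s:=(\gamma+1)/\alpha$ in the open interval $(0,1)$, so the classical identity
\[
\int_0^\infty \frac{u^{s-1}}{1+u}\,\mathrm{d}u=\Gamma(s)\Gamma(1-s)=\frac{\pi}{\sin\pi s}
\]
applies and yields both stated forms of the answer. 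The value of $u_\lambda(0)$ then follows by specialising to $\gamma=0$ and dividing by $\pi$.

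For part (b), evenness of $\cos$ lets me assume $x>0$, and the substitution $t=xy$ factors out $x^{\alpha-1}$, leaving $\int_0^\infty (1-\cos t)/t^\alpha\,\mathrm{d}t$. I would integrate by parts with $u=1-\cos t$ and $\mathrm{d}v=t^{-\alpha}\,\mathrm{d}t$; the boundary terms vanish because $1-\cos t=O(t^2)$ at the origin (together with $3-\alpha>0$) and because $\alpha-1>0$ at infinity. What remains is $(\alpha-1)^{-1}\int_0^\infty t^{1-\alpha}\sin t\,\mathrm{d}t$, which is the Mellin transform of $\sin$ at the point $s=2-\alpha\in(0,1)$; it equals $\Gamma(2-\alpha)\sin(\pi(2-\alpha)/2)=\Gamma(2-\alpha)\sin(\pi\alpha/2)$, and the claimed identity follows.

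I do not anticipate any serious obstacle: the lemma consists of two classical integral evaluations. The only delicate bookkeeping is confirming that the range restriction $\gamma<\alpha-1$ in (a) is exactly what puts $(\gamma+1)/\alpha$ strictly inside the interval of validity of the reflection formula, and that the standing assumption $\alpha\in(1,2)$ in (b) is precisely what simultaneously kills the two boundary contributions in the integration by parts and keeps $2-\alpha$ inside the Mellin strip where the formula for the Mellin transform of $\sin$ is valid.
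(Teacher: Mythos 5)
Your proof is correct, but it takes a different route from the paper, which disposes of the lemma almost entirely by citation: the first equality in (a) is quoted from Gradshteyn--Ryzhik, formula (3.241)(2), the second follows from Euler's reflection formula $\Gamma(1-z)\Gamma(z)=\pi/\sin(\pi z)$, the value of $u_\lambda(0)$ from the case $\gamma=0$ together with $u_\lambda(0)=\pi^{-1}\int_0^\infty(\lambda+\theta^\alpha)^{-1}\,{\rm d}\theta$, and part (b) is quoted from Sato, formula (14.18). You instead derive both table entries from scratch: the substitution $u=\theta^\alpha/\lambda$ reduces (a) to the Beta integral $\int_0^\infty u^{s-1}(1+u)^{-1}\,{\rm d}u=\Gamma(s)\Gamma(1-s)$ with $s=(\gamma+1)/\alpha\in(0,1)$, and your integration by parts in (b) correctly produces $(\alpha-1)^{-1}\int_0^\infty t^{1-\alpha}\sin t\,{\rm d}t=(\alpha-1)^{-1}\Gamma(2-\alpha)\sin\tfrac{\pi(2-\alpha)}{2}$, with both boundary terms killed exactly by $1<\alpha<2$ and with $\sin\tfrac{\pi(2-\alpha)}{2}=\sin\tfrac{\pi\alpha}{2}$. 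The only point worth making explicit is that $\int_0^\infty t^{1-\alpha}\sin t\,{\rm d}t$ converges only conditionally at infinity for $\alpha\in(1,2)$, so the integration by parts and the Mellin evaluation should be read as limits of integrals over $[\delta,T]$; this is routine (Dirichlet's test) and does not affect the result. Your version is self-contained where the paper's is reference-driven; both yield identical formulae.
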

\begin{proof}
While the first equality in the first formula of part (a) is a consequence of \cite[formula (3.241)(2)]{Gradshteyn+Ryzhik:2007}, the second equality follows from Euler's reflection formula $\Gamma(1-z)\Gamma(z)=\frac{\pi}{\sin (\pi z)}$ which holds true for any noninteger $z$. The second equality of part (a) is implied by the first with $\gamma=0$ and the formula $u_\lambda(0)=\pi^{-1}\int_0^\infty (\lambda+\theta^\alpha)^{-1}{\rm d}\theta$. Part (b) follows from \cite[formula (14.18)]{Sato}.
\end{proof}

While formula \eqref{eq:hit} of Lemma \ref{lem:asym} will be used in the proof of both parts of Theorem \ref{thm:convergence_resolvents}, formula \eqref{eq:592} will be used in the proof of Theorem \ref{thm:convergence_resolvents}(b).
\begin{lem}\label{lem:asym}
For $\alpha\in (1,2)$ and $\lambda>0$,
\begin{equation}\label{eq:hit}
\lambda V_\lambda 1(x)=\E^x(1- e^{-\lambda \sigma})~\sim~A_{\lambda, \alpha}|x|^{\alpha-1},\quad x\to 0,
\end{equation}
where $A_{\lambda, \alpha}:=\frac{\alpha\Gamma(2-\alpha)\sin \pi\alpha\sin\frac{\pi\alpha}2}{\pi(\alpha-1)}\lambda^{1-\frac{1}{\alpha}}$,
and
\bel{eq:592}
\Pb^1(\sigma>y)\sim B_\alpha y^{-1+\frac{1}{\alpha}},\quad y\to\infty,
\ee
where $B_\alpha:=\frac{\sin \pi\alpha \, \sin{\frac{\pi\alpha}2} \Gamma(1-\alpha)}{\pi \, \Gamma(1-\frac1\alpha)}$
\end{lem}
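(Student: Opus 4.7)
The plan is to reduce both asymptotics to the analysis of the resolvent density $u_\lambda$ via the classical identity
\[
\E^x e^{-\lambda \sigma}=\frac{u_\lambda(x)}{u_\lambda(0)},
\]
which follows from the strong Markov property at $\sigma$ together with the fact that the killed resolvent density vanishes at $y=0$: setting $y=0$ in $u_\lambda(x,y)=v_\lambda(x,y)+\E^x e^{-\lambda\sigma}u_\lambda(0,y)$ and using $v_\lambda(x,0)=0$.

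For \eqref{eq:hit}, observe first that $V_\lambda 1(x)=\E^x\int_0^\sigma e^{-\lambda t}\,{\rm d}t=\lambda^{-1}\E^x(1-e^{-\lambda\sigma})$, whence
\[
\lambda V_\lambda 1(x)=\frac{u_\lambda(0)-u_\lambda(x)}{u_\lambda(0)}.
\]
Using the Fourier representation of $u_\lambda$ recorded above Lemma~\ref{lem:calculating_integrals} and rescaling $\theta=s/|x|$ I obtain
\[
u_\lambda(0)-u_\lambda(x)=\frac{|x|^{\alpha-1}}{\pi}\int_0^\infty \frac{1-\cos s}{\lambda|x|^\alpha+s^\alpha}\,{\rm d}s.
\]
Dominated convergence with the envelope $(1-\cos s)/s^\alpha$ (integrable on $(0,\infty)$ because it is $O(s^{2-\alpha})$ near $0$ and $O(s^{-\alpha})$ at infinity) lets me pass $x\to 0$ inside, after which Lemma~\ref{lem:calculating_integrals}(b) evaluates the limiting integral in closed form. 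Dividing by the value of $u_\lambda(0)$ supplied by Lemma~\ref{lem:calculating_integrals}(a) with $\gamma=0$ and simplifying the trigonometric and $\Gamma$ factors through Euler's reflection formula yields \eqref{eq:hit}.

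For \eqref{eq:592}, I would use the same ratio representation at $x=1$ but analyse $\lambda\to 0^+$ instead of $x\to 0$. Dominated convergence with the same envelope gives
\[
\lim_{\lambda\to 0^+}\bigl(u_\lambda(0)-u_\lambda(1)\bigr)=\frac{1}{\pi}\int_0^\infty \frac{1-\cos\theta}{\theta^\alpha}\,{\rm d}\theta,
\]
a finite positive constant by Lemma~\ref{lem:calculating_integrals}(b), while Lemma~\ref{lem:calculating_integrals}(a) yields $u_\lambda(0)\sim C\lambda^{-1+1/\alpha}$ as $\lambda\to 0^+$. Consequently $1-\E^1 e^{-\lambda\sigma}\sim c\,\lambda^{1-1/\alpha}$ for an explicit $c$. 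Writing
\[
1-\E^1 e^{-\lambda\sigma}=\lambda\int_0^\infty e^{-\lambda t}\Pb^1(\sigma>t)\,{\rm d}t,
\]
and invoking Karamata's Tauberian theorem, applicable because $t\mapsto\Pb^1(\sigma>t)$ is non-increasing, converts this Laplace-side asymptotic into the tail asymptotic $\Pb^1(\sigma>y)\sim B_\alpha y^{-1+1/\alpha}$, with $B_\alpha=c/\Gamma(1/\alpha)$ after a second application of the reflection formula.

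The only genuine technical point is the two dominated-convergence arguments; both rest on the one-line bound $(1-\cos s)/s^\alpha\in L^1(0,\infty)$, which is equivalent to $\alpha\in(1,2)$, precisely our standing assumption. Everything else is algebraic simplification of the $\sin$ and $\Gamma$ factors into the stated constants $A_{\lambda,\alpha}$ and $B_\alpha$.
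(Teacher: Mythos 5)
Your proposal is correct and follows essentially the same route as the paper: the identity $\E^x e^{-\lambda\sigma}=u_\lambda(x)/u_\lambda(0)$ (the paper cites Bertoin, Corollary II.5.8), the rescaled Fourier integral with dominated convergence and Lemma~\ref{lem:calculating_integrals} for \eqref{eq:hit}, and the small-$\lambda$ asymptotics of $1-\E^1 e^{-\lambda\sigma}$ combined with Karamata's Tauberian theorem (the paper invokes Corollary 8.1.7 of \cite{BTG}) for \eqref{eq:592}. The only discrepancy concerns the final constants: your computation yields $\sin\frac{\pi}{\alpha}$ where the lemma prints $\sin\pi\alpha$ in $A_{\lambda,\alpha}$, and $\Gamma(\frac1\alpha)$ where the lemma prints $\Gamma(1-\frac1\alpha)$ in $B_\alpha$; since the printed $A_{\lambda,\alpha}$ is negative for $\alpha\in(1,2)$, these appear to be typos in the statement rather than errors in your derivation, so your claim that the reflection formula reconciles your $c/\Gamma(\frac1\alpha)$ with the printed $B_\alpha$ should simply be dropped.
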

\begin{proof}
We start with proving \eqref{eq:hit}. According to \cite[Corollary II.5.8]{Bertoin:1996},
\[
\E^x e^{-\lambda \sigma}= \frac{u_\lambda(-x)}{u_\lambda(0)},\quad x\in\mathbb{R},
\]
whence
\begin{multline}\label{eq:asymptot_hitting}
u_\lambda(0) \E^x(1- e^{-\lambda \sigma})=\frac{1}{ \pi } \int_0^\infty\frac{1-\cos(x\theta)}{\lambda+|\theta|^\alpha} {\rm d}\theta=\\ \frac{|x|^{\alpha-1}}{ \pi } \int_0^\infty\frac{1- \cos(y)}{\lambda |x|^\alpha+y^\alpha}{\rm d}y \sim
\frac{|x|^{\alpha-1}}{ \pi } \int_0^\infty\frac{1- \cos(y)}{y^\alpha}{\rm d}y,\quad x\to 0.
\end{multline}
Invoking Lemma \ref{lem:calculating_integrals} we arrive at \eqref{eq:hit}.
Using the first equality in \eqref{eq:asymptot_hitting} and Lemma \ref{lem:calculating_integrals} we infer
\begin{multline*}
\E^1(1-e^{-\lambda\sigma})= \frac{\alpha\sin{\frac\pi\alpha}\lambda^{1-\frac1\alpha}}{\pi} \int_0^\infty \frac{1-\cos \theta}{\lambda+\theta^\alpha}{\rm d}\theta~\sim~\frac{\alpha\sin{\frac\pi\alpha}}{\pi} \int_0^\infty \frac{1-\cos \theta}{\theta^\alpha}{\rm d}\theta\, \lambda^{1-\frac1\alpha}\\= \frac{\sin \pi\alpha \, \sin{\frac{\pi\alpha}2} \Gamma(1-\alpha)}{\pi} \, \lambda^{1-\frac1\alpha},\quad \lambda\to 0+.
\end{multline*}
An application of Corollary 8.1.7 in \cite{BTG} yields
\eqref{eq:592}.
\end{proof}

Lemma \eqref{eq:reg} is the principal ingredient of the proof of Theorem \ref{thm:convergence_resolvents}(a). % in the case $\beta<\alpha-1$.

\begin{lem}\label{eq:reg}
Let $\alpha\in(1,2)$ and $g:\mathbb{R}\to\mathbb{R}$ be a bounded measurable % continuous
function satisfying $g(x)=O(|x|^{\alpha-1})$ as $x\to0$. Assume that the function $x\mapsto \Pb(|\zeta|>x)$ is regularly varying at $+\infty$ of index $-\beta$, $\beta\in (0,\alpha-1)$ and \eqref{eq:c+-} holds. Then
\[
\lim_{n\to\infty}  \frac{\E g(\zeta/n)}{\Pb(\zeta>n)}=\int_{\mathbb{R}}g(x)\eta^\ast({\rm d}x)=\lg \eta^\ast, g \rg<\infty,
\]
where $\eta^\ast$ is the measure defined in \eqref{eq:measure_eta}. % with the same $c_\pm$ as in \eqref{eq:c+-}.
\end{lem}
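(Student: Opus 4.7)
The plan is to decompose $\E g(\zeta/n)$ into contributions from the tail $\{|\zeta|>nM\}$, the near-zero region $\{|\zeta|\le n\delta\}$ and the compact annulus $\{n\delta<|\zeta|\le nM\}$, then to let $\delta\to 0+$, $M\to\infty$ at the end. Finiteness of $\lg \eta^\ast, g\rg$ is immediate: near $0$, the bound $|g(x)|\le C|x|^{\alpha-1}$ combined with the density $|x|^{-(1+\beta)}$ produces an integrand of order $|x|^{\alpha-2-\beta}$, integrable precisely because $\beta<\alpha-1$; at infinity, boundedness of $g$ together with integrability of $|x|^{-(1+\beta)}$ suffices.

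The key analytic input is the vague convergence on $\mbR\setminus\{0\}$ of the scaled laws $\mu_n(\cdot):=\Pb(\zeta/n\in\cdot)/\Pb(\zeta>n)$ to a constant multiple of $\eta^\ast$. For $0<a<b$, regular variation of $x\mapsto \Pb(|\zeta|>x)$ together with \eqref{eq:c+-} yields
\[
\mu_n((a,b])=\frac{\Pb(\zeta>na)-\Pb(\zeta>nb)}{\Pb(\zeta>n)}\longrightarrow a^{-\beta}-b^{-\beta},
\]
and an analogous limit on the negative half-line with prefactor $c_-/c_+$.

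For the tail piece, boundedness of $g$ gives
\[
\frac{\E|g(\zeta/n)|\1_{|\zeta|>nM}}{\Pb(\zeta>n)}\le \|g\|_\infty\,\frac{\Pb(|\zeta|>nM)}{\Pb(\zeta>n)}=O(M^{-\beta}),
\]
which is negligible as $M\to\infty$. For the near-zero piece, the assumption $g(x)=O(|x|^{\alpha-1})$ and integration by parts yield
\[
\E|\zeta|^{\alpha-1}\1_{|\zeta|\le n\delta}=(\alpha-1)\int_0^{n\delta} x^{\alpha-2}\Pb(|\zeta|>x)\,{\rm d}x-(n\delta)^{\alpha-1}\Pb(|\zeta|>n\delta).
\]
Since $x^{\alpha-2}\Pb(|\zeta|>x)$ is regularly varying of index $\alpha-2-\beta>-1$, Karamata's theorem applied to the integral produces the bound $\E|\zeta/n|^{\alpha-1}\1_{|\zeta|\le n\delta}/\Pb(\zeta>n)=O(\delta^{\alpha-1-\beta})$, which tends to $0$ as $\delta\to 0+$. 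Here the hypothesis $\beta<\alpha-1$ is used crucially, both to apply Karamata and to obtain the final decay.

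On the compact annulus $\{\delta\le|y|\le M\}$, vague convergence immediately delivers the desired limit for continuous $g$. The main obstacle is that $g$ is only measurable, so vague convergence does not apply directly. However, since $\eta^\ast$ is absolutely continuous on the annulus, any bounded measurable $g$ can be approximated there in $L^1(\eta^\ast)$ by continuous functions, and the uniform mass bound $\sup_n \mu_n(\text{annulus})<\infty$ propagates the approximation to the measures $\mu_n$. Combining the three estimates and letting $\delta\to 0+$, $M\to\infty$ then completes the proof.
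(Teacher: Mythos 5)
Your decomposition is essentially the paper's: the near-origin piece is controlled by $|g(x)|\le c|x|^{\alpha-1}$ together with Karamata's theorem, with $\beta<\alpha-1$ entering exactly where you use it, and the region bounded away from the origin is treated via the regular-variation limit of the rescaled laws $\mu_n$. The additional outer truncation at $nM$ is redundant (the paper integrates the bounded $g$ over all of $(\ve,\infty)$ at once, since the rescaled measures there have convergent finite total mass), but it is harmless. One bookkeeping point: your vague limit $\mu_n((a,b])\to a^{-\beta}-b^{-\beta}$ identifies the limit measure as $(\beta/c_+)\,\eta^\ast$ rather than $\eta^\ast$ itself; this multiplicative constant is immaterial for the only way the lemma is used (inside the ratio $\lg\eta^\ast,V_\lambda f\rg/\lg\eta^\ast,V_\lambda 1\rg$), and the paper's own computation carries a similar stray factor of $\beta$, but as a standalone statement your argument proves the claimed identity only up to that constant.

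The genuine gap is the annulus step for merely measurable $g$. Approximating $g$ in $L^1(\eta^\ast)$ by a continuous $h$ and invoking $\sup_n\mu_n(\{\delta\le|y|\le M\})<\infty$ does \emph{not} control $\int|g-h|\,{\rm d}\mu_n$: the set where $|g-h|$ is large has small $\eta^\ast$-measure, but it may carry a non-vanishing amount of $\mu_n$-mass, because the $\mu_n$ need not be absolutely continuous. In fact the asserted limit is false for general bounded measurable $g$: take $\zeta$ integer-valued with $\Pb(\zeta=k)\sim ck^{-1-\beta}$ and $g=\1_{\mathbb{Q}\cap[1,2]}$ (which vanishes near $0$, so the hypothesis at the origin holds); then $\lg\eta^\ast,g\rg=0$ while $\E g(\zeta/n)=\Pb(n\le\zeta\le 2n)\sim(1-2^{-\beta})\Pb(\zeta>n)$. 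So no approximation argument can close this step as stated. To be fair, the paper's own proof has the same soft spot: it dismisses continuity of $g$ on the grounds that $\eta^\ast$ is absolutely continuous, which is equally insufficient. The lemma is only valid, and only needed, for $g$ that is continuous (or $\eta^\ast$-a.e.\ continuous), which covers the functions $V_\lambda f$ and $V_\lambda 1$ to which it is applied; with that hypothesis added, your annulus step reduces to the standard portmanteau argument and the rest of your proof is correct.
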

\begin{proof}
The functions $g^+:=\max (g,0)$ and $g^-:=\max (-g,0)$ are nonnegative, bounded and satisfy $g^\pm(x)=O(|x|^{\alpha-1})$ as $x\to 0$. Thus, without loss of generality we can and do assume that $g$ is nonnegative.

Put $G(x):=\Pb(\zeta\leq x)$ for $x\in\mathbb{R}$. We shall show that
\begin{equation}\label{eq:half1}
\lim_{n\to\infty}  \frac{\int_{[0,\,\infty)}g(x){\rm d}G(nx)}{1-G(n)}=c_+\beta \int_0^\infty g(x)x^{-\beta-1}{\rm d}x.
\end{equation}
Fix any $\ve \in(0,1)$. Then
\[
\lim_{n\to\infty}  \frac{\int_{ (\ve,\,\infty)}g(x){\rm d}G(nx)}{1-  G(n)}= c_+\beta \int_\ve^\infty g(x) x^{-\beta-1}{\rm d}x
\]
follows from $$\lim_{n\to\infty}\frac{\Pb(\zeta>nx)}{\Pb(\zeta>n)}=c_+x^{-\beta},\quad x>0.$$ Observe that the usual requirement of continuity of $g$ is not needed here, for the measure $\eta^\ast$ is (absolutely) continuous.

There is a constant $c>0$ such that $g(x)\leq c x^{\alpha-1}$ whenever $x\in (0,1]$. With this at hand we conclude that
\[
\int_{[0,\,\ve]} g(x){\rm d}G(nx)\leq \int_{[0,\,\ve]} c x^{\alpha-1}{\rm d}G(nx)=\frac{c}{n^{\alpha-1}}\int_{[0,\,n\ve]} x^{\alpha-1}{\rm d}G(x).
\]
Further,
\[
\int_{[0,\,n\ve]} x^{\alpha-1} {\rm d} G(x)~\sim~(n\ve)^{\alpha-1}(1-G(n\ve))\frac{\beta}{\alpha-1-\beta}~\sim~ \frac{\beta}{\alpha-1-\beta}\ve^{\alpha-1-\beta}n^{\alpha-1}(1-G(n))
\]
as $n\to\infty$,
where the first asymptotic relation follows from Karamata's theorem \cite[Theorem 1.6.4]{BTG}. We infer
\[
\limsup_{n\to\infty}\int_{[0,\,\ve]} g(x) \frac{{\rm d} G(nx)}{1-G(n)}\leq
  c \frac{\beta}{\alpha-1-\beta}\ve^{\alpha-1-\beta}
\]
and
\[
\limsup_{n\to\infty}\int_{[0,\,\infty)} g(x) \frac{{\rm d} G(nx)}{1-G(n)}\leq
  c \frac{\beta}{\alpha-1-\beta}\ve^{\alpha-1-\beta}+ c_+\beta \int_\ve^\infty g(x) x^{-\beta-1} {\rm d}x.
\]
Sending $\ve\to0+$ we arrive at
\[
\limsup_{n\to\infty}\int_{[0,\infty)} g(x) \frac{{\rm d} G(nx)}{1-G(n)}\leq
    c_+\beta \int_0^\infty g(x) x^{-\beta-1}{\rm d}x.
\]
For the lower bound, write, for any $\ve>0$,
\[
\int_{[0,\,\infty)} g(x) \frac{{\rm d} G(nx)}{1-G(n)}\geq
\int_{(\ve,\,\infty)} g(x) \frac{{\rm d} G(nx)}{1-G(n)}~\to~
   c_+\beta \int_\ve^\infty g(x) x^{-\beta-1}{\rm d}x,\quad n\to\infty.
\]
Sending $\ve \to0+$ we obtain
\[
\liminf_{n\to\infty}\int_{[0,\,\infty)} g(x) \frac{{\rm d} G(nx)}{1-G(n)}\geq
   c_+\beta \int_0^\infty g(x) x^{-\beta-1} {\rm d}x,
\]
and \eqref{eq:half1} follows.

Starting with $$\lim_{n\to\infty}\frac{\Pb(-\zeta>nx)}{\Pb(\zeta>n)}=c_-x^{-\beta},\quad x>0$$ and arguing analogously we also conclude that
$$\int_{(-\infty,\, 0)} g(x) \frac{{\rm d} G(nx)}{1-G(n)}=c_-\beta \int_{-\infty}^0 g(x) |x|^{-\beta-1} {\rm d}x.$$ Combining this with \eqref{eq:half1} completes the proof of the lemma.
\end{proof}

\noindent {\sc Auxiliary results for the case $\beta>\alpha-1$ or $\E|\zeta|<\infty$}.
Lemmas \ref{lem:360} and \ref{lem:361} will be used for the proof of Theorem \ref{thm:convergence_resolvents} (b).
\begin{lem}\label{lem:360}
For $\lambda>0$ and any bounded and continuous function $f:\mathbb{R}\to\mathbb{R}$,
\bel{eq:limit_resolvents}
\lim_{x\to 0}\frac{V_\lambda f(x)}{V_\lambda 1(x)}=
\lambda R^{U_\alpha}_\lambda f(0)=\lambda \int_\mbR u_\lambda(y) f(y){\rm d}y.
\ee
\end{lem}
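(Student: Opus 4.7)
The plan is to reduce the statement to a regularity bound for $R^{U_\alpha}_\lambda f$ near the origin, which in turn follows from an $L^1$-modulus-of-continuity estimate for the resolvent density $u_\lambda$.

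First I would apply \eqref{eq:resolvent_general} to the unkilled process $U_\alpha$ and obtain $V_\lambda f(x)=R^{U_\alpha}_\lambda f(x)-\E^x e^{-\lambda\sigma}R^{U_\alpha}_\lambda f(0)$; the special case $f\equiv 1$ gives $\E^x e^{-\lambda\sigma}=1-\lambda V_\lambda 1(x)$. Substituting the latter into the former and rearranging produces
\be
\frac{V_\lambda f(x)}{V_\lambda 1(x)}=\frac{R^{U_\alpha}_\lambda f(x)-R^{U_\alpha}_\lambda f(0)}{V_\lambda 1(x)}+\lambda R^{U_\alpha}_\lambda f(0),
\ee
so the lemma reduces to verifying that the first term on the right vanishes as $x\to 0$.

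By Lemma \ref{lem:asym}, $V_\lambda 1(x)\sim \lambda^{-1}A_{\lambda,\alpha}|x|^{\alpha-1}$ as $x\to 0$, so it is enough to show $R^{U_\alpha}_\lambda f(x)-R^{U_\alpha}_\lambda f(0)=o(|x|^{\alpha-1})$. Writing $R^{U_\alpha}_\lambda f$ as the convolution of the bounded function $f$ with $u_\lambda$, I would use
\[
|R^{U_\alpha}_\lambda f(x)-R^{U_\alpha}_\lambda f(0)|\le \|f\|_\infty\,\|u_\lambda(\cdot-x)-u_\lambda\|_{L^1(\mbR)},
\]
and aim for the sharper bound $\|u_\lambda(\cdot-x)-u_\lambda\|_{L^1(\mbR)}=O(|x|)$ as $x\to 0$, which is strictly stronger than the required $o(|x|^{\alpha-1})$ because $\alpha-1<1$.

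To establish that $L^1$ bound, I would use the representation $u_\lambda(y)=\int_0^\infty e^{-\lambda t}p_t(y)\,{\rm d}t$, the self-similarity $p_t(y)=t^{-1/\alpha}p_1(yt^{-1/\alpha})$ of the $\alpha$-stable density, and the Schwartz-class smoothness of $p_1$ to get $\|p_t(\cdot-x)-p_t\|_{L^1}=\|p_1(\cdot-xt^{-1/\alpha})-p_1\|_{L^1}\le C\min(|x|t^{-1/\alpha},2)$. Integrating against $e^{-\lambda t}\,{\rm d}t$ and splitting the range at $t=|x|^\alpha$ yields an $O(|x|^\alpha)+O(|x|)=O(|x|)$ bound; here $\alpha>1$ enters crucially via the integrability of $t^{-1/\alpha}e^{-\lambda t}$ at $0$, while $\alpha<2$ ensures $|x|/|x|^{\alpha-1}=|x|^{2-\alpha}\to 0$. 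The main obstacle is precisely this $L^1$-modulus-of-continuity estimate for $u_\lambda$, which is where the two-sided restriction $\alpha\in(1,2)$ is essential; the whole argument then closes automatically without even requiring continuity of $f$.
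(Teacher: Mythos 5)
Your argument is correct, but it takes a genuinely different route from the paper's. The paper proves the lemma probabilistically: it identifies $\frac{V_\lambda f(x)}{\lambda V_\lambda 1(x)}$ with $\E\int_0^\infty e^{-\lambda t}f(X_x(t))\,{\rm d}t$ for the auxiliary process $X_x=U_\alpha+xN_\alpha(x,\cdot)$ that restarts at $x$ upon each visit to $0$, and then shows $xN_\alpha(x,t)\pto 0$ via self-similarity, the tail asymptotics \eqref{eq:592} and a Meerschaert--Scheffler limit theorem for the counting process of a stable subordinator; continuity of $f$ is what makes the final dominated-convergence step work there. You instead make the purely analytic reduction $\frac{V_\lambda f(x)}{V_\lambda 1(x)}=\frac{R_\lambda f(x)-R_\lambda f(0)}{V_\lambda 1(x)}+\lambda R_\lambda f(0)$ (a correct consequence of \eqref{eq:resolvent_general}) and then beat the denominator's order $|x|^{\alpha-1}$ from \eqref{eq:hit} by an $L^1$-modulus-of-continuity bound $\|u_\lambda(\cdot-x)-u_\lambda\|_{L^1}=O(|x|)$. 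This buys something: you avoid the external renewal-theoretic input, you get a rate $O(|x|^{2-\alpha})$ for the convergence, and continuity of $f$ is indeed not needed. One cosmetic correction: $p_1$ is \emph{not} in the Schwartz class for $\alpha\in(1,2)$ (it has heavy tails $p_1(y)\asymp|y|^{-1-\alpha}$); what your estimate actually uses, and what is true, is that $p_1\in C^1$ with $p_1'\in L^1$ (the derivative decays like $|y|^{-2-\alpha}$), which gives $\|p_1(\cdot-h)-p_1\|_{L^1}\le\min(|h|\,\|p_1'\|_{L^1},\,2)$; with that rephrasing the splitting of $\int_0^\infty e^{-\lambda t}\min(C|x|t^{-1/\alpha},2)\,{\rm d}t$ at $t=|x|^\alpha$ goes through exactly as you say, using $1/\alpha<1$ for integrability near $t=0$.
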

\begin{remk}\label{remk:360}
%This Lemma yields \cite[Theorem 4.2, Section V.4]{Blumenthal2012excursions} that there is a unique recurrent extension
%    for the minimal process generated by $U_\alpha $ with no sojourn at $\{0\}.$ Therefore this extension is $U_\alpha$ itself. More details on entrance law, excursion theory, recurrent extensions of Markov processes see in \S\ref{sec2}.
According to this lemma and \cite[Theorem 4.2, Section V.4]{Blumenthal2012excursions} there exists a unique recurrent extension of the process $U_\alpha$ killed at $0$ which has a zero sojourn at $0$. As a consequence, this extension coincides with $U_\alpha$. More details on the excursion theory, recurrent extensions of Markov processes etc.\ will be given in Section  \ref{sec2}.
\end{remk}
\begin{proof}
For each $x\in\mathbb{R}$, denote by $X_x$ the process defined in \eqref{eq:def_of_X} in which we formally replace $\zeta$ with $x$. For $k\in \mbN$, denote by $\sigma_k^{(x)}$ the time of the $k$th jump (of size $x$) from $0$ and
note that the random variables $\sigma_1^{(x)}$, $\sigma_2^{(x)},\ldots$ are independent and identically distributed. We shall use a representation
\[
X_x(t)=U_\alpha(t)+ x N_\alpha(x,t),\quad t\geq 0,~~x\in\mathbb{R},
\]
where $N_\alpha(x,t)=\#\{k\in\mbN\ :\ \sigma_1^{(x)}+\ldots+ \sigma_k^{(x)}\leq t\}$.

%Consider the process $X_x$ defined in \eqref{eq:def_of_X} with $\zeta:=x $ and $X_x(0)=x.$
%
%For $k\in \mbN,$ denote by $\sigma_k^{(x)}$ the time of the $k$th jump from 0 and
%note that the random variables $\sigma_1^{(x)}, \sigma_2^{(x)},...$ are i.i.d.

Arguing as in \eqref{eq:resolvent_general} and \eqref{eq:resolvent_equ} we conclude that
%Similarly to calculations from \eqref{eq:resolvent_general}, \eqref{eq:resolvent_equ} we get
\[
\frac{V_\lambda f(x)}{\lambda V_\lambda 1(x)}= \E\int_0^\infty e^{-\lambda t}
f(X_x(t)){\rm d}t,\quad x\in\mathbb{R}.
\]
In view of this and the Lebesgue dominated convergence theorem, we are left with showing that
%Hence, to prove the Lemma it suffices to show that
\bel{eq:765}
x N_\alpha(x,t)\overset{\Pb}\to 0,\quad x\to 0.
\ee

To prove \eqref{eq:765}, recall that the process $U_\alpha$ is self-similar of index $1/\alpha$. This implies that, for $k\in \mbN$, $\sigma^{(x)}_k$ has the same distribution as $\sigma^{(1)}_k |x|^\alpha$ and thereupon $N_\alpha(x,t)$ has the same distribution as $\#\{k\in\mbN\ : \ \sigma_1^{(1)}+\ldots+ \sigma_k^{(1)}\leq t|x|^{-\alpha}\}$. By \cite[Theorem 3.2]{MeerschaertShceffler2004},
\[
\Pb^1(\sigma>t|x|^{-\alpha}) \, N_\alpha(x,t)\overset{{\rm d}}\to S_{1-\frac{1}{\alpha}}^\leftarrow (1):=\inf\{ t\geq 0\ :\  S_{1-\frac{1}{\alpha}}(t)>1\},\quad x\to 0,
\]
where $\overset{{\rm d}}\to$ denotes convergence of one-dimensional distributions, and $( S_{1-\frac{1}{\alpha}}(t))_{t\geq 0}$ is a drift-free $(1-\frac{1}{\alpha})$-stable subordinator with
\[
-\log \E e^{-u S_{1-\frac{1}{\alpha}}(1)}=\Gamma(1/\alpha) u^{1-\frac{1}{\alpha}},\quad u\geq 0.
\]
Since, by \eqref{eq:592}, $P^1(\sigma>t|x|^{-\alpha})\sim B_\alpha t^{-1+\frac1\alpha}|x|^{\alpha-1}$ as $x\to 0$, we infer \eqref{eq:765}.
\end{proof}
\begin{lem}\label{lem:361}
%Let functions $a$ and $b$ be such that $a$ is positive, there is a constant $C>0$
%such that $|b(x)|\leq C a(x), x\in\mbR$, and there is a limit $\lim_{x\to0}\frac{b(x)}{a(x)}=A.$
%
%Suppose that a sequence $\{\mu_n\}$ of probability measures   on $\mbR$ with no mass
% at 0 is such that for any $\delta>0:$
%\bel{eq:367}
% \lim_{n\to\infty}\frac{\int_{|x|\geq \delta} a(x)\mu_n(dx)}{\int_{|x|< \delta} a(x)\mu_n(dx)}=0.
%\ee
%Then
%\[
% \lim_{n\to\infty}\frac{\int_{\mbR} b(x)\mu_n(dx)}{\int_{\mbR} a(x)\mu_n(dx)}=A.
%\]
Let $a:\mathbb{R}\to [0,\infty)$ and $b:\mathbb{R}\to\mathbb{R}$ be functions such that $|b(x)|\leq C a(x)$ for all $x\in\mbR$ and some constant $C>0$ and that $\lim_{x\to0}\frac{b(x)}{a(x)}=A\in \mathbb{R}$. Assume that a sequence $(\mu_n)_{n\geq 1}$ of probability measures satisfies $\mu_n(\{0\})=0$ for large $n$ and, for each $\delta>0$,
\bel{eq:367}
 \lim_{n\to\infty}\frac{\int_{|x|\geq \delta} a(x)\mu_n({\rm d}x)}{\int_{|x|< \delta} a(x)\mu_n({\rm d}x)}=0.
\ee
Then
\[
 \lim_{n\to\infty}\frac{\int_{\mbR} b(x)\mu_n({\rm d}x)}{\int_{\mbR} a(x)\mu_n({\rm d}x)}=A.
\]
\end{lem}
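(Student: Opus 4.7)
The plan is a standard ``split-and-truncate'' $\varepsilon$-argument, exploiting the two assumptions at complementary scales: the local information $b(x)/a(x)\to A$ controls what happens near $0$, while hypothesis \eqref{eq:367} says that the measures $\mu_n$ concentrate (in an $a$-weighted sense) arbitrarily close to $0$, so that the part of the integrals coming from $\{|x|\ge\delta\}$ is asymptotically negligible in the denominator.

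First I would fix $\varepsilon>0$ and choose $\delta=\delta(\varepsilon)>0$ such that $|b(x)-A\,a(x)|\le \varepsilon\, a(x)$ for all $|x|<\delta$; this is possible by the hypothesis $\lim_{x\to 0}b(x)/a(x)=A$ (note $a(x)\ge 0$, and the ratio is defined wherever $a(x)>0$; where $a(x)=0$ we also have $b(x)=0$ by $|b|\le Ca$, so the bound holds trivially). Next I would write
\[
\int_{\mbR} b(x)\,\mu_n({\rm d}x) - A\int_{\mbR} a(x)\,\mu_n({\rm d}x)
= I_n^{<}+I_n^{\ge},
\]
where $I_n^{<}:=\int_{|x|<\delta}(b(x)-Aa(x))\,\mu_n({\rm d}x)$ and $I_n^{\ge}:=\int_{|x|\ge \delta}(b(x)-Aa(x))\,\mu_n({\rm d}x)$. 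By the choice of $\delta$,
\[
|I_n^{<}|\le \varepsilon \int_{|x|<\delta} a(x)\,\mu_n({\rm d}x)\le \varepsilon \int_{\mbR} a(x)\,\mu_n({\rm d}x),
\]
and by $|b|\le Ca$,
\[
|I_n^{\ge}|\le (C+|A|)\int_{|x|\ge \delta} a(x)\,\mu_n({\rm d}x).
\]

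The final step is to divide by $\int_{\mbR}a(x)\,\mu_n({\rm d}x)$ and exploit \eqref{eq:367}. For $n$ large, $\mu_n(\{0\})=0$ and \eqref{eq:367} forces $\int_{|x|<\delta}a(x)\,\mu_n({\rm d}x)>0$, so the denominator is strictly positive. Bounding $\int_{\mbR}a(x)\,\mu_n({\rm d}x)\ge \int_{|x|<\delta}a(x)\,\mu_n({\rm d}x)$ in the ``tail'' term yields
\[
\left|\frac{\int_{\mbR} b(x)\,\mu_n({\rm d}x)}{\int_{\mbR} a(x)\,\mu_n({\rm d}x)}-A\right|
\le \varepsilon + (C+|A|)\,\frac{\int_{|x|\ge \delta} a(x)\,\mu_n({\rm d}x)}{\int_{|x|<\delta} a(x)\,\mu_n({\rm d}x)}.
\]
By \eqref{eq:367} the second summand tends to $0$ as $n\to\infty$, so the $\limsup$ of the left-hand side is at most $\varepsilon$. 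Letting $\varepsilon\to 0+$ completes the proof.

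There is no serious obstacle here; the only mild subtlety is the bookkeeping around the case when $a$ vanishes (handled by $|b|\le Ca$) and ensuring the denominator is positive for large $n$ (ensured by $\mu_n(\{0\})=0$ together with \eqref{eq:367}).
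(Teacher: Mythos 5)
Your proof is correct and follows essentially the same route as the paper's: choose $\delta$ so that $b/a$ is within $\ve$ of $A$ on $\{|x|<\delta\}$, show the contribution of $\{|x|\ge\delta\}$ is negligible via \eqref{eq:367} together with $|b|\le Ca$, and let $\ve\to 0$. The only (cosmetic) difference is that you bound the absolute deviation $\bigl|\int b\,{\rm d}\mu_n - A\int a\,{\rm d}\mu_n\bigr|$ directly, whereas the paper treats the $\limsup$ and $\liminf$ separately; your handling of the points where $a$ vanishes and of the positivity of the denominator is a welcome extra bit of care.
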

\begin{proof}
Let $\ve>0$ be arbitrary. Select $\delta>0$ such that $|\frac{b(x)}{a(x)}-A|<\ve$ whenever $|x|\leq \delta$.
Then
\begin{multline*}
\limsup_{n\to\infty}\frac{\int_{\mbR} b(x)\mu_n({\rm d}x)}{\int_{\mbR} a(x)\mu_n({\rm d}x)}=\limsup_{n\to\infty}
\frac{\frac{(\int_{|x|\geq \delta}+\int_{|x|< \delta}) b(x)\mu_n({\rm d}x)}{\int_{|x|<\delta} a(x)\mu_n({\rm d}x)}}{\frac{
(\int_{|x|\geq \delta}+\int_{|x|< \delta})
 a(x)\mu_n({\rm d}x)}{\int_{|x|<\delta} a(x)\mu_n({\rm d}x)}}=\limsup_{n\to\infty}
\frac{\frac{\int_{|x|< \delta} b(x)\mu_n({\rm d}x)}{\int_{|x|<\delta} a(x)\mu_n({\rm d}x)}}{\frac{
\int_{|x|< \delta}
 a(x)\mu_n({\rm d}x)}{\int_{|x|<\delta} a(x)\mu_n({\rm d}x)}}=
\\
\limsup_{n\to\infty} \frac{\int_{|x|<\delta} b(x)\mu_n({\rm d}x)}{\int_{|x|<\delta} a(x)\mu_n({\rm d}x)}
\leq \limsup_{n\to\infty} \frac{(A+\ve)\int_{|x|<\delta} a(x)\mu_n({\rm d}x)}{\int_{|x|<\delta} a(x)\mu_n({\rm d}x)}= A+\ve
\end{multline*}
having utilized for the second equality $$\lim_{n\to\infty}\frac{\int_{|x|\geq \delta} b(x)\mu_n({\rm d}x)}{\int_{|x|< \delta} a(x)\mu_n({\rm d}x)}=0$$ which is a consequence of \eqref{eq:367} and the assumption $|b(x)|\leq Ca(x)$ for all $x\in\mathbb{R}$.
An analogous inequality for the lower limit follows similarly. Sending $\ve\to 0$ completes the proof of the lemma.
\end{proof}

\subsection{Proof of Theorem \ref{thm:convergence_resolvents}}

We first prove the theorem in the case $\beta<\alpha-1$ (part (a) of the theorem). In view of \eqref{eq:resol} we have to show that
\begin{equation}\label{eq:inter}
\lim_{n\to\infty}\frac{f(0)/m_n+\lg  P_{\zeta/n}, V_\lambda f\rg}{1/m_n+\lg  P_{\zeta/n}, V_\lambda 1\rg}=\frac{\lg  \eta^\ast, V_\lambda f\rg}{\lg  \eta^\ast, V_\lambda 1\rg}.
\end{equation}

The function $V_\lambda 1$ is nonnegative and bounded (by $\lambda^{-1}$). According to \eqref{eq:hit}, it satisfies $V_\lambda 1(x)=O(|x|^{\alpha-1})$ as $x\to0$. By virtue of
\begin{equation}\label{eq:bound}
|V_\lambda f(x)|\leq \sup_{y\in\mathbb{R}}|f(y)| V_\lambda 1(x),\quad x\in\mathbb{R}
\end{equation}
and the fact that $f$ is bounded by assumption, we conclude that $V_\lambda f$ is a bounded function satisfying $V_\lambda f(x)=O(|x|^{\alpha-1})$ as $x\to0$. Hence, an application of Lemma \eqref{eq:reg} yields
$$\lg  P_{\zeta/n}, V_\lambda f \rg ~\sim~ \Pb(\zeta>n)\lg  \eta^\ast, V_\lambda f\rg\quad\text{and}\quad \lg  P_{\zeta/n}, V_\lambda 1 \rg~\sim~\Pb(\zeta>n)\lg  \eta^\ast, V_\lambda 1\rg,\quad n\to\infty.$$ These together with our choice of $m_n$ prove \eqref{eq:inter}.

Now we are turning to the proof of part (b). Formula \eqref{eq:resol} tells us that we have to prove that
\begin{equation}\label{eq:inter1}
\lim_{n\to\infty}\frac{f(0)/m_n+\lg  P_{\zeta/n}, V_\lambda f\rg}{1/m_n+\lg  P_{\zeta/n}, V_\lambda 1\rg}=\lambda R_\lambda^{U_\alpha} f(0).
\end{equation}

In the notation of Lemma \ref{lem:361}, put $a:=V_\lambda 1$, $b:=V_\lambda f$ and $\mu_n:=P_{\zeta/n}$ for $n\geq 1$. Then, in view of \eqref{eq:bound}, we may put $C:=\sup_{y\in\mathbb{R}}|f(y)|$ and, by Lemma \ref{lem:360}, $A:=\lambda R^{U_\alpha}_\lambda f(0)$. Now relation \eqref{eq:inter1} follows from Lemma \ref{lem:361} and our choice of $m_n$ if we can show that
\begin{equation}\label{eq:inter100}
\lim_{n\to\infty}m_n \lg  P_{\zeta/n}, V_\lambda 1\rg=\infty
\end{equation}
and that, for any $\delta>0$,
\begin{equation}\label{eq:367a}
\lim_{n\to\infty}\frac{\int_{|x|\geq \delta} \E^x (1-e^{-\lambda \sigma})P_{\zeta/n}({\rm d}x)}{\int_{|x|< \delta} \E^x (1-e^{-\lambda \sigma})P_{\zeta/n}({\rm d}x)}=0
\end{equation}
which is condition \eqref{eq:367} with $a(x)=V_\lambda 1(x)=\lambda^{-1}\E^x (1-e^{-\lambda \sigma})$ for $x\in\mathbb{R}$.

Fix any $\delta>0$. On the one hand, $$\int_{|x|\geq \delta} \E^x (1-e^{-\lambda \sigma})P_{\zeta/n}({\rm d}x)\leq \Pb(|\zeta|\geq n\delta)=o(n^{1-\alpha}),\quad n\to\infty$$ which holds true if $\E|\zeta|<\infty$ or $\beta>\alpha-1$. On the other hand, appealing to \eqref{eq:hit} we conclude that, for appropriate constant $K>0$,
\begin{multline}\label{eq:1000}
\int_{|x|<\delta} \E^x(1- e^{-\lambda \sigma})P_{\zeta/n}({\rm d}x)\geq K \int_{|x|<\delta} |x|^{\alpha-1} P_{\zeta/n}({\rm }x)= K n^{1-\alpha }\int_{|x|<n\delta} |x|^{\alpha-1} \Pb_\zeta({\rm d}x)\\~\sim~ K \E(|\zeta|^{\alpha-1})n^{1-\alpha},\quad n\to\infty.
\end{multline}
Since $\E(|\zeta|^{\alpha-1})<\infty$ if $\E|\zeta|<\infty$ or $\beta>\alpha-1$, \eqref{eq:367a} follows. Finally, \eqref{eq:inter100} is a consequence of \eqref{eq:1000} and our assumption $\lim_{n\to\infty}m_n n^{1-\alpha}=\infty$.

The proof of Theorem \ref{thm:convergence_resolvents} is complete.

\section{It\^{o}'s excursion theory} \label{sec2}

According to Corollary \ref{corl:conv_proc}, the processes $X_{\zeta/n}$ converge in distribution. If $\beta>\alpha-1$ or $\E|\zeta|<\infty$, the limit process is $U_\alpha$ and, as such, well-understood. If $\beta<\alpha-1$, we only have a resolvent description of the
limit process that we call a skew $\alpha$-stable L\'{e}vy process. In this section, we give a probabilistic representation of this process via It\^{o}'s excursion theory and an SDE, which include a local time of the process.  There are several definitions of a local time of a Markov process. Usually these produce the same process, up to a multiplicative constant. To provide an absolutely rigorous formulation of our results, we should stick to a particular definition that serves our needs. As far as the skew stable L\'{e}vy process is concerned, the exact values of constants are as important as the parameter of permeability appearing in the defining equation \eqref{eq:SBM} for the SBM.

%To keep the presentation as precise as possible,
We briefly review below some basic facts of It\^{o}'s excursion theory for Markov processes. We only discuss real-valued processes and leave aside processes taking values in the other spaces. We follow Blumenthal's book \cite{Blumenthal2012excursions} and cite specific theorems or point out pages for the most  important results. Let $(X, P^x)$, $x\in\mathbb{R}$ be a Feller process, that is, a Markov process whose transition semigroup $(P_t)_{t\geq 0}$ is strongly continuous on $C_0(\mbR)$. Without loss of generality we always assume that all processes to be considered are standard, see the corresponding definition in \cite[Chapter I, \S 9]{BlumentalGetoor}.% {\tt PAGES???}
 Assume that $X$ is recurrent at $0$, that is, $P^x(\sigma<\infty)=1$ for all $x\in\mbR$, where, as before, $\sigma=\sigma(X)$ denotes the first hitting time of $0$.

%Let $\sigma:=\inf\{t>0\ :\ X(t)=0\}$ denote the first hitting time of $0$ and % Denote the hitting time  $\sigma:=\inf\{t>0\ :\ X(t)=0\}$ and
%assume that $X$ is recurrent at 0, i.e.,
%
%$$\textit{$P^x(\sigma<\infty)=1,\ x\in\mbR.$}$$

Associated with $(X, P^x)$ are

\noindent (i) $\bar X(t):=X(t\wedge \sigma)$, $t\geq 0$ the process stopped at $0$ (we denote by $\bar P^x$ and $\bar P_t$ its distribution and semigroup, respectively), and

\noindent (ii) the process killed at $0$ with the semigroup $P^0_t$
and the transition probabilities $P^0(t,x,A)=P(X(t)\in A, \ t<\sigma)$ for $t\geq 0$ and $x\in\mathbb{R}$.

\noindent The killed process and its semigroup will be called {\it minimal process} and {\it minimal semigroup}, respectively.

Below we recall elements of It\^{o}'s synthesis theory, which describes all recurrent extensions of the minimal process.

%The aim of this section is to recall the Ito's synthesis theory, which describes all
%recurrent extensions of the minimal process.

Assume that

\textit{ $$\text{the function x}~\mapsto \bar\E^x e^{-\lambda\sigma}~\text{is continuous for each}~ \lambda>0~\text{and} \lim_{|x|\to\infty} \bar \E^x e^{-\sigma}=0.$$}

%We assume that
%
%
%\textit{ $$\bar\E^x e^{-\lambda\sigma}~\text{is continuous in}~ x~\text{for any}~ \lambda>0~\text{and that} \lim_{|x|\to\infty} \bar \E^x e^{-\sigma}=0.$$}

\begin{remk}
All the properties of a minimal semigroup, stated above and below,
are satisfied for the minimal semigroup
of a symmetric $\alpha$-stable L\'{e}vy process with $\alpha\in(1,2)$.
\end{remk}
% With each function $u \in  D([0,\infty))$ we associate
%a value $\sigma(u):=\inf\{t>0 : u(t)=0\}.$ We will say that $\sigma(u)$
% is a length of excursion $u.$

To each function $u \in  D([0,\infty))$, we associate $\sigma(u):=\inf\{t>0 : u(t)=0\}$. Let $\hat \Pb$  be a $\sigma$-finite measure on $D$ supported by the set of functions $\{ u \ : \ u(t)=0, t\geq \sigma(u)\}$. The elements of this set will be called {\it excursions} and $\sigma(u)$ will be called the {\it length of excursion $u$}. We shall assume that
\[
\hat \Pb (1-e^{-\sigma})\leq 1
\]
and that $\hat P ( |u(0)|>x) <\infty$ for $x>0$.

%Let $\hat \Pb$  be a $\sigma$-finite measure on $D$ supported by the set of functions
%$\{ u \ : \ u(t)=0, t\geq \sigma(u)\}$.
%Elements of this set will be called excursions and
% $\sigma(u)$ will be called the length of excursion $u$.

%Let $\hat \Pb$  be a $\sigma$-finite measure on $D([0,\infty))$   that is carried
%on the set of functions $\{ u \ : \ u(t)=0, t\geq \sigma(u)\}  $ such that
%\[
%\hat \Pb (1-e^{-\sigma})\leq 1
%\]
%and
%$\hat P ( |u(0)|>x) <\infty, x>0.$
%Let $\hat \Pb$  be a $\sigma$-finite measure on
% $D([0,\infty))$ supported by the set of functions
%$\{ u \ : \ u(t)=0, t\geq \sigma(u)\}$.
%Elements of this set will be called excursions and
% $\sigma(u)$ will be called the length of excursion $u$.

%It is assumed that
%\[
%\hat \Pb (1-e^{-\sigma})\leq 1
%\]
%and
%$\hat P ( |u(0)|>x) <\infty, x>0.$

Let $N({\rm d}s, {\rm d}u)$ denote a Poisson point measure on $[0,\infty)\times D$ with intensity ${\rm d}s\times \hat \Pb({\rm d}u)$. Denote by $(s_k, u_k)$ the atoms of $N$, that is, $N=\sum_k\delta_{(s_k, u_k)}$. Put $m:=1- \hat \Pb (1-e^{-\sigma})$,
\begin{equation*}
\tau(s):=ms+\sum_{s_k\leq s} \sigma(u_k)=
ms+\int_{[0,\,s]}\int_D \sigma(u) N({\rm d}z, {\rm d}u),\quad s\geq 0
\end{equation*}
and
\begin{equation}\label{eq:inverse}
\vf(t):= \inf\{s\geq 0\  :\ \tau(s)> t\},\quad t\geq 0.
\end{equation}
Assume that at least one of the following conditions holds:

\textit{ \centerline{ $m>0$ or the measure $\hat \Pb $ is infinite.}}
Then $(\tau(s))_{s\geq 0}$ is a strictly increasing subordinator. For  $t\in [\tau(s_k-), \tau(s_k)]$, put $X(t):= u_k(t-\tau(s_k-))$ and, for $t\notin \cup_k[\tau(s_k-), \tau(s_k)]$, put $X(t):=0$.
%  Further we assume that characteristic measure
% $\hat P$ \textit{ is compatible with  with the minimal semigroup,}
% i.e.,

% if

Further, we assume that the characteristic measure $\hat P$ is {\it compatible with the minimal semigroup}. This means that, for all $n\geq 1$, all $0\leq s_1\leq\ldots\leq s_n\leq s$ and all bounded measurable functions $g, g_1,\ldots,g_n$ with  $g(0)=0$,
\[
\hat P\Big(g(u(t+s))g_1(u(s_1))\ldots g_n(u(s_n)); \sigma>s\Big) =
 \hat P\Big(P^0_t g(u(s))g_1(u(s_1))\ldots g_n(u(s_n))
; \sigma>s\Big),
\]
and
\[
\hat P\Big(g(u(t)) ; u(0)\in B\Big) = \hat P\Big(P^0_t g(u(0)) ; u(0)\in B\Big),
\]
(see \cite[Chapter V \S 1 and V \S 2 (d)]{Blumenthal2012excursions} for more details). Then, according to \cite[Theorem 2.10, Chapter V \S2]{Blumenthal2012excursions}), $X$ is a Markov extension of the minimal process with $X(0)=0$. Moreover, each extension (starting at $0$) can be obtained with the help of merging excursions, under suitable measure $\hat P$ satisfying the compatibility conditions. An extension has a zero sojourn at $0$ if, and only if, $m=0$.

\begin{remk}
If $X(0)\neq 0$, then the behavior of $X$ until hitting $0$ is determined uniquely by the minimal semigroup or the
semigroup corresponding to the stopped process.
 \end{remk}
Note that the measure $\hat P$ is finite if, and only if, $\hat P=\bar P^\theta$ for some finite measure $\theta$ on $\mbR\setminus\{0\}$. In this case, $X$ is a holding and jumping process, with holding time $\|\theta\|$ and jumping distribution $\frac{\theta}{\|\theta\|}$, where $\|\theta\|=\theta(\mbR\setminus\{0\})$.

If the measure $\hat P$ is infinite, then $X$ can be obtained as an a.s.\ limit of holding and jumping processes.
To explain the construction, recall that \cite[Chapter 5.2]{Blumenthal2012excursions}
a family of $\sigma$-finite measures $(\theta_t)_{t>0}$ on the Borel subsets of $\mbR\setminus\{0\}$
is called {\it entrance law} for $P^0_t$, if
\[
\theta_s P_t^0 =\theta_{t+s}, t\geq 0,\quad s>0
\]
and
\[
\lg\theta_s, V 1\rg = \E^{\theta_s}(1-e^{-\sigma})\leq 1,\quad s>0.
\]
If $\theta$ is a $\sigma$-finite measure on $\mbR\setminus \{0\}$ such that
$\E^{\theta}(1-e^{-\sigma})\leq1$, then $\theta_t:=\theta P_t^0$ is an entrance law. Another important  example of an entrance law is $\hat \theta_t(A):=\hat \Pb(X_t\in A,\ t<\sigma)$.

Consider a family of holding and jumping processes $(X_\ve)_{\ve>0}$ with jumping distribution
$\frac{\hat\eta_\ve}{\|\hat\eta_\ve\|}$ and holding parameter
$m+\delta_\ve$, where $\delta_\ve>0$ is any function satisfying $\delta_\ve=o(\ve)$ as $\ve\to0$. It follows from the proof of \cite[Theorem 2.10, Chapter 5.2]{Blumenthal2012excursions} that the corresponding resolvents converge uniformly.
By Theorem \ref{thm:weak_convergenceEK}, this entails the distributional convergence $X_\ve\Rightarrow X$ on $D$ as $\ve\to0$. The aforementioned proof contains an explicit construction of $X_\ve$, which includes merging of excursions of the length larger than $\ve>0$, and such that
$
\lim_{\ve\to 0} X_\ve(t)= X(t)\quad \mbox{a.s.}
$

%Moreover, there was  a concrete construction of $X_\ve$ that uses piecing
% together excursions whose lengths exceeds $\ve>0$ and such that
%$
%\lim_{\ve\to0} X_\ve(t)= X(t) \ \ \mbox{a.s.}
%$

Note also that, for any entrance law $(\theta_t)$, there exists a unique characteristic measure $\hat P$ satisfying  $\theta_t=\hat \theta_t, t>0,$ see \cite[Theorem 4.7, Chapter V]{Blumenthal2012excursions}.
Moreover, the characteristic measure $\hat P$ can be recovered from the process $X$, see
\cite[Chapter III]{Blumenthal2012excursions}, and the process $(\vf(t))_{t\geq 0}$ defined in \eqref{eq:inverse} is the Blumenthal-Getoor local time of $X$ at $0$, see \cite[Theorem 2.3, Chapter V \S 2]{Blumenthal2012excursions}, that is, a continuous additive
functional which satisfies
\[
\E^x e^{-\sigma} = \E^x\int_0^\infty e^{-t} {\rm d}\vf(t),\quad x\in\mathbb{R}.
\]
Existence, uniqueness and some other properties of the local time are discussed in \cite[p. 91-93]{Blumenthal2012excursions}.
Therefore, under some natural properties imposed on the minimal semigroup, there is a one-to-one correspondence between the recurrent extension of the minimal process, the characteristic measure $\hat P$ and the entrance law $(\theta_t)$. We stress that the analysis of the entrance law is simpler than that of the two other objects.

Any entrance law for $P^0_t$ can be uniquely decomposed as the sum of two entrance laws
\[
\theta_t=\rho_t+ \theta P_t^0,\quad t>0,
\]
where $\theta$ is a $\sigma$-finite measure, and the measure
$\rho_t$ satisfies
\[
\lim_{t\to0+}\rho_t(\mbR\setminus [-x,x])=0
\]
for any $x>0,$ see \cite[p. 140]{Blumenthal2012excursions}.
These entrance laws, which are called {\it continuous entrance} and {\it jump entrance}, respectively, can also be characterized as follows:
\[
\theta_t^c(\cdot):=\rho_t=\hat P(X(s)\in \cdot,\ X(0)=0, \ s<\sigma),
\]
\[
\theta_t^j(\cdot): =\hat P(X(s)\in \cdot,\ X(0)\neq 0, \ s<\sigma),
\]
see \cite[p. 156]{Blumenthal2012excursions}. It follows from Theorems 2.6, 2.8 and the proof of Theorem 2.10 from \cite[Chapter V]{Blumenthal2012excursions} that if an entrance law is a jump entrance law, that is, $\theta_t
=\theta P_t^0,$ where $\theta$  is an infinite measure satisfying $\E^\theta(1-e^{-\sigma})=1$, then the corresponding recurrent extension $X$ has a zero sojourn at $0$, its resolvent satisfies
\bel{eq:Resolvent_at0}
\lambda R_\lambda f(0)=\frac{\lg  \theta, V_\lambda f\rg}{\lg  \theta, V_\lambda 1\rg},
\ee
the corresponding characteristic measure $\hat P$ is equal to $\bar P^\theta$.

% We see that the distribution of $X$ is uniquely determined by the minimal semigroup and the
% entrance law $\{\eta_t\}$. Hence we will call $\{\eta_t\}$ the entrance law of $X.$

% \begin{remk}
% It follows from \cite[Theorems 2.6 and 2.8, Chapter 5.2]{Blumenthal2012excursions} the following general fact. For any entrance law  $\{\eta_t\}$  and  holding intensities
%  $\alpha_\ve$ with
% $\lim_{\ve\to0}{\alpha_\ve}\in(0,\infty],$ resolvents of the corresponding holding and jumping
% processes $\{X_\ve\}$  with jumping in distribution $\frac{\eta_\ve}{\|\eta_\ve\|}$  converges
% (in the sense of Theorem \ref{thm:weak_convergenceEK})   to a resolvent of a strongly continuous
%  semigroup on $C_0(\mbR)$.
%  \end{remk}

%Let us make the final observation from the above reasoning. Assume that there is a unique continuous
%entrance law  $\{\theta_t\}$ with the characteristic measure $\hat P^c$ such that
%$\hat P^c(1-e^{-\sigma})=1.$ For example, this happens for the process $U_\alpha.$
%Then  any entrance law $\{\eta_t\}$ corresponding to an extension with zero sojourn at 0
% can be uniquely represented in the form
% \bel{eq:entrance-general_decomposition}
% \eta_t=(1-p)\theta_t+ p\,\eta P_t^0
% \ee
% where $p\in[0,1]$ and a measure $\eta$ is such that $\E^\eta(1-e^{-\sigma})=1.$
% The corresponding  characteristic measure  is
%\bel{eq:characteristic-general_decomposition}
%\hat P= (1-p) \hat P^c+ p  \bar P^\eta.
%\ee

Here is another conclusion motivated by the aforementioned facts. Assume that there exists a unique continuous
entrance law  $\rho_t$ with the characteristic measure $\hat P^c$ that  satisfies $\hat P^c(1-e^{-\sigma})=1$. For instance, this is the case for the process $U_\alpha$, see Remark \ref{remk:360}. Then any entrance law $(\theta_t)$, which corresponds to an extension with a zero sojourn at $0$, can be uniquely represented by
\bel{eq:entrance-general_decomposition}
 \theta_t=(1-p)\rho_t+ p\,\theta P_t^0,
 \ee
where $p\in[0,1]$, and the measure $\theta$ satisfies $\E^\theta(1-e^{-\sigma})=1$. The corresponding characteristic measure  is
\bel{eq:characteristic-general_decomposition}
\hat P= (1-p) \hat P^c+ p  \bar P^\theta.
\ee

% a together with
% \cite[Theorem 2.5, Chapter 4]{Ethier+Kurtz:1986} and \cite[Theorem 4.2, Chapter 3]{Pazy}

% that for any entrance law $\{\eta_t\}$, there is a limit of  holding jumping processes
% with exiting from zero by a jump distributed as
% $\frac{\eta_\ve}{\|\eta_\ve\|},$ and ...........................

\section{SDE for the skew stable L\'{e}vy  process}\label{sec:SkewProofs}

In this section all semigroups, resolvents, etc.\ are related to extensions of the minimal process corresponding to the process $U_\alpha$ killed at 0.

\subsection{Existence of solutions to SDEs with local times}\label{sec:Existence}
 % Consider a Feller process $X$ with the resolvent $R_\lambda f(x)=
%V_\lambda f(x)+\E^x e^{-\lambda \sigma} \frac{\lg  \eta, V_\lambda f\rg}{\lg  \eta, V_\lambda 1\rg}$,
% where $\eta$ is an infinite measure such that $\E^\eta(1-e^{-\sigma})=1,$
%cf. \eqref{eq:resolvent_general}, \eqref{eq:Resolvent_at0}.
%% where  $\beta<\alpha-1$, see \eqref{eq:res_limit_proc}.
%% Let measure $\eta$ be from \eqref{eq:measure_eta}.
%\begin{thm}\label{thm:eq_Levy_1}
%The process $X$ is a (weak) solution to the SDE
%\bel{eq:SDE_Levy}
% X(t) = X(0)+  U_\alpha(t) + U_\eta(L^X_0(t)), t\geq 0,
%\ee
%where $U_{\alpha}$ is a symmetric $\alpha$-stable process,
% $U_\eta$ is a pure jump Levy process with
% the Levy measure $\eta,$
% $U_\alpha$ and $U_\eta$ are independent, the process $L^X_0$ is the local time of $X$ at 0 (see the definition in the previous subsection).
%\end{thm}
Let $X$ be a Feller process with the resolvent
\begin{equation}\label{eq:reso}
R_\lambda f(x)=V_\lambda f(x)+\E^x e^{-\lambda \sigma} \frac{\lg  \theta, V_\lambda f\rg}{\lg  \theta, V_\lambda 1\rg},\quad \lambda>0,~~x\in\mathbb{R},
\end{equation}
where $\theta$ is an infinite measure satisfying $\E^\theta(1-e^{-\sigma})=1$. The right-hand side in \eqref{eq:reso} is obtained by a combination of \eqref{eq:resolvent_general} and \eqref{eq:Resolvent_at0}.
\begin{thm}\label{thm:eq_Levy_1}
The process $X$ is a (weak) solution to the SDE
\bel{eq:SDE_Levy}
X(t) = X(0)+  U_\alpha(t) + S_\theta(L^X_0(t)),\quad t\geq 0,
\ee
where $U_\alpha$ is a symmetric $\alpha$-stable L\'{e}vy process, $S_\theta$ is a pure-jump L\'{e}vy process with the L\'{e}vy measure $\theta$, which is independent of $U_\alpha$, the process $L^X_0$ is the local time of $X$ at $0$ (see the definition in the previous subsection).
\end{thm}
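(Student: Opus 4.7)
The proof combines the excursion-theoretic identification of $X$ from Section \ref{sec2} with an explicit construction of the driving processes $U_\alpha$ and $S_\theta$ out of the excursions of $X$. Comparing the given resolvent \eqref{eq:reso} with the general decomposition \eqref{eq:resolvent_general} and the jump-entrance formula \eqref{eq:Resolvent_at0}, and using $\E^\theta(1-e^{-\sigma})=1$ (so that $m=0$ in the notation of Section \ref{sec2}), the process $X$ corresponds to the entrance law $\theta_t=\theta P_t^0$ and has excursion measure $\hat P=\bar P^\theta$ with zero sojourn at $0$. Thus $X$ admits an It\^{o} synthesis from a Poisson point measure $N=\sum_k\delta_{(s_k,u_k)}$ on $[0,\infty)\times D$ of intensity $\de s\times\bar P^\theta(\de u)$; the atoms $(s_k,u_k)$ are the excursions, and $L_0^X(t)=\vf(t)$ is the Blumenthal--Getoor local time defined in \eqref{eq:inverse}.

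Since under $\bar P^\theta=\int P^x\theta(\de x)$ the initial value $u(0)$ has distribution $\theta$, the projection $\sum_k\delta_{(s_k,u_k(0))}$ is a Poisson point process on $[0,\infty)\times(\mbR\setminus\{0\})$ of intensity $\de s\times\theta(\de x)$. The integrability $\int(1\wedge|x|)\theta(\de x)<\infty$, which follows from $\lg\theta,V_\lambda 1\rg<\infty$ together with the asymptotic $V_\lambda 1(x)=O(|x|^{\alpha-1})$ given in \eqref{eq:hit}, ensures that
\[
S_\theta(s):=\sum_{s_k\le s}u_k(0),\qquad s\ge 0,
\]
converges and defines a pure-jump L\'{e}vy process with L\'{e}vy measure $\theta$. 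Since $L_0^X(\tau(s_k-))=s_k$, the process $t\mapsto S_\theta(L_0^X(t))$ jumps precisely at the excursion-onset times $\tau(s_k-)$ with jump size $u_k(0)$, which matches exactly the jump of $X$ at that instant.

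Set $\tilde U(t):=X(t)-X(0)-S_\theta(L_0^X(t))$. By the jump-matching just observed, $\tilde U$ has no jumps at excursion-onset times. On each excursion interval $[\tau(s_k-),\tau(s_k))$, $L_0^X$ is constant and $\tilde U$ inherits the increments of $u_k-u_k(0)$, which, conditionally on $u_k(0)$, are the increments of a symmetric $\alpha$-stable L\'{e}vy path started at $0$ and killed upon hitting $-u_k(0)$. To upgrade this local statement to the global claim that $\tilde U$ is a symmetric $\alpha$-stable L\'{e}vy process independent of $S_\theta$, I would invoke the martingale problem: for $f\in C_0^\infty(\mbR)$ and $\mathcal A$ the fractional-Laplacian generator of $U_\alpha$, an application of the compensation formula for $N$, combined with the strong Markov property of $U_\alpha$ killed at $0$, shows that $f(\tilde U(t))-\int_0^t\mathcal Af(\tilde U(s))\,\de s$ is a martingale; L\'{e}vy's martingale characterization then yields $\tilde U\peq U_\alpha$, while the product structure $\de s\times\theta(\de x)$ of the starting-point intensity delivers the required independence of $\tilde U$ and $S_\theta$.

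The main obstacle lies in this last step. The random time change $t=\tau(s)$ entangles $\tilde U$ and $S_\theta$ at the level of the common Poisson point measure $N$, since the lengths $\sigma(u_k)$ that determine $\tau$ depend on the starting marks $u_k(0)$ through the law of $U_\alpha$ killed at $0$; disentangling the two, verifying the martingale property across the countable family of excursion intervals, and producing the independence of $\tilde U$ from $S_\theta$ require a careful invocation of the compensation formula on the marked space $[0,\infty)\times D$ together with the product structure of $\de s\times\bar P^\theta(\de u)$.
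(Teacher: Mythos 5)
Your setup coincides with the paper's: you identify $X$ with the It\^{o} synthesis over the Poisson point measure of intensity ${\rm d}s\times \bar P^\theta({\rm d}u)$, take $S_\theta(s)=\sum_{s_k\le s}u_k(0)$ (absolutely convergent because $\lg\theta,V_\lambda 1\rg<\infty$ and $V_\lambda 1(x)\asymp|x|^{\alpha-1}$ near $0$), and observe that $\tilde U:=X-X(0)-S_\theta(L_0^X(\cdot))$ absorbs the excursion-onset jumps and follows stable increments inside each excursion interval. Up to this point you agree with the paper.

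The genuine gap is the step you yourself flag as ``the main obstacle'': showing that $\tilde U$ is a symmetric $\alpha$-stable L\'{e}vy process \emph{independent of} $S_\theta$. You propose a compensation-formula/martingale-problem argument but do not carry it out, and this is precisely where the difficulty of the theorem is concentrated. The local statement (``stable increments on each excursion interval'') does not globalize for free: the zero set $\{t:X(t)=0\}$ is an uncountable closed set, $\tilde U$ is pieced together from countably many fragments accumulating at its points, and the compensation formula for $N$ naturally produces martingales in the local-time scale $s$, not in real time $t$; transferring the martingale property through the time change $\tau$ and across the accumulation points of the zero set is exactly what needs proof. Moreover, even granting the martingale problem for the fractional Laplacian, it only identifies the law of $\tilde U$; the joint independence of $\tilde U$ and $S_\theta$ would require a separate (joint) martingale or characteristic-functional argument that you do not supply. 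The paper avoids all of this by a truncation: for each $\ve>0$ it builds $X^{(\ve)}$ from the excursions with $|u_k(0)|>\ve$, for which the identity $X^{(\ve)}=U^{(\ve)}_\alpha+S^{(\ve)}_\theta(\vf^{(\ve)}(\cdot))$ holds \emph{exactly} with $U^{(\ve)}_\alpha$ a genuine symmetric $\alpha$-stable process (an elementary strong-Markov concatenation of finitely many fragments per compact time interval) independent of $N^{(\ve)}_\theta$, and then passes to the limit using the a.s.\ uniform convergence of $\tau^{(\ve)},\vf^{(\ve)},S^{(\ve)}_\theta$ and the fact that for a.e.\ fixed $t$ one has $\vf^{(\ve)}(t)=\vf(t)$ for all small $\ve$. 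If you want to complete your proof along your own lines, you must either execute the compensation-formula computation in full (including the time change and the joint independence), or adopt an approximation scheme of the paper's type.
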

% The following example of an entrance law
% and the measure $\hat P$  is important for  the description of a skew Levy process.
%

A specialization of Theorem \ref{thm:eq_Levy_1}, with the L\'{e}vy measure $\theta=\eta$ given by \eqref{eq:Levy measure} and $S_\theta=S_\beta$, immediately proves Theorem B, except the explicit formula for the constant $C$. Here is the remaining piece of the proof. Write
\begin{multline*}
\frac{1}{C}=\int_{\mbR}\E^x( 1-e^{-\sigma(U_\alpha)})\eta({\rm d}x)= \frac{c_-+c_+}{\pi u_1(0)}\int_0^\infty
\int_0^\infty\frac{1-\cos(x\theta)}{1+\theta^\alpha} {\rm d}\theta \frac{{\rm d}x}{{x^{1+\beta}}}\\=\frac{c_-+c_+}{\pi u_1(0)}\frac{\Gamma(1-\beta)\sin\frac{\pi(1+\beta)}{2}}{\beta}\int_0^\infty\frac{\theta^\beta}{1+\theta^\alpha}{\rm d}\theta=
\frac{(c_-+c_+)\Gamma(1-\beta) \cos\frac{\pi\beta}{2}\sin\frac{\pi}{\alpha}}{\beta\sin\frac{\pi(\beta+1)}{\alpha}}
\end{multline*}
having utilized \eqref{eq:Levy measure} and \eqref{eq:asymptot_hitting} for the second equality, Fubini's theorem and Lemma \ref{lem:calculating_integrals}(b) for the third equality and Lemma \ref{lem:calculating_integrals}(a) for the last equality.
%\begin{corl}\label{corl:SDE_skew_Levy}
%Let  $\beta<\alpha-1$ and $X$ be the limit  process from Corollary \ref{corl:conv_proc}. Then $X$ is a solution to the SDE \eqref{eq:SDE_Levy},
%where $U_\eta$ is a pure jump $\beta$-stable process with Levy measure
%\[
%\eta(dx)=C\frac{(c_-\1_{x<0}+c_+\1_{x>0})}{|x|^{1+\beta}} dx,
%\]
%where $c_\pm$ are from \eqref{eq:measure_eta}, and
%\[
%C^{-1}=  \int_{\mbR}\E^x(1-e^{-\sigma})\eta(dx)
%=
%\frac{c_-+c_+}{\pi u_1(0)}\int_0^\infty
%     \int_0^\infty\frac{1-\cos(x\theta)}{1+\theta^\alpha} d\theta \frac{dx}{{x^{1+\beta}}}=
%\]
%\[
%=\frac{\beta\sin\frac{\pi(\beta+1)}{\alpha}}{(c_-+c_+)\Gamma(1-\beta) \cos\frac{\pi\beta}{2}\sin\frac{\pi}{\alpha}}.
%\]
%\end{corl}

\begin{proof}[Proof of Theorem \ref{thm:eq_Levy_1}]
Since $\E^\theta (1-e^{-\sigma})=1$ by assumption, we conclude that the `holding' parameter $m=1-\E^\theta (1-e^{-\sigma})$ is equal to $0$, and the process $X$ has a zero sojourn at $0$ a.s.
%Notice that  $\E^{\eta}(1-e^{-\sigma})=1$, so the  ``holding'' parameter $m=1-\E^{\eta}(1-e^{-\sigma})$ equals 0 and the process $X$ has zero sojourn at the point 0 a.s.

%Below we construct a Poisson point measure $N(ds, du)$ with intensity $\hat P$ in a special way. This construction  will immediately imply many
%statements like  independence of processes, measurability, etc.

Let $N_\theta({\rm d}s, {\rm d}x)$ be a Poisson point measure with intensity ${\rm d}s \times  \theta({\rm d}x)$.
Let $((s_k, x_k))_{k\geq 1}$ be a (measurable) enumeration of the atoms of $N_\theta$. Let $U_{\alpha,1}$, $U_{\alpha,2} \ldots $ denote independent copies of $U_\alpha$, which are independent of $N_\theta$. Put $N:= \sum_{k\geq 1}\delta_{(s_k, x_k+ U_{\alpha, k}(\cdot \wedge \sigma_k))}$, where
\[
\sigma_k:=\sigma(x_k+U_{\alpha, k}(\cdot)):=\inf\{t\geq 0\ :\ x_k+U_{\alpha, k}(t)=0\}.
\]
Then $N$ is a Poisson random measure on $[0,\infty)\times D$ with intensity ${\rm d}s\times \hat P= {\rm d}s\times  \bar P^\theta$.

%Let $N_\eta( ds, dx)$ be a Poisson point measure with intensity $ds \times  \eta(dx).$
%Enumerate points of atoms of $N_\eta$ in any
% measurable way $\{(s_k, x_k)\}_{k\geq 1}$. Let $\{U_k\}_{k\geq 1}$ be a sequence of independent symmetric $\alpha$-stable processes, which is independent of $N_\eta.$
%Set $N:= \sum_{k\geq 1}\delta_{(s_k, x_k+ U_k(\cdot \wedge \sigma_k)},$
%where
%\[
%\sigma_k:=\sigma(x_k+U_k(\cdot)):=\inf\{t\geq 0\ :\ x_k+U_k(t)=0\}.
%\]
%Then
%$N$ is a Poisson point measure on $[0,\infty)\times U$ with intensity $ds\times \hat P= ds\times  \bar P^\eta.$

Without loss of generality we can and do assume that $X(0)=0$ and that the process $X$ is built upon the Poisson point measure $N$ with the help of It\^{o}'s procedure. Here are details of the construction. Put
\[
\tau(s):=\sum_{s_k\leq s} \sigma(x_k+U_{\alpha, k}(\cdot)),\quad s\geq 0,
\]
and
\[
\vf(t):= \inf\{s\geq 0\  : \ \tau(s)> t\},\quad t\geq 0.
\]
For $t\in [\tau(s_k-), \tau(s_k))$, put $X(t):= x_k+ U_{\alpha, k}(t- \tau(s_k-))$, and, for $t\notin \bigcup_k[\tau(s_k-), \tau(s_k))$, put $X(t):=0$.
\begin{lem}\label{lem: 633}
For almost all $t>0$ with respect to the Lebesgue measure,
\[
\Pb\Big(\exists k \ :\ \ t\in (\tau(s_k-), \tau(s_k)) \ \ \mbox{and}\ \
\tau(s_k-) =\vf(t)\Big)=1.
\]
\end{lem}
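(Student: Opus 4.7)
The plan is to reduce Lemma \ref{lem: 633} to the assertion that the range $R:=\tau([0,\infty))$ of the subordinator $\tau$ has Lebesgue measure zero almost surely, and then to exchange ``almost surely'' and ``for a.e.\ $t$'' via Fubini. Once $\ell(R)=0$ is available, the complement $[0,\infty)\setminus R$ is exactly the union of the pairwise disjoint open excursion intervals $I_k:=(\tau(s_k-),\tau(s_k))$, so for each $t$ in this full-measure set there is a unique $k$ with $t\in I_k$; a one-line monotonicity argument then identifies $s_k$ with $\varphi(t)$ (equivalently, $\tau(\varphi(t)-)=\tau(s_k-)$), which is the relation the lemma asserts.

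The null-range claim rests on the hypothesis $\E^\theta(1-e^{-\sigma})=1$, which forces the holding parameter $m=1-\hat P(1-e^{-\sigma})$ to vanish, so that $\tau(s)=\sum_{s_k\le s}\sigma_k$ is a pure-jump subordinator without drift. By strict monotonicity of $\tau$ the intervals $I_k$ are pairwise disjoint, all sit inside $[0,\tau(s)]$ whenever $s_k\le s$, and their total length equals
\[
\ell\Big(\bigcup_{k:s_k\le s}I_k\Big)=\sum_{s_k\le s}\sigma_k=\tau(s).
\]
Hence $\ell\big([0,\tau(s)]\setminus\bigcup_{k:s_k\le s}I_k\big)=0$, and since $\tau(s)\to\infty$ a.s.\ (as $\tau$ is a non-degenerate subordinator), letting $s\to\infty$ yields $\ell(R)=0$ almost surely.

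For the pointwise identification, fix a realisation on the event $\{\ell(R)=0\}$ and any $t\in[0,\infty)\setminus R$. The point $t$ lies in a unique $I_k$; for that $k$ one has $\tau(s)\le\tau(s_k-)<t$ whenever $s<s_k$ and $\tau(s_k)>t$, so $\varphi(t)=\inf\{s\ge 0:\tau(s)>t\}=s_k$, which is the required relation. An appeal to Fubini's theorem then converts the a.s.\ statement ``for a.e.\ $t$, such a $k$ exists'' into the formulation of the lemma (``for a.e.\ $t$, with probability one'').

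The only computational ingredient is the length identity in the second paragraph, and it is nothing more than the observation that, with $m=0$, the cumulative jumps exhaust $\tau$; no further information on $\theta$ or $U_\alpha$ is needed beyond the compatibility identity $\E^\theta(1-e^{-\sigma})=1$.
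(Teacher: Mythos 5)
Your proposal is correct and takes essentially the same route as the paper: the paper's one-line proof invokes the zero sojourn at $0$ (which, as you show, comes from $m=1-\E^\theta(1-e^{-\sigma})=0$, so the jump lengths $\sum_{s_k\le s}\sigma_k$ exhaust $\tau(s)$ and the complement of the excursion intervals is Lebesgue-null a.s.) together with Fubini's theorem. You have simply written out those details, including the elementary identification $\varphi(t)=s_k$ for $t\in(\tau(s_k-),\tau(s_k))$, which is the relation the lemma (up to an evident typo) asserts.
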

The proof follows from the fact that $X$ has a zero sojourn at $0$ and Fubini's theorem.

Put
\bel{eq: trunc_measure_Poisson}
N_\theta^{(\ve)}:= \sum_k \1_{\{|x_k|>\ve\}} \delta_{(s_k, x_k)},\ \ \
N^{(\ve)}:= \sum_{k\geq 1}\1_{\{|x_k|>\ve\}} \delta_{(s_k, x_k+ U_{\alpha, k}(\cdot \wedge \sigma_k))}.
\ee
Then $N_\theta^{(\ve)}$ is a Poisson point measure on $[0,\infty)\times (\mbR\setminus\{0\})$ with intensity ${\rm d}s \times (\1_{\{|x|>\ve\}}\theta({\rm d}x))$, and $N^{(\ve)}$ is a Poisson point measure on $[0,\infty)\times D$ with intensity ${\rm d}s \times \bar P^{\1_{\{|x|>\ve\}}\theta({\rm d}x)}$.

Assuming that $m=0$ we now construct a process $X^{(\ve)}$ with the help of the Poisson random measure $N^{(\ve)}$, along the lines of construction of $X$ with the help of $N$. Put
\[
\tau^{(\ve)}(s):=\sum_{s_k\leq s,\ |x_k|>\ve} \sigma(x_k+U_{\alpha, k}(\cdot)),\quad s\geq 0
\]
and 
\[
\vf^{(\ve)}(t):= \inf\{s\geq 0\  : \ \tau^{(\ve)}(s)\geq t\},\quad t\geq 0.
\]
For $t\in [\tau^{(\ve)}(s_k-), \tau^{(\ve)}(s_k))$, put
$X^{(\ve)}(t):= x_k+ U_{\alpha, k}(t- \tau^{(\ve)}(s_k-))$ provided that $|x_k|>\ve$. Otherwise, we put $X^{(\ve)}(t):=0$.
\begin{remk}
In contrast to the process $X$, a.s.\ there does not exist a $t$ such that $X^{(\ve)}(t)=0$. The process $X^{(\ve)}$ jumps upon `touching' $0$.
\end{remk}

In view of $\int_\mbR \int_0^T   \1_{\{|x|>\ve\}} {\rm d}s  \eta({\rm d}x)<\infty$ for any $T>0$, the Poisson point process  $N_\eta^{(\ve)}$ can be represented by $\sum_{k\geq 1}\delta_{(s^{(\ve)}_k, x^{(\ve)}_k)}$, where  $0<s^{(\ve)}_1<s^{(\ve)}_2<s^{(\ve)}_3<\ldots$, and each interval $[0,T]$ contains finitely many $s_k^{(\ve)}$ a.s. Put  $\sigma_k^{(\ve)}:=\sigma(x^{(\ve)}_k+U^{(\ve)}_{\alpha, k}(\cdot))$, where $U^{(\ve)}_{\alpha, k}$ is a process from the collection $(U_{\alpha, j})$, which corresponds to $s_k^{(\ve)}$ in the representation of $N$. According to the construction of $X^{(\ve)}$,
\[
X^{(\ve)}(0)=x_1^{(\ve)},
\]
\[
 X^{(\ve)}(\sigma_{1}^{(\ve)}+\ldots+\sigma_{k}^{(\ve)})=x^{(\ve)}_{k+1},
\]
\[
 X^{(\ve)}(t)=x^{(\ve)}_{k}+ U^{(\ve)}_{\alpha,k}(t),
 t\in[\sigma_{1}^{(\ve)}+\ldots+\sigma_{k}^{(\ve)}, \sigma_{1}^{(\ve)}+\ldots+\sigma_{k}^{(\ve)}+\sigma_{k+1}^{(\ve)}),
\]
and
\[
\sigma_{k+1}^{(\ve)}=\inf\{ t\geq \sigma_{1}^{(\ve)}+\ldots+\sigma_{k}^{(\ve)}\ \ : \ \ x_k^{(\ve)} +
U_{\alpha, k}^{(\ve)}(t- (\sigma_{1}^{(\ve)}+\ldots+\sigma_{k}^{(\ve)})) =0\}.
\]
Plainly, the process $U^{(\ve)}_\alpha$ defined by
$
U^{(\ve)}_\alpha(0):=0,
$ and
\[
 U^{(\ve)}_\alpha(t):=
U^{(\ve)}_\alpha(\sigma_{1}^{(\ve)}+\ldots+\sigma_{k}^{(\ve)})+ U_{\alpha, k}^{(\ve)}(t- (\sigma_{1}^{(\ve)}+\ldots+\sigma_{k}^{(\ve)})),
\]
for $t\in[\sigma_{1}^{(\ve)}+\ldots+\sigma_{k}^{(\ve)}, \sigma_{1}^{(\ve)}+\ldots+\sigma_{k}^{(\ve)}+\sigma_{k+1}^{(\ve)})
$, is a symmetric $\alpha$-stable L\'{e}vy process, which is independent of $N_\theta^{(\ve)}$. In particular, $U^{(\ve)}_\alpha$ is independent of
\bel{eq:beta_trunc_stb}
S_\theta^{(\ve)}(s):=\int_{[0,\,s]} \int_{|x|>\ve} x N_\theta^{(\ve)} ({\rm d}z, {\rm d}x) = \int_{[0,\,s]} \int_{\mbR\setminus\{0\}} x
\1_{\{|x|>\ve\}} N_\theta({\rm d}z, {\rm d}x).
\ee
Note also that
\[
 X^{(\ve)}(t) = U^{(\ve)}_\alpha(t)+ S_\theta^{(\ve)}(\vf^{(\ve)}(t)),\quad t\geq 0.
\]

%It is clear that the process $U^{(\ve)}_\alpha$ defined as
%\[
%U^{(\ve)}_\alpha(0):=0,
%\]
%\[
% U^{(\ve)}_\alpha(t)=
%U^{(\ve)}_\alpha(\sigma_{1}^{(\ve)}+...+\sigma_{k}^{(\ve)})+ U_k^{(\ve)}(t- (\sigma_{1}^{(\ve)}+...+\sigma_{k}^{(\ve)})), \ t\in[\sigma_{1}^{(\ve)}+...+\sigma_{k}^{(\ve)}, \sigma_{1}^{(\ve)}+...+\sigma_{k}^{(\ve)}+\sigma_{k+1}^{(\ve)})
%\]
%
%is a symmetric  $\alpha$-stable process independent of $N_\eta^{(\ve)},$ and in particular $U^{(\ve)}_\alpha$ is independent of
% the process
%\bel{eq:beta_trunc_stb}
%U_\eta^{(\ve)}(s):=\int_0^s\int_{|x|>\ve} x N_\eta^{(\ve)} (dz, dx) = \int_0^s\int_{\mbR\setminus\{0\}} x
%\1_{|x|>\ve} N_\eta(dz, dx).
%\ee
%Notice also that
%\[
% X^{(\ve)}(t) = U^{(\ve)}_\alpha(t)+ U_\eta^{(\ve)}(\vf^{(\ve)}(t)).
%\]
\begin{remk}
Let $\zeta_\ve$ be a random variable with distribution $\Pb(\zeta_\ve\in A)=\frac{\int_{|x|>\ve}\1_A(x) \theta({\rm d}x)}{\theta({|x|>\ve})}$. Then the distribution of $X_\ve$ coincides with that of $X_{\zeta_\ve}$, see \eqref{eq:def_of_X} for the definition.
\end{remk}
%\begin{remk}
%Let $\zeta_\ve$ be a random variable with the distribution $\Pb(\zeta_\ve\in A)=\frac{\int_{|x|>\ve}\1_A(x) \eta(dx)}{\eta({|x|>\ve})}$. It is easy to see that the distribution of $\{X_\ve(t)\}$ coincides with the distribution of $X_{\zeta_\ve},$ see the notation in the Introduction.
%%We also note that $X^{(\ve)}(t) =0$ only for $t<\inf\{ \tau^{(\ve)}(s_k) \ : \ |x_k|>\ve\}.$
%\end{remk}
\begin{lem} \label{lem: 634}
%For any $T>0$ we have the uniform convergence with probability 1:
%\[
%\lim_{\ve\to0}\sup_{s\in [0,T]} |\tau^{(\ve)}(s)-\tau(s)| =0,
%\]
%\[
%\lim_{\ve\to0}\sup_{t\in [0,T]} |\vf^{(\ve)}(t)-\vf(t)| =0,
%\]
%\[
%\lim_{\ve\to0}\sup_{s\in [0,T]} |U^{(\ve)}_\eta(s)-U_\eta(s)| =0,
%\]
%where $U_\eta(s)= \int_0^s\int_{\mbR\setminus\{0\}} x
%  N_\eta(dz, dx).$
For each $T>0$, almost surely
\[
\lim_{\ve\to 0}\sup_{s\in [0,\,T]} |\tau^{(\ve)}(s)-\tau(s)| =0,
\]
\[
\lim_{\ve\to0}\sup_{t\in [0,\,T]} |\vf^{(\ve)}(t)-\vf(t)| =0
\]
and
\[
\lim_{\ve\to0}\sup_{s\in [0,\,T]} |S^{(\ve)}_\theta(s)-S_\theta(s)| =0,
\]
where
%де
 $S_\theta(s)= \int_0^s \int_{\mbR\setminus\{0\}}xN_\theta({\rm d}z, {\rm d}x)$ for $s\geq 0$.
\end{lem}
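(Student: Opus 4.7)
The plan is to handle the three uniform limits separately. For the first, write $\tau(s) - \tau^{(\ve)}(s) = \sum_{s_k \le s,\, |x_k|\le \ve} \sigma_k$, which is nondecreasing in $s$ (each summand is nonnegative), so
\[
\sup_{s\in[0,T]}|\tau(s) - \tau^{(\ve)}(s)| = \tau(T) - \tau^{(\ve)}(T).
\]
The subordinator $(\tau(s))_{s\ge 0}$ has Laplace exponent $\hat P(1 - e^{-\sigma}) = \E^\theta(1-e^{-\sigma}) = 1$, so $\tau(T) < \infty$ almost surely. Monotone convergence on the countable sum over atoms of $N_\theta$ in $[0,T]$ then yields $\tau^{(\ve)}(T) \uparrow \tau(T)$ as $\ve\downarrow 0$, proving the first claim.

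For the third claim, the standing assumption $\beta \in (0,\alpha-1) \subset (0,1)$ gives $\int_{|x|\le 1} |x|\,\theta({\rm d}x) < \infty$, while $\int_{|x|>1}\theta({\rm d}x) < \infty$ ensures only finitely many large-jump atoms on $[0,T]$, whence $\sum_{s_k\le T}|x_k|<\infty$ a.s. The bound
\[
|S_\theta(s) - S^{(\ve)}_\theta(s)| \;\le\; \sum_{s_k\le s,\, |x_k|\le \ve}|x_k|
\]
is again nondecreasing in $s$, so the supremum over $[0,T]$ equals its value at $s=T$, which vanishes as $\ve\to 0$ by monotone convergence.

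The second claim is the most delicate and is where I expect the main effort to lie. Since $\theta$ is infinite, the Lévy measure of the subordinator $\tau$ is infinite, so $\tau$ is strictly increasing; its right-continuous inverse $\vf$ is therefore continuous and nondecreasing. I would choose $S$ with $\tau(S) > T$ a.s., so that $\vf([0,T]) \subset [0,S]$. The uniform monotone convergence $\tau^{(\ve)}\uparrow \tau$ on $[0,S]$ from the first claim, combined with strict monotonicity of $\tau$, yields pointwise convergence $\vf^{(\ve)}(t) \to \vf(t)$ on $[0,T]$: for any $\delta>0$ one has $\tau(\vf(t)+\delta) - t = \eta > 0$, and once $\sup_{[0,S]}|\tau - \tau^{(\ve)}| < \eta$ this forces $\vf^{(\ve)}(t) \le \vf(t)+\delta$, while $\vf^{(\ve)}(t)\ge \vf(t)$ follows from $\tau^{(\ve)}\le\tau$. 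Since $\vf$ and each $\vf^{(\ve)}$ are nondecreasing and the limit $\vf$ is continuous, a Pólya-type theorem upgrades the pointwise convergence to uniform convergence on $[0,T]$. The main obstacle is precisely here: the asymmetric conventions defining $\vf$ (strict inequality) and $\vf^{(\ve)}$ (non-strict) together with the presence of jumps of $\tau$ require care, and are resolved by invoking the strict monotonicity of $\tau$ inherited from infiniteness of $\theta$.
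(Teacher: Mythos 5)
Your proposal is correct and follows exactly the route the paper intends: the paper's own proof is the single remark that everything ``follows from the construction and the fact that $(\tau(t))$ and $S_\theta$ are pure-jump L\'{e}vy processes of (locally) finite variation,'' and your argument (monotone convergence at $s=T$ for $\tau$ and $S_\theta$, then strict monotonicity of $\tau$ plus a P\'{o}lya-type argument for the inverses) is precisely the fleshed-out version of that remark. One small note: since Lemma 5.3 is used inside Theorem 5.1 for a general infinite $\theta$ with $\E^\theta(1-e^{-\sigma})=1$, the finite variation of $S_\theta$ should be drawn from $\int_{|x|\le 1}|x|\,\theta({\rm d}x)\le\int_{|x|\le 1}|x|^{\alpha-1}\theta({\rm d}x)\lesssim\E^\theta(1-e^{-\sigma})<\infty$ (using $\alpha-1<1$ and \eqref{eq:hit}) rather than from the $\beta$-stable special case you cite, but this changes nothing in the argument.
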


The proof follows from the construction and the fact that the processes $(\tau(t))$ and $S_\theta$ are pure-jump L\'{e}vy processes of (locally) finite variation.
%The proof follows from the construction and the fact that $\{\tau(t)\}, \{U_\eta(t)\}$
%are pure jump Levy processes of finite variation.
%\begin{lem}\label{lem: 636}
%For a.e. $t>0$ w.r.t. the Lebesgue measure
%%any $t>0$  (is it clear or is it better to write for a.e. $t>0$ w.r.t. the Lebesgue measure???????????????)
%\bel{eq:1207}
%\Pb(\exists \ve_0>0\ \forall \ve\in (0,\ve_0):\ \ \vf^{(\ve)}(t)=\vf(t))=1.
%\ee
%\end{lem}
\begin{lem}\label{lem: 636}
For almost all $t>0$ with respect to the Lebesgue measure,
\bel{eq:1207}
\Pb(\exists \ve_0>0\ \forall \ve\in (0,\ve_0):\ \ \vf^{(\ve)}(t)=\vf(t))=1.
\ee
\end{lem}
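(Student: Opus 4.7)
The plan is to use Lemma~\ref{lem: 633} to localize $t$ strictly inside a single excursion interval of $\tau$, and then to transfer this localization to the truncated clock $\tau^{(\ve)}$ for all sufficiently small $\ve$ by invoking the uniform convergence of Lemma~\ref{lem: 634}.

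Fix $t>0$ in the full-Lebesgue-measure set where Lemma~\ref{lem: 633} applies, and work on the a.s.\ event on which there exists a random index $k=k(\omega)$ with $\tau(s_k-)<t<\tau(s_k)$. Since the characteristic measure $\theta$ puts no mass at $0$, we have $|x_k|>0$ a.s., so for every $\ve\in(0,|x_k|)$ the atom $(s_k,x_k)$ is retained in $N_\theta^{(\ve)}$ by \eqref{eq: trunc_measure_Poisson}, and $\tau^{(\ve)}$ inherits exactly the same jump $\sigma_k$ at~$s_k$.

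Applying Lemma~\ref{lem: 634} with any deterministic $T\ge s_k$, the a.s.\ uniform convergence $\tau^{(\ve)}\to\tau$ on $[0,T]$ preserves the one-sided limits at $s_k$, giving
\[
\tau^{(\ve)}(s_k-)\ \longrightarrow\ \tau(s_k-),\qquad \tau^{(\ve)}(s_k)\ \longrightarrow\ \tau(s_k)\quad\text{a.s. as }\ve\to 0+;
\]
consistently with $\tau^{(\ve)}(s_k)-\tau^{(\ve)}(s_k-)=\sigma_k=\tau(s_k)-\tau(s_k-)$, only the starting position of the $k$-th excursion is shifted by the truncation. Hence there is a (random) $\ve_0\in(0,|x_k|)$ such that for every $\ve\in(0,\ve_0)$ both $\tau^{(\ve)}(s_k-)<t<\tau^{(\ve)}(s_k)$ and $\tau(s_k-)<t<\tau(s_k)$ hold. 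Unpacking the definitions of $\vf$ and $\vf^{(\ve)}$, we then obtain $\vf^{(\ve)}(t)=s_k=\vf(t)$ for all $\ve\in(0,\ve_0)$, which is exactly \eqref{eq:1207}.

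I do not anticipate a substantive obstacle: the whole argument is essentially a deterministic consequence of Lemmas~\ref{lem: 633} and~\ref{lem: 634} once it is observed that the distinguished excursion indexed by $k$ survives every sufficiently small truncation because $|x_k|>0$ a.s.
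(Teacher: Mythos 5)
Your proposal is correct and follows essentially the same route as the paper: localize $t$ strictly inside a single excursion interval via Lemma~\ref{lem: 633}, use Lemma~\ref{lem: 634} to get $\tau^{(\ve)}(s_k-)\to\tau(s_k-)$ and $\tau^{(\ve)}(s_k)\to\tau(s_k)$, and conclude $t\in(\tau^{(\ve)}(s_k-),\tau^{(\ve)}(s_k))$ for small $\ve$, hence $\vf^{(\ve)}(t)=s_k=\vf(t)$. Your added observation that the $k$-th atom survives every truncation with $\ve<|x_k|$ because $|x_k|>0$ a.s.\ is a correct detail left implicit in the paper.
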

\begin{proof}
It follows from Lemmas \ref{lem: 633} and \ref{lem: 634} that
for a.e.\ $t>0$ with probability $1$ there exists $k$ such that
\[
\ t\in (\tau(s_k-), \tau(s_k)) \ \ \mbox{and}\ \
\tau(s_k-) =\vf(t)
\]
and
\[
\lim_{\ve\to0}\tau^{(\ve)}(s_k-)= \tau(s_k-),\ \ \lim_{\ve\to0}\tau^{(\ve)}(s_k)= \tau(s_k).
\]
These entail $t\in (\tau^{(\ve)}(s_k-), \tau^{(\ve)}(s_k))$ for small $\ve>0$ and thereupon \eqref{eq:1207}.
\end{proof}
\begin{corl}\label{cor:722}
For each $t>0$,
\[
\lim_{\ve\to0}S_\theta^{(\ve)}(\vf^{(\ve)}(t)) = S_\theta(\vf(t))\quad \text{a.s.}
\]
\end{corl}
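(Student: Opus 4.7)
My plan is to split the difference via the telescoping identity
\[
S_\theta^{(\ve)}(\vf^{(\ve)}(t)) - S_\theta(\vf(t))
= \bigl[S_\theta^{(\ve)}(\vf^{(\ve)}(t)) - S_\theta(\vf^{(\ve)}(t))\bigr]
+ \bigl[S_\theta(\vf^{(\ve)}(t)) - S_\theta(\vf(t))\bigr]
\]
and to control each bracket separately using the two preceding lemmas. No finer analysis of the jump structure of $S_\theta$ should be required beyond what those lemmas already provide.

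For the first bracket, I would appeal to the third conclusion of Lemma \ref{lem: 634}, namely $\sup_{s\in[0,T]}|S_\theta^{(\ve)}(s)-S_\theta(s)|\to 0$ a.s.\ for every $T>0$. The second conclusion of the same lemma gives $\vf^{(\ve)}(t)\to\vf(t)$, so in particular $\vf^{(\ve)}(t)$ lies in a bounded interval $[0,T]$ for all sufficiently small $\ve$; substituting this bounded value into the uniform bound forces the first bracket to tend to $0$ a.s. For the second bracket, Lemma \ref{lem: 636} gives, for a.e.\ $t>0$, that almost surely there exists $\ve_0>0$ with $\vf^{(\ve)}(t)=\vf(t)$ for every $\ve\in(0,\ve_0)$, in which case the bracket is identically zero. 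Adding the two contributions yields the claim.

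To upgrade the conclusion from ``a.e.\ $t$'' (which is what Lemma \ref{lem: 636} provides) to ``each $t$'' as stated, I would observe that for any fixed $t>0$, the event $\{t=\tau(s_k)\text{ for some }k\}$ is a countable union of probability-zero events (the distributions of $\tau(s_k)$ being diffuse), so almost surely $t$ belongs to the open interval $(\tau(s_{k(t)}-),\tau(s_{k(t)}))$ for a unique index $k(t)$, giving $\vf(t)=s_{k(t)}$. By Lemma \ref{lem: 634} one has $\tau^{(\ve)}(s_{k(t)}\pm)\to\tau(s_{k(t)}\pm)$, and for every $\ve<|x_{k(t)}|$ the index $s_{k(t)}$ remains a jump of $\tau^{(\ve)}$; hence $t\in(\tau^{(\ve)}(s_{k(t)}-),\tau^{(\ve)}(s_{k(t)}))$ for all small $\ve$, which yields $\vf^{(\ve)}(t)=s_{k(t)}=\vf(t)$.

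The main obstacle I expect is this ``each $t$ versus a.e.\ $t$'' issue: Lemma \ref{lem: 636} is stated only in the a.e.\ form (because its proof invokes Lemma \ref{lem: 633}, itself an a.e.\ statement), and a careless reading might leave the fixed-$t$ version in doubt. The fix is the Fubini-type observation above, combined with the fact that $\tau(\infty)=\infty$ (which rules out $t$ lying beyond the range of $\tau$). Everything else is routine given the uniform convergence provided by Lemma \ref{lem: 634}.
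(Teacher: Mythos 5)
Your main line of argument is exactly the paper's (implicit) proof: the corollary is stated there without proof precisely because it follows by writing $S_\theta^{(\ve)}(\vf^{(\ve)}(t))=S_\theta^{(\ve)}(\vf(t))$ for small $\ve$ via Lemma \ref{lem: 636} and then invoking the uniform convergence $S_\theta^{(\ve)}\to S_\theta$ on compacts from Lemma \ref{lem: 634}; your telescoping decomposition is the same computation. Note also that the paper only ever uses the corollary for almost all $t_0$ (in the passage leading to \eqref{eq:741}), so the a.e.-$t$ version is all that is actually needed downstream.

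Your attempted upgrade from ``a.e.\ $t$'' to ``each $t$'', however, contains a genuine gap. You argue that since each $\tau(s_k)$ has a diffuse law, almost surely $t\neq\tau(s_k)$ for all $k$, and you then conclude that $t$ lies in some open excursion interval $(\tau(s_{k(t)}-),\tau(s_{k(t)}))$. This is a non sequitur: the complement in $[0,\infty)$ of the union of the open excursion intervals is the closed range of the driftless subordinator $\tau$, which is an uncountable perfect set a.s.\ (the paper explicitly remarks that $\{t:X(t)=0\}$ does not reduce to the countable set of excursion endpoints). Excluding the countable family of right endpoints therefore does not place $t$ inside an open interval. The conclusion you want is nevertheless true, but it requires the stronger fact that a subordinator with zero drift a.s.\ does not hit a fixed point (see, e.g., Bertoin, \emph{L\'evy processes}, Ch.~III), together with the diffuseness of the laws of the left endpoints $\tau(s_k-)$. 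Alternatively, and more simply, since $\tau^{(\ve)}\leq\tau$ one has $\vf^{(\ve)}(t)\downarrow\vf(t)$ for every $t$ as $\ve\downarrow0$ (by Lemma \ref{lem: 634}), so the second bracket tends to $0$ for every $t$ by right-continuity of $S_\theta$ alone, with no excursion-interval argument and no appeal to Lemma \ref{lem: 636}. Either repair, or simply settling for the a.e.-$t$ statement that the paper actually uses, closes the argument.
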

%For any $t>0$ we have convergence with probability 1:
%\[
%\lim_{\ve\to0}U_\eta^{(\ve)}(\vf^{(\ve)}(t)) = U_\eta(\vf(t)).
%\]
%\end{corl}
A fixed time $t_0$ is not a jump-time of a L\'{e}vy process a.s. Hence, by Fubini's theorem, for each $k\geq 1$ and almost all $t_0>0$ with respect to the Lebesgue measure,
\[
\Pb\left(\lim_{t\to t_0} U_{\alpha, k}(t)=U_{\alpha, k}(t_0)\right)=1\quad\text{a.s.}
\]
%Any fixed point $t_0$   is not a point of jump of a Levy process with probability 1. So by Fubini's theorem for any $k\geq 1$ and for a.a. $t_0>0$ with respect to the Lebesgue measure we
%have a.s. convergence
%\[
%\Pb\left(\lim_{t\to t_0} U_k(t)=U_k(t_0)\right)=1.
%\]
This together with Lemmas \ref{lem: 633} and \ref{lem: 634} enables us to conclude that
\[
\Pb\left(\lim_{t\to t_0} X^{(\ve)}(t)=X(t_0)\right)=1
\]
for almost all $t_0>0$. Invoking Corollary \ref{cor:722} we infer
%робимо висновок, що з ймовірністю $1$
\bel{eq:741}
\lim_{t\to t_0} U^{(\ve)}_\alpha(t)
 = \lim_{t\to t_0} \left( X^{(\ve)}(t) - S_\theta^{(\ve)}(\vf^{(\ve)}(t))\right)=
 X(t_0) - S_\theta(\vf(t_0))\quad\text{a.s.}
\ee
for almost all $t_0>0$.

%Applying Corollary \ref{cor:722} we get the following
%convergence  with probability 1:
%\bel{eq:741}
%\lim_{t\to t_0} U^{(\ve)}_\alpha(t)
% = \lim_{t\to t_0} \left( X^{(\ve)}(t) - U_\eta^{(\ve)}(\vf^{(\ve)}(t))\right)=
% X(t_0) - U_\eta(\vf(t_0))
%\ee
%for a.e. $t_0>0.$

Denote the process $(X(t) - S_\theta(\vf(t)))$ by $U_\alpha$. The paths of this process are c\`{a}dl\`{a}g, for so are the paths of $X$
and $(S_\theta(\vf(t)))$. The process $U_\alpha$ is a symmetric $\alpha$-stable L\'{e}vy process as a limit of  symmetric $\alpha$-stable L\'{e}vy processes.
Hence, \eqref{eq:741} entails
\[
X(t) = U_\alpha(t)+ S_\theta(\vf(t)), \quad t\geq 0\quad\text{a.s.}
\]
 Since $(U^{(\ve)}_\alpha(t))$ is independent of $S_\theta$, so is $U_\alpha$.

%We denote the process $\{X(t) - U_\eta(\vf(t))\}$ by $\{U_\alpha(t)\}.$
%This process is cadlag because $\{X(t)\}$ and $\{U_\eta(\vf(t))\}$ have cadlag trajectories. So \eqref{eq:741} yields equality
%\[
%X(t) = U_\alpha(t)+ U_\eta(\vf(t))\ \mbox{ for all  }\  t\geq0
%\]
%with probability 1.

%The process $\{U_\alpha(t)\} $ is a symmetric $\alpha$-stable process, which is independent of $\{U_\eta(t)\}$, because the processes $\{U^{(\ve)}_\alpha(t)\}$ have this property.

\end{proof}

\begin{remk}
We have not proved that $\lim_{\ve \to 0} X^{(\ve)}=X$ a.s.\ on $D$. However, this has never been claimed. %We have not proved that $X^{(\ve)}\to X$ in $D([0,\infty))$ a.s., but we don't need this.
\end{remk}
\begin{remk}
%For any $T>0$ the process $\{U^{(\ve)}_\alpha(t)\}_{t\in[0,T]}$  is constructed
%by concatenation of finite (but random) number of parts of trajectories of independent
% $\alpha$-stable processes. However the process $\{U_\alpha(t)\}_{t\in[0,T]}$ uses a
%countable number of parts. We don't see a nice formula for $U_\alpha(t)$ except of the difference
%$X(t)-U_\eta(\vf(t))$ and warn  readers
% that the set $\{t\geq 0 : X(t)=0\} $ is uncountable; it   does not coincide with the countable
%set of ends of excursion intervals. Similar   situation of construction a
% noise as a difference of a Markov process and a generalized drift  is typical for construction of
%diffusions with  semipermeable membranes, see \cite{portenko1974Irregular, Portenko90},
%where at first a Feller  process
%is constructed via some semigroup technique, then a generalized
%drift is identified with an additive functional of this Markov process, and at the end it is
%verified that the difference between the Markov process and the generalized drift is a certain stochastic integral.
For each $T>0$, the process $(U^{(\ve)}_\alpha(t))_{t\in[0,T]}$ was obtained by merging finitely (but randomly) many fragments of paths of independent $\alpha$-stable L\'{e}vy processes. However, the process $(U_\alpha(t))_{t\in[0,T]}$ is built upon a countable number of paths. We cannot offer a good formula for $U_\alpha(t)$, other than $U_\alpha(t)=X(t)-S_\theta(\vf(t))$ and attract the reader attention to the fact that the set $\{t\geq 0 : X(t)=0\}$ is uncountable (the set does not coincide with a countable set of the endpoints of excursion intervals). A similar noise  representation, in terms of the difference of a Markov process and a generalized drift, is typical in the framework of diffusions with semipermeable membrane, see \cite{Portenko90,portenko1976generalized}. In this context the Markov process $Y$, say is first constructed via a semigroup technique. Then the generalized drift is identified with an additive functional of $Y$. Finally, it has to be checked that the difference of $Y$ and the generalized drift is a stochastic integral.
\end{remk}

%Levy measure $\eta$ of the process $U_\eta$ from equation\eqref{eq:SDE_Levy}  satisfies
% condition $\E^\eta (1-e^{-\sigma})=1$. Below we construct a process that
%  satisfies equation \eqref{eq:SDE_Levy}  without this assumption.

The L\'{e}vy measure $\theta$ of the process $S_\theta$ appearing in \eqref{eq:SDE_Levy} satisfies $\E^\theta (1-e^{-\sigma})=1$. Below we construct a process, similar to $S_\theta$, which satisfies \eqref{eq:SDE_Levy} when the latter equality does not necessarily holds.
 Denote by $(\rho_t)$, $\hat P_{\alpha}$ and $ N_{\alpha}$ the entrance law, the intensity of excursion measure and the corresponding Poisson point measure of a symmetric $\alpha$-stable L\'{e}vy process.
 Recall (see Lemma \ref{lem:360} and Remark \ref{remk:360}) that there exists a unique continuous entrance law with a zero sojourn at $0$, and that this entrance law corresponds to a symmetric $\alpha$-stable
 L\'{e}vy process. Let $\theta$ be a sigma-finite measure on $\mbR\setminus\{0\}$ satisfying $\E^\theta (1-e^{-\sigma})=1$.
Define the entrance law $\theta_t:=\theta P^0_t$, the characteristic measure $\bar P^\theta$ and the corresponding Poisson point measure $N$, which is independent of $N_{\alpha}$. For $p\in[0,1]$, define the entrance law $p\theta_t+(1-p)\rho_t$, the intensity of excursions measure $p \bar P^\theta+ (1-p)\hat P_{\alpha}$ and the Poisson point measure $N(p\, {\rm d}t, {\rm d}u)+ N_{\alpha}((1-p)\, {\rm d}t, {\rm d}u)$, see \eqref{eq:entrance-general_decomposition} and \eqref{eq:characteristic-general_decomposition}. Using these objects we now construct a Feller process $X$ with the help of  It\^{o}'s procedure. Since $(p \bar P^\theta+ (1-p) \hat P_{\alpha})(1-e^{-\sigma})=1$, the process $X$ has a zero sojourn at $0$.

%Using these object we construct a Feller process $X$ via It\^{o}'s construction.
%  Note that $(p \bar P^\eta+  q \hat P_{\alpha})(1-e^{-\sigma})=1,$ where $q=1-p,$ so the process
%  $X$   has zero sojourn at 0.

%  Denote by  $\{\theta_t\}, \hat P_{\alpha},$ and $ N_{\alpha}$ the entrance law, the
%intensity of excursion measure,
% and the corresponding Poisson point measure of a symmetric $\alpha$-stable process.
% Recall, see Lemma \ref{lem:360} and Remark  \ref{remk:360},
%%  (???? SSYLKA -- Lemma 1.1, i t.d.)
% that there is a unique continuous
% entrance with zero sojourn at 0, and this entrance corresponds to a symmetric $\alpha$-stable process.
%% $U_\alpha$.

%Let  $\eta$ be a sigma-finite measure on $\mbR\setminus\{0\}$
%  such that $\E^\eta (1-e^{-\sigma})=1$. Construct the entrance law $\eta_t:=\eta P^0_t,$
%the characteristic  measure $\bar P^\eta$, and the corresponding Poisson point measure $N$, which is
%independent of $ N_{\alpha}$.
%
%  For $p\in[0,1]$ consider the entrance law $p\eta_t+(1-p)\theta_t,$
%  the intensity measure $p \bar P^\eta+ (1-p)\hat P_{\alpha},$ and
%  the Poisson point measure $N(p\, dt, du)+ N_{\alpha}((1-p)\, dt, du)$,
%cf. \eqref{eq:entrance-general_decomposition}, \eqref{eq:characteristic-general_decomposition}.
%
%  Using these object we construct a Feller process $X$ via It\^{o}'s construction.
%  Note that $(p \bar P^\eta+  q \hat P_{\alpha})(1-e^{-\sigma})=1,$ where $q=1-p,$ so the process
%  $X$   has zero sojourn at 0.
\begin{thm}\label{thm:General_SDE_SKEW}
%The process $X$ is a (weak) solution to the SDE
%  \bel{eq:SDE_Levy_1}
% X(t) =  U_\alpha(t) +  U_\eta( p L^X_0(t)), t\geq 0,
%\ee
%where $U_{\alpha}$ is a symmetric $\alpha$-stable process,
%$U_\eta$ is a pure jump Levy process with the Levy measure $\eta,$
%$U_\alpha$ and $U_\eta$ are independent,
%the process $L^X_0$ is the local time of $X$ at 0.
For each $p\in [0,1]$, the process $X$ is a (weak) solution to the SDE
\bel{eq:SDE_Levy_1}
 X(t) =  U_\alpha(t) +  S_\theta( p L^X_0(t)),\quad t\geq 0,
\ee
where $U_\alpha$ is a symmetric $\alpha$-stable L\'{e}vy process, $S_\theta$ is a pure-jump L\'{e}vy process with the L\'{e}vy measure $\theta$, which is independent of $U_\alpha$, and the process $L^X_0$ is the local time of $X$ at $0$.
\end{thm}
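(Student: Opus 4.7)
The plan is to mimic the proof of Theorem \ref{thm:eq_Levy_1}, modified to accommodate the two-component excursion measure $p \bar P^\theta + (1-p)\hat P_\alpha$. The overarching picture is that continuous excursions (contributed by $N_\alpha$ at rate $1-p$) feed only into the $\alpha$-stable part $U_\alpha$, while each jump excursion (contributed by $N$ at rate $p$) splits into its initial jump $x_k$, which feeds into $S_\theta$, and its interior $\alpha$-stable trajectory $U_{\alpha,k}$, which feeds into $U_\alpha$. The factor $p$ in front of $L^X_0(t)$ in \eqref{eq:SDE_Levy_1} reflects that only a rate-$p$ fraction of the Poisson atoms (in local-time coordinate) contribute jumps to $S_\theta$: after rescaling the jump atom times from intensity $p\,ds\times\theta(dx)$ to intensity $ds\times\theta(dx)$, the cumulative jump process evaluated at $p L^X_0(t)$ coincides with the sum of initial values over jump atoms with $s$-time $\le L^X_0(t)$.

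First I would fix, as in Theorem \ref{thm:eq_Levy_1}, an atom-level parametrization: write the atoms of $N$ as $(s_k, x_k + U_{\alpha,k}(\cdot\wedge \sigma_k))$ with $(s_k, x_k)$ a Poisson point process on $[0,\infty)\times(\mbR\setminus\{0\})$ of intensity $p\,ds \times \theta(dx)$ and i.i.d.\ $\alpha$-stable copies $U_{\alpha,k}$; write the atoms of $N_\alpha$ as $(s'_j, v_j)$ with $v_j$ drawn from $\hat P_\alpha$. Embed $\{(s_k, x_k)\}$ into a rate $ds\times\theta(dx)$ Poisson measure $N_\theta$ via the rescaling $s\mapsto ps$, and let $S_\theta$ be the associated pure-jump L\'{e}vy process with L\'{e}vy measure $\theta$, so that $\sum_{s_k\le s} x_k = S_\theta(ps)$ path-by-path. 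Build $X$ from the combined Poisson measure by It\^{o}'s procedure; then $\tau(s)$ sums the durations of all jump and continuous excursions up to $s$-time, and its right-inverse $\vf(t) = L^X_0(t)$ is the Blumenthal--Getoor local time of $X$ at $0$.

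Next I would carry out the truncation argument of Theorem \ref{thm:eq_Levy_1}: retain only the jump-type atoms with $|x_k| > \ve$, of which there are finitely many in any bounded $s$-interval, but keep all continuous-type atoms untouched. Let $X^{(\ve)}$, $\tau^{(\ve)}$, $\vf^{(\ve)}$ denote the resulting objects, and set $S^{(\ve)}_\theta(s):=\int_{[0,s]}\int_{|x|>\ve} x\,N_\theta(dz,dx)$ as in \eqref{eq:beta_trunc_stb}. On each excursion interval of $X^{(\ve)}$ the interior trajectory is either $U_{\alpha,k}$ (on a retained jump excursion) or $v_j\sim\hat P_\alpha$ (on a continuous excursion); concatenating these along the $\tau^{(\ve)}$-intervals produces a process $U^{(\ve)}_\alpha$, independent of $S^{(\ve)}_\theta$ by the independence of $N$ and $N_\alpha$ and of the $U_{\alpha,k}$ from the initial values $x_k$. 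A direct bookkeeping of the jumps of $X^{(\ve)}$ then yields
\[
X^{(\ve)}(t) = U^{(\ve)}_\alpha(t) + S^{(\ve)}_\theta\!\bigl(p\,\vf^{(\ve)}(t)\bigr),\quad t\ge 0.
\]

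Finally I would send $\ve\to 0$ along the lines of Lemmas \ref{lem: 634}--\ref{lem: 636} and Corollary \ref{cor:722}: $\tau^{(\ve)}\to\tau$, $\vf^{(\ve)}\to\vf$ and $S^{(\ve)}_\theta\to S_\theta$ uniformly on compacts a.s., so for a.e.\ $t>0$ one has $S^{(\ve)}_\theta(p\vf^{(\ve)}(t))\to S_\theta(p\vf(t))$ and $X^{(\ve)}(t)\to X(t)$ a.s. The c\`{a}dl\`{a}g limit $U_\alpha:=X-S_\theta(p L^X_0(\cdot))$ is therefore a symmetric $\alpha$-stable L\'{e}vy process independent of $S_\theta$, and \eqref{eq:SDE_Levy_1} follows. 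The step I expect to be hardest is the identification of $U^{(\ve)}_\alpha$ as a genuine symmetric $\alpha$-stable L\'{e}vy process: unlike in Theorem \ref{thm:eq_Levy_1}, this concatenation mixes two kinds of $\alpha$-stable fragments (interiors of jump excursions and independent $\hat P_\alpha$-excursions), and the argument hinges on the uniqueness of the continuous entrance law for the $\alpha$-stable minimal semigroup (Remark \ref{remk:360}), which guarantees that \emph{any} Feller extension of $U_\alpha$-killed-at-$0$ with a zero sojourn at $0$ and excursion measure $\hat P_\alpha$ must coincide with $U_\alpha$.
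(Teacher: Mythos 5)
Your proposal follows essentially the same route as the paper: truncate the small jump excursions, read off the pathwise identity $X^{(\ve)}=U^{(\ve)}_\alpha+S^{(\ve)}_\theta(p\vf^{(\ve)})$, and pass to the limit via the analogues of Lemmas \ref{lem: 634}--\ref{lem: 636}, crucially keeping the factor $p$ in the argument of $S^{(\ve)}_\theta$ fixed (the paper's closing comment explains exactly why an $\ve$-dependent factor such as $p_\ve$ would break the composition limit, so you made the right choice). The only deviation is minor: you leave the continuous-excursion rate at $1-p$, whereas the paper boosts it to $q_\ve=\bigl(1-p\int_{|x|>\ve}\E^x(1-e^{-\sigma})\,\theta({\rm d}x)\bigr)/\hat P_\alpha(1-e^{-\sigma})\to 1-p$ so that the truncated excursion measure has total mass one and $X^{(\ve)}$ is itself a canonical zero-sojourn extension; either normalization works here, and you correctly single out the identification of the concatenated $U^{(\ve)}_\alpha$ as a symmetric $\alpha$-stable L\'{e}vy process (via the uniqueness of the continuous entrance law, Remark \ref{remk:360}) as the delicate step.
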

\begin{remk}
%The process $U_\alpha$ from \eqref{eq:SDE_Levy_1} is not equal to the  $\alpha$-stable
% process that is constructed via $N_{\alpha}$ only.
The process $U_\alpha$ appearing in \eqref{eq:SDE_Levy_1} is different from the $\alpha$-stable L\'{e}vy process which is built upon $N_{\alpha}$ alone.
\end{remk}
\begin{remk}
The process $S_\theta( p\cdot)$ has the same distribution as $S_{p\theta}(\cdot)$. As a consequence, $X$ is also a solution to
\[
X(t) =  U_\alpha(t) +  S_{p\theta}(L^X_0(t)),\quad t\geq 0.
\]
%We have equality of distributions of processes $\{ U_\eta( p\cdot)\}\overset{d}=
%\{ U_{p\eta}(\cdot)\}$. So $X$ is a solution of
%\[
%X(t) =  U_\alpha(t) +  U_{p\eta}(L^X_0(t)), t\geq 0.
%\]
\end{remk}

Theorem C follows from the last remark specialized to $\theta=\eta^\ast$ (so that $S_\theta=S_\beta$), where $\theta^\ast$ is the measure defined in \eqref{eq:Levy measure}.
%\begin{corl}
%Let $\eta$ be %the Levy measure of  the process  $U_\beta$
%from Corollary \ref{corl:SDE_skew_Levy}. Then the process $X$ is a weak solution to  the SDE
%\[
% X(t) =  U_\alpha(t) +  p^{1/\beta}U_\eta(L^X_0(t)), t\geq 0.
%\]
%\end{corl}
\begin{proof}[Proof of Theorem \ref{thm:General_SDE_SKEW}]
Define  the Poisson random measures $N^{(\ve)}_\eta$ and $ N^{(\ve)}$ via
\eqref{eq: trunc_measure_Poisson}.

%Define the Poisson point measures $N^{(\ve)}_\eta$ and $ N^{(\ve)}$ via \eqref{eq: trunc_measure_Poisson}.
 Put $\theta^{(\ve)}({\rm d}x):=\1_{\{|x|>\ve\}}\theta({\rm d}x)$, $\theta^{(\ve)}_t:=\theta^{(\ve)}P^0_t$ and
\[
q_\ve:=\frac{1-p  \int_{|x|>\ve} \E^x(1-e^{-\sigma}) \theta({\rm d}x)}{\hat P_\alpha(1-e^{-\sigma})}.
\]
Use now It\^{o}'s procedure to build the process $X^{(\ve)}$ upon the Poisson point measure $N^{(\ve)}(p\, {\rm d}t, {\rm d}u)+ N_{\alpha}(q_\ve\, {\rm d}t, {\rm d}u)$, which corresponds to the entrance law $p \theta^{(\ve)}_t+  q_\ve \rho_t$ and the characteristic measure $p\bar P^{\1_{\{|x|>\ve\}}\theta({\rm d}x)}+ q_\ve \hat P_\alpha$. It follows from the construction that $X^{(\ve)}$ has a zero sojourn at $0$, and that
\[
X^{(\ve)}(t) = U^{(\ve)}_\alpha(t)+ S_\theta^{(\ve)}(p\vf^{(\ve)}(t)),\quad t\geq 0,
\]
where the process $S_\theta^{(\ve)}$ is defined in \eqref{eq:beta_trunc_stb}. The remainder of the proof mimics that of Theorem \ref{thm:eq_Levy_1}.

We proceed with a comment.

%Set
%
%$\eta^{(\ve)}(dx):=\1_{|x|>\ve}\eta(dx), \eta^{(\ve)}_t:=\eta^{(\ve)}P^0_t$,
%
%\[
%q_\ve:=\frac{1-p  \int_{|x|>\ve} \E^x(1-e^{-\sigma}) \eta(dx)}{\hat P_\alpha(1-e^{-\sigma})}.
%\]
%Construct  a process $X^{(\ve)}$ by It\^{o}'s procedure
%using
%the Poisson point measure $N^{(\ve)}(p\, dt, du)+ N_{\alpha}(q_\ve\, dt, du)$,
%which corresponds to the entrance law $p \eta^{(\ve)}_t+  q_\ve \theta_t$ and
%characteristic measure
%$p    \bar P^{\1_{|x|>\ve}\eta(dx)}+ q_\ve \hat P_\alpha$.
%
%It is  seen from the construction that $X^{(\ve)}$ has no sojourn at 0 and
%\[
%X^{(\ve)}(t) = U^{(\ve)}_\alpha(t)+ U_\eta^{(\ve)}(p\vf^{(\ve)}(t)),
%\]
% where the process $U_\eta^{(\ve)}$ is defined in \eqref{eq:beta_trunc_stb}.
%
%The rest of the proof is the same as in
%Theorem \ref{thm:eq_Levy_1}.
%We make the only remark.
Assume that we have constructed the process $\tilde X^{(\ve)}$ with the help of the Poisson point measure $N^{(\ve)}(p_\ve\, {\rm d}t, {\rm d}u)+ N_{\alpha}(q {\rm d}t, {\rm d}u)$, where
\[
p_\ve=\frac{p \int_{\mbR} \E^x(1-e^{-\sigma}) \theta({\rm d}x)  }{\int_{|x|>\ve} \E^x(1-e^{-\sigma}) \theta({\rm d}x)}.
\]
Then
\[
\tilde X^{(\ve)}(t) = \tilde  U^{(\ve)}_\alpha(t)+ S_\theta^{(\ve)}(p_\ve \tilde \vf^{(\ve)}(t)),\quad t\geq 0.
\]
It is likely that the limit relation $\lim_{\ve\to 0} S_\theta^{(\ve)}(p_\ve \tilde \vf^{(\ve)}(t))= S_\theta (p \vf (t))$ a.s.\ also holds. However, this fact does not follow directly from
Lemmas \ref{lem: 634} and \ref{lem: 636}.
 Indeed, the composition functional is not continuous on
 $D\times C_\uparrow([0,\infty))$, where $C_\uparrow([0,\infty))$ is the set of nondecreasing continuous functions, see \cite[Section 13.2]{Whitt}. The composition $(S_\theta,  \vf)\mapsto S_\theta \circ \vf$ would be a.s.\ continuous, if the function $\vf$ were a.s.\ strictly increasing, which is not the case. The local time $\vf$ has many intervals of constancy, which are directly connected to the points of growth of $S_\theta$.
%If we  construct the process $\tilde X^{(\ve)}$ via
%the Poisson point measure $N^{(\ve)}(p_\ve\, dt, du)+ N_{\alpha}(q  dt, du)$, where
%\[
%p_\ve=\frac{p \int_{\mbR} \E^x(1-e^{-\sigma}) \eta(dx)  }{\int_{|x|>\ve} \E^x(1-e^{-\sigma}) \eta(dx)},
%\]
%then we have a representation
%\[
%\tilde X^{(\ve)}(t) = \tilde  U^{(\ve)}_\alpha(t)+ U_\eta^{(\ve)}(p_\ve \tilde \vf^{(\ve)}(t)).
%\]
%Probably convergence $\lim_{\ve\to0}U_\eta^{(\ve)}(p_\ve \tilde \vf^{(\ve)}(t))= U_\eta (p \vf (t))$
%also holds true. However it is not a direct corollary of Lemmas \ref{lem: 634}
%and \ref{lem: 636} because a composition is not continuous on $\De\times
%C_\uparrow([0,\infty)),$ where $C_\uparrow([0,\infty))$ is a set of non-decreasing continuous functions, see \cite[\S 13.2]{Whitt}. It would be the case if $\vf $ were strictly increasing function, but this is not
%the case. The local time $\vf$ has a lot of intervals of constancy and these intervals have a strong  connection with points of growth of $U_\eta.$
\end{proof}

\subsection{Uniqueness for SDEs with local times for the skew Levy  process}\label{sec:Uniqueness}

%Let $\eta$~--~ сигма-скінченна міра на $\mbR\setminus\{0\}$ така, що
%$\E^\eta (1-e^{-\sigma})=1$. Визначимо чисто стрибковий процес Леві з обмеженою варіацією $U_\eta$, як у попередньому розділі.

%Let  $\eta$ be a sigma-finite measure on $\mbR\setminus\{0\}$
%  such that $\E^\eta (1-e^{-\sigma})=1.$ Define the pure jump Levy
% process of bounded variation $U_\eta$, see previous section.

\begin{thm}\label{thm:General_SDE_SKEW_uniqueness}
%Assume that there exists a filtered probability space $(\Omega, (\cF_t), \cF, \Pb),$ a $(\cF_t)$-adapted symmetric $\alpha$-stable process $U_\alpha$ with $\alpha\in (1,2)$
%and a homogeneous Feller process $(X, \cF_t)$ with zero sojourn at 0 such that
%\[
% X(t) =  U_\alpha(t) +  H(L^X_0(t)), t\geq 0,
%\]
%where $\{H(t)\}$ is a Levy process of bounded variation that is independent of $U_\alpha.$
%
%Then there exists $p\in[0,1]$ and a measure $\eta $ such that $\E^\eta (1-e^{-\sigma})=1$,  $\{H(t)\}\overset{d}= \{U_\eta(t)\}$ and the distribution
%of $X$ coincides with the one constructed in Theorem \ref{thm:General_SDE_SKEW}.
Assume that there exists a filtered probability space $(\Omega, (\cF_t), \cF, \Pb),$ a $(\cF_t)$-adapted symmetric $\alpha$-stable process $U_\alpha$ with $\alpha\in (1,2)$ and a homogeneous Feller process $(X, (\cF_t))$ with a zero sojourn at $0$ which satisfy
\[
 X(t) =  U_\alpha(t) +  H(L^X_0(t)),\quad t\geq 0,
\]
where $H$ is a L\'{e}vy process of bounded variation, which is independent of $U_\alpha$. Then there exist $p\in[0,1]$ and a sigma-finite measure $\theta$ on $\mbR\setminus\{0\}$ satisfying $\E^\theta (1-e^{-\sigma})=1$, for which the processes $(X,H)$ has the same distribution as the processes $(X(\cdot),S_\theta(p\cdot))$ appearing %in Theorem \eqref{sec:Existence}, and the process $X$ has the same distribution as the processes $X$ appearing
in Theorem \ref{thm:General_SDE_SKEW}.
\end{thm}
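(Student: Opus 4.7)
The plan is to recover the parameters $(p,\theta)$ from the characteristic measure of $X$ via excursion theory, and then match $H$ to the Lévy process $S_\theta(p\,\cdot)$ of Theorem \ref{thm:General_SDE_SKEW} by inspecting the jump-entrance excursions.

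\textbf{Step 1 (Identification of $p$ and $\theta$ from $X$).} First I would show that $X$ is a recurrent Feller extension of $U_\alpha$ killed at $0$. Since $L^X_0$ is a continuous additive functional that grows only on $\{t:X(t)=0\}$, the process $H\circ L^X_0$ is constant on every interval on which $X$ avoids $0$; hence by the SDE, $X$ has the same increments as $U_\alpha$ between consecutive visits to $0$. This identifies the minimal semigroup of $X$ with $P^0_t$. By Remark \ref{remk:360} the continuous entrance law $\rho_t$ of $U_\alpha$ is unique, so the general decomposition \eqref{eq:entrance-general_decomposition}--\eqref{eq:characteristic-general_decomposition} applied to the zero-sojourn extension $X$ gives a unique $p\in[0,1]$ and $\sigma$-finite $\theta$ on $\mbR\setminus\{0\}$ with $\E^\theta(1-e^{-\sigma})=1$ such that the characteristic measure of $X$ equals $(1-p)\hat P^c+p\bar P^{\theta}$. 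These will be the $(p,\theta)$ claimed in the statement.

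\textbf{Step 2 (Matching $H$ with $S_\theta(p\,\cdot)$).} Next I would reconstruct $H$ from the jump-entrance excursions of $X$. Introduce the right-continuous inverse $\tau_L(s):=\inf\{t:L^X_0(t)>s\}$, which is an $(\cF_{\tau_L(s)})$-adapted subordinator. A standard fact of excursion theory (together with Step 1) says that the point process on $[0,\infty)\times(\mbR\setminus\{0\})$ formed by the pairs $\bigl(s,\,X(\tau_L(s-))\bigr)$ arising from jump-entrance excursions is Poisson with intensity $\mathrm{d}s\times p\,\theta(\mathrm{d}x)$. On the other hand, at such an $s$ one has $L^X_0(\tau_L(s-))=s$ by continuity of $L^X_0$, so the SDE yields
\[
X(\tau_L(s-))-X(\tau_L(s-)-)=\bigl(U_\alpha(\tau_L(s-))-U_\alpha(\tau_L(s-)-)\bigr)+\bigl(H(s)-H(s-)\bigr).
\]
Since $U_\alpha$ is independent of $H$ and its countable set of jump times a.s.\ avoids the countable random set $\{\tau_L(s-):\Delta H(s)\neq 0\}$ (Fubini combined with the fact that a fixed time is a.s.\ not a jump time of a Lévy process), the first parenthesis vanishes on this set. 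Hence the Poisson point process $\{(s,\Delta H(s)):\Delta H(s)\ne 0\}$ coincides with the one above, giving that the jump part of $H$ is a Lévy process with Lévy measure $p\theta$, equivalently equal in law to $S_\theta(p\,\cdot)$.

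\textbf{Step 3 (Ruling out a drift in $H$, and conclusion).} It remains to show that the drift of $H$ vanishes. Writing $H(s)=bs+J(s)$ with $J$ the pure-jump part, the law of $X$ is already determined by Step 1 (via its characteristic measure and the Feller property), and by Theorem \ref{thm:General_SDE_SKEW} a process $X'$ with that same law is realised as $X'=U'_\alpha+S_\theta(pL^{X'}_0)$ for an independent pair $(U'_\alpha,S_\theta)$, in which $S_\theta$ has no drift. Transferring through the equality in law $X\stackrel{\mathrm{d}}{=}X'$, the continuous part $bL^X_0$ of $X-U_\alpha$ must match the (vanishing) continuous part of $X'-U'_\alpha$ as an additive functional of $X$; since $L^X_0$ is non-trivial, this forces $b=0$. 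Combining Steps 2 and 3 with the independence of $H$ and $U_\alpha$ (which determines the joint law of $(X,H)$ from the SDE) yields $(X,H)\stackrel{\mathrm{d}}{=}(X,S_\theta(p\,\cdot))$ as required.

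\textbf{Main obstacle.} The delicate part is Step 2, namely the simultaneous bookkeeping of jumps of $U_\alpha$, $H\circ L^X_0$ and $X$ at the jump-entrance times. One must carefully argue that the countable random set of excursion-start times is a.s.\ disjoint from the jump set of $U_\alpha$ (exploiting independence and Fubini), and verify that $\tau_L(s-)$-times produce a genuine Poisson enumeration of the excursion starts with intensity $p\theta$; together with the subtle identification of $L^X_0(\tau_L(s-))=s$ these give the matching of Poisson point measures that drives the whole argument.
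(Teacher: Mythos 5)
Your Step 1 coincides with the paper's, but for the identification of $H$ the paper takes a different route, and the route you propose has two genuine gaps. First, in Step 2 you deduce $\Delta U_\alpha(\tau_L(s-))=0$ from ``independence of $U_\alpha$ and $H$ plus Fubini''. This does not work: the excursion-entrance times $\tau_L(s-)$ are built from the zero set of $X=U_\alpha+H(L^X_0)$ and are therefore strongly dependent on $U_\alpha$ (the length of each excursion is a hitting time of $0$ by an increment of $U_\alpha$), so they cannot be treated as fixed times at which $U_\alpha$ a.s.\ does not jump; two dependent countable random sets can well intersect. A correct argument would have to use, e.g., quasi-left-continuity of $U_\alpha$ at the announced (predictable) times $\tau_L(s-)$, and you would also need the converse direction (that every jump-entrance excursion produces a jump of $H$, i.e.\ that $U_\alpha$ does not itself supply the entrance jump), which faces the same obstruction. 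Second, in Step 3 you ``transfer through the equality in law $X\stackrel{\mathrm{d}}{=}X'$'' to get $b=0$; but equality in law of $X$ alone says nothing about $X-U_\alpha$ unless you first show that $U_\alpha$ is a measurable functional of $X$. That functional-recovery step is precisely the paper's key device and is absent from your proposal.

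The paper's proof runs as follows: for $\delta>0$ the measure ${\rm d}\bigl(H(L^X_0(s))\bigr)$ does not charge $\{s:|X(s-)|\geq\delta\}$ (since $L^X_0$ is constant on such stretches), whence
$\int_{[0,t]}\1_{\{|X(s-)|\geq\delta\}}{\rm d}X(s)=\int_{[0,t]}\1_{\{|X(s-)|\geq\delta\}}{\rm d}U_\alpha(s)$;
letting $\delta\to0+$ yields $U_\alpha=F(X)$ for a measurable $F:D\to D$ that does not depend on the particular solution, and likewise $L^X_0=G(X)$. Applying the same $F$ and $G$ to the explicit solution $\wt X=\wt U_\alpha+\wt S_\theta(pL^{\wt X}_0)$ furnished by Theorem \ref{thm:General_SDE_SKEW} (which has the same law as $X$ by your Step 1) gives $(H(L^X_0),L^X_0)\stackrel{\mathrm{d}}{=}(\wt S_\theta(pL^{\wt X}_0),L^{\wt X}_0)$ and hence $H\stackrel{\mathrm{d}}{=}S_\theta(p\,\cdot)$, identifying drift and jump part in one stroke. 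Even if you wish to keep your excursion-based identification of the jump part, you still need this functional-recovery lemma (or an equivalent quasi-left-continuity argument) to close Steps 2 and 3.
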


Theorem D is a specialization of Theorem \ref{thm:General_SDE_SKEW_uniqueness}, with the L\'{e}vy measure $\theta=\eta$ given by \eqref{eq:Levy measure} in which case $S_\theta=S_\beta$.
\begin{remk}
%Under assumptions on $(X, U_\alpha)$ there is neither a solution to
%\[
%dX(t) = d U_\alpha(t) + \gamma dL^X_0(t),
%\]
%with $\gamma\neq0$, nor a solution to \eqref{eq:SDE_Levy_1} with $p>1.$
Under the assumptions of Theorem \ref{thm:General_SDE_SKEW_uniqueness} imposed on $(X, U_\alpha)$ there exists neither a solution to the equation
\[
{\rm d}X(t) = {\rm d} U_\alpha(t) + \gamma {\rm d}L^X_0(t),\quad t\geq 0
\]
for $\gamma\neq0$, nor a solution to \eqref{eq:SDE_Levy_1} for $p>1$.
\end{remk}
\begin{proof}
Since the local time only grows on the set $\{t\geq 0\ : \ X(t)=0\}$, the process $X$ is a recurrent extension of a symmetric $\alpha$-stable L\'{e}vy process killed at $0$. Hence, there exists a unique value of $p\in[0,1]$ and a unique measure $\theta$ satisfying $\E^\theta (1-e^{-\sigma})=1$, for which the entrance law of $X$ is given by $(1-p) \rho_t+ p \, \theta P^0_t$, see \eqref{eq:entrance-general_decomposition} and \eqref{eq:characteristic-general_decomposition}.
%Since the local time grows only on the set  $\{t\geq 0\ : \ X(t)=0\}$, the process $X$ is a recurrent extension of the $\alpha$-stable
%process killed at 0. So, there   exists a unique   $ p\in[0,1]$ and a unique measure $ \eta,\
%\E^{ \eta} (1-e^{-\sigma})=1,$ such that
%the entrance law of $X$ equals $(1-p) \theta_t+ p \, \eta P^0_t,$ see \eqref{eq:entrance-general_decomposition}, \eqref{eq:characteristic-general_decomposition}.
It follows from Theorem \ref{thm:General_SDE_SKEW} that there exists a version $\wt X$ of $X$ satisfying
\[
\wt X(t) =  \wt U_\alpha(t) +  \wt S_\theta (p L^{\wt X}_0(t)),\quad t\geq 0.
\]
Observe that the processes $X$ and $\wt X$ are semimartingales. Since the local times $(L^X_0(t))$ and $(L^{\wt X}_0(t))$ are a.s.\ continuous processes which do not increase a.s.\ on the sets $\{s\geq 0\ :\ X(s)\neq 0\}$ and $\{s\geq 0\ :\ \wt X(s)\neq 0\}$, respectively, we conclude that, for any $\delta>0$, with probability $1$,
\[
\int_{[0,\,t]}\1_{\{|\wt X(s-)|\geq \delta\}} {\rm d}  (\wt S_\theta( p L^{\wt X}_0(s)))
= 0,\quad t>0
\]
and
\[
\int_{[0,\,t]}\1_{\{|X(s-)|\geq \delta\}}{\rm d} (  H(L^X_0(s)))=0,\quad t\geq 0.
\]
%It follows from Theorem \ref{thm:General_SDE_SKEW} that there is a copy $\wt X$ of $X$    such that
%\[
%\wt X(t) =  \wt U_\alpha(t) +  \wt U_{ \eta}(p L^{\wt X}_0(t)), t\geq 0.
%\]
%Observe that the processes $X$ and $\wt X$ are  semimartingales.
%Since the local times $\{L^X_0(t)\}$ and $\{L^{\wt X}_0(t)\}$ are
%continuous processes that do  not grow on the sets $\{s\geq 0\ :\ X(s)\neq 0\}$
%and $\{s\geq 0\ :\ \wt X(s)\neq 0\}$ respectively, we have with probability 1:
%\[
%\int_0^t  \1_{|\wt X(s-)|\geq \delta} d  (\wt U_\eta( p L^{\wt X}_0(s)))
%= 0,
%\]
%\[
%\int_0^t  \1_{|X(s-)|\geq \delta} d  (  H(L^X_0(s)))=0, \ t\geq 0.
%\]
This entails
$$\int_{[0,\,t]}\1_{\{|X(s-)|\geq \delta\}} {\rm d} X(s)=\int_{[0,\,t]} \1_{\{|X(s-)|\geq \delta\}} {\rm d}  U_\alpha(s)
$$
and
\[
\int_{[0,\,t]} \1_{\{|\wt X(s-)|\geq \delta\}} {\rm d}\wt X(s)=\int_{[0,\,t]}\1_{\{|X(s-)|\geq \delta\}} {\rm d}\wt U_\alpha(s).
\]
Sending $\delta\to0+$ we conclude that there exists a measurable function $F:D\to D$ satisfying
$\wt U_\alpha=F(\wt X)$ and $U_\alpha=F(X)$ a.s. Since the process $X$ has the same distribution as $\wt X$, there exists a measurable function $G:D\to D$ satisfying $\wt L^{\wt X}_0=G(\wt X)$ and $L^{X}_0=G(X)$, see \cite[Section III.3]{Blumenthal2012excursions}. As a consequence, the distributions of the pairs $(H(L^{\wt X}_0), L^{\wt X}_0)$ and $(  S_\theta( p L^{ X}_0), L^{ X}_0)$ are the same. This, in its turn, ensures that the distributions of the L\'{e}vy processes $(H(t))$ and $(S_\theta( p t))$ are the same.
%\[
%\int_0^t  \1_{|X(s-)|\geq \delta} d  X(s)
%=\int_0^t  \1_{|X(s-)|\geq \delta} d  U_\alpha(s),
%\]
%\[
%\int_0^t  \1_{|\wt X(s-)|\geq \delta} d  \wt X(s)=\int_0^t  \1_{|X(s-)|\geq \delta} d  \wt U_\alpha(s).
%\]
%Passing $\delta\to0+$ we obtain that the there is the same measurable function $F:\De\to\De$
%such that $\wt U_\alpha=F(\wt X)$ and  $U_\alpha=F(X)$  with probability 1.
%
%Since  $X\overset{d}=\wt X$, there is a measurable function
% $G:\De\to\De$ such that $\wt L^{\wt X}_0=G(\wt X)$ and  $L^{X}_0=G(X)$, see \cite[\S III.3]{Blumenthal2012excursions}.
%
% Hence the distribution of pairs $(H(L^{\wt X}_0), L^{\wt X}_0)$ and
% $(  U_\eta( p L^{\wt X}_0), L^{ X}_0)$  are equal. This yields equality of distributions
% of Levy processes $\{H(t)\}$ and $\{U_\eta(  p t)\}$.
\end{proof}
{\bf Acknowledgement.}
The present work was supported by the National Research Foundation of Ukraine (project 2020.02/0014 `Asymptotic regimes of perturbed random walks: on the edge of modern and classical probability'). The authors are very grateful to M.~I. Portenko for numerous helpful discussions. % of this work.

\bibliographystyle{plain}

\end{document}